\documentclass[11pt]{article}

\usepackage{amsfonts,amsthm,amssymb,amsmath}
\usepackage{graphicx,color, mathrsfs}
\usepackage[colorlinks]{hyperref}
\usepackage[top=1in,bottom=1in,left=1in,right=1in]{geometry}

\linespread{1.3}
\setlength{\parskip}{3.6 pt}

\numberwithin{equation}{section}
\newtheorem{defn}{Definition}[section]
\newtheorem{thm}{Theorem}[section]
\newtheorem{lem}{Lemma}[section]

\newtheorem{rem}{Remark}[section]

\newcommand{\R}{\mathbb{R}} %real numbers
\newcommand{\Z}{\mathbb{Z}} %integer numbers
\newcommand{\N}{\mathbb{N}} %Natural numbers
\newcommand{\M}{\mathbf{M}} % set of finite measures
\newcommand{\0}{\mathbf{0}} % zero measures
\newcommand{\D}{\mathbf{D}} %space of cadlag functions

 %probability
\newcommand{\prr}[1]{\mathbb{P}^n\Big({#1}\Big)} %probability
%expectation
\newcommand{\dto}{\Rightarrow} % weak convergence
 % weak convergence

\newcommand{\fl}[1]{\lfloor{#1}\rfloor} % floor of a number
\newcommand{\cl}[1]{\lceil{#1}\rceil} % ceiling of a number
 %Norm
 %index function

\newcommand{\rcll}{{\it c\`adl\`ag}} %Right continuous with left limits

\newcommand{\pov}{\mathbf{d}} %Porohkov metric
\newcommand{\inn}[2]{\langle{#1},{#2}\rangle} %inner product
\newcommand{\sko}{\varrho} %Skorohod metric
\newcommand{\gd}{\pi}

\newcommand{\ggng}{$G/GI/n$+$GI$}
\newcommand{\ser}{\mathcal{Z}} % server descriptor
\newcommand{\rbuf}{\mathcal{R}} % buffer descriptor

\newcommand{\fs}[1]{\bar{#1}^n} %fluid scaling
 %diffusion
\newcommand{\osc}[3]{\mathbf w_{#3}({#1}(\cdot),{#2})} %modular of continuity

\newcommand{\fm}{(\lambda,F,G)} %fluid model

\newcommand{\gc}{\mathcal{L}} % Glevenko-Cantelli
 % joint measure
\newcommand{\testfn}{\mathscr{V}} %set of test functions
 %set of test set

\title{Fluid Models of Many-server Queues with Abandonment}
\date{\today}
\author{Jiheng Zhang\\
  \quad\\
  Department of Industrial Engineering and Logistic Management\\
  The Hong Kong University of Science and Technology\\
  \quad\\
  jiheng@ust.hk}

\begin{document}
\maketitle

\begin{abstract}
  We study many-server queues with abandonment in which customers have
  general service and patience time distributions.  The dynamics of
  the system are modeled using measure-valued processes, to keep track
  of the residual service and patience times of each customer.
  Deterministic fluid models are established to provide first-order
  approximation for this model.  The fluid model solution, which is
  proved to uniquely exists, serves as the fluid limit of the
  many-server queue, as the number of servers becomes large.  Based on
  the fluid model solution, first-order approximations for various
  performance quantities are proposed.
\end{abstract}

\emph{Key words and phrases:} many-server queue, abandonment, measure
valued process, quality driven, efficiency driven, quality and
efficiency driven.

\section{Introduction}

% --------------------- many-server queue ---------------------------
% why general, why abandonment, why dependency

Recently, there has been a great interest in queues with a large
number of servers, motivated by applications to telephone call
centers.  Since a customer can easily hang up after waiting for too
long, abandonment is a non-negligible aspect in the study of
many-server queues. In our study, a customer can leave the system
(without getting service) once has been waiting in queue for more than
his patience time. Both patience and service times are modeled using
random variables.  A recent statistical study by Brown et al.\
\cite{BGMSSZZ2005} suggests that the exponential assumption on service
time distribution, in many cases, is not valid.  In fact, the
distribution of service times at call centers may be log-normal in
some cases as shown in \cite{BGMSSZZ2005}.  This emphasizes the need
to look at the many-server model with generally distributed service
and patience times.

In this paper, we study many-server queues with general patience and
service times.  The queueing model is denoted by \ggng{}.  The $G$
represents a general stationary arrival process.  The first $GI$
indicates that service times come from a sequences of independent and
identically distributed (IID) random variables with a general
distribution.  The $n$ denotes the number of homogeneous servers.
There is an unlimited waiting space, called buffer, where customers
wait and can choose to abandon if their patience times expires before
their service starts.  Again, the patience times of each customer are
IID and with a general distribution (the $GI$ after the `+' sign).

% --------------------- fluid model, why ---------------------------

Useful insights can be obtained by considering a many-server queue in
limit regimes where the number $n$ of servers increases along with the
arrival rate $\lambda^n$ such that the traffic intensity
\[\rho^n=\frac{\lambda^n}{n\mu}\to \rho\textrm{ as }n\to\infty,\]
where $\mu$ is the service rate of a single server (in other words,
the reciprocal of the mean service time), and $\rho\in[0,\infty)$.
Since the abandonment ensures stability, the limit $\rho$ in the above
need not to be less than 1.  In fact, according to $\rho$, the limit
regimes can be divided into three classes,
i.e. \emph{Efficiency-Driven} (ED) regime when $\rho>1$,
\emph{Quality-and-Efficiency-Driven} (QED) regime when $\rho=1$ and
\emph{Quality-Driven} (QD) regime when $\rho<1$. The QED regime is
also called \emph{Halfin-Whitt} regime due to the seminal work Halfin
and Whitt \cite{HalfinWhitt1981}.  With this motivation, we establish
the fluid (also called law of large number) limit for the \ggng{}
queue in all the ED, QED and QD limit regimes.

We show that the fluid model has an equilibrium, which yields
approximations for various performance quantities.  These fluid
approximations work pretty well in the ED and QD regime where $\rho$
is not that close to 1, as demonstrated in the numerical experiments
of Whitt \cite{Whitt2006}.  However, when $\rho$ is very close (say
within 5\%) to 1, the fluid approximations lose their accuracy and we
shall look at a more refined limit, the diffusion limit, in this case.
Diffusion limit is not within the scope of the current paper.

One of the challenges in studying many-server queue with general
service (as well as patience) time is that Markovian analysis can not
be used.  In a system where multiple customers are processed at the
same time, such as the many-server queue, how to describe the system
becomes an important issue. The number of customers in the system does
not give much information since they may all have large remaining
service times or all have small remaining service times, and this
information can affect future evolution of the system.  We choose
finite Borel measures on $(0,\infty)$ to describe the system.  At any
time $t\ge 0$, instead of recording the total number of customers in
service (i.e. the number of busy servers), we record all the remaining
patience times using measure $\ser(t)$. For any Borel set $C\in
(0,\infty)$, $\ser(t)(C)$ indicates the number of customers in server
with \emph{remaining service time} belongs to $C$ at that time.
Similar idea applies for the remaining patience times.  We first
introduce the \emph{virtual buffer}, which holds all the customers who
have arrived but not yet scheduled to receive service (assuming they
are infinitely patient).  We record all the remaining patience times
for those in the virtual buffer using finite Borel measure $\rbuf(t)$
on $\R=(-\infty,\infty)$.  At time $t\ge 0$, $\rbuf(t)(C)$ indicates
the number of customers in the virtual buffer with \emph{remaining
  patience time} belongs to the Borel set $C$.  The descriptor
$(\rbuf(\cdot),\ser(\cdot))$ contains very rich information, almost
all information about the system can be recovered from it.  Note that
a customer with negative remaining patience time has already
abandoned. So the actual number of customers in the buffer is
\[
Q(t)=\rbuf(t)((0,\infty))\textrm{ for all }t\ge0.
\]
More details will be discussed when we rigorously introduce the
mathematical model in Section~\ref{sec:stochastic-model}.  In
literature, another descriptor that keeps track of the ages of
customers in service and the ages of customers in waiting have been
used, e.g.\ \cite{KangRamanan2008,Whitt2006}; The age proceses have
the advantage of being observable, without requiring future
information, though their analysis is often more complicated. Both age
and residul descriptions of the system often results in the same
steady state insights. In this paper, we focus on residual processes
only.

% -------------- literature --------------
% -----mvp-----
The framework of using measure-valued process has been successfully
applied to study models where multiple customers are processed at the
same time.  Existing works include Gromoll and Kruk
\cite{GromollKurk2007}, Gromoll, Puha and Williams \cite{GPW2002} and
Gromoll, Robert and Zwart \cite{GRZ2008}, to name a few.  Most of
these works are on the processor sharing queue and related models
where there is no waiting buffer.  Recently, Zhang, Dai and Zwart
\cite{ZDZ2009,ZDZ2007b} apply the measure-valued process to study the
limited processor sharing queue, where only limited number of
customers can be served at any time with extra customers waiting in a
buffer.  Many techniques in this paper closely follows from those
developed in \cite{ZDZ2009}.
% -----many server queue----
There has been a huge literature on many-server queue and related
models since the seminal work by Halfin and Whitt
\cite{HalfinWhitt1981}. But there are not many successes with the case
where the service time distribution is allowed to be
non-exponential. One exception is the work of Reed~\cite{Reed2007}, in
which fluid and diffusion limits of the customer-count process of many
server queues (without abandonment) are established where few
assumptions beyond a first moment are placed on the service time
distribution. Later, Puhalskii and Reed~\cite{PuhalskiiReed2009}
extend the aforementioned results to allow noncritical loading,
generally distributed service times, and general initial conditions.
Jelenkovi\'{c} et al.\ \cite{JMM2004} study the many-server queue with
deterministic service times; Garmarnik and Mom\v{c}ilovi\'{c}
\cite{GamarnikMomvcilovic2007} study the model with lattice-valued service
times; Puhalskii and Reiman \cite{PuhalskiiReiman2000} study the model
with phase-type service time distributions.  Mandelbaum and
Mom\v{c}ilovi\'{c} \cite{MandelbaumMomvcilovic2008} study the virtual
waiting time processes, and Kaspi and Ramanan \cite{KaspiRamanan}
study the fluid limit of measure-valued processes for many-server
queues with general service times.
% -----many server queue w/ abandonment----
For the many-server queue with abandonment, a version of the fluid
model have been established as a conjecture in Whitt~\cite{Whitt2004},
where a lot of insight was demonstrated, which help greatly in our
work. Recently, Kang and Ramanan also worked on the same topic and
summarized their result in the technical report
\cite{KangRamanan2008}.  
% % ---------about KangRamanan2008 ------------
Although we focus on the same topic, our work uses different
methodology from that in \cite{KangRamanan2008} and requires less
assumptions on the service time distribution.  In our work, the only
assumption on the service time distribution is continuity, while the
service time distribution in \cite{KangRamanan2008} is required to
have a density and the hazard rate function must be either bounded or
lower lower-semicontinuous.  From the modeling aspect, our approach
mainly based on tracking the ``residual'' processes, while
\cite{KangRamanan2008} tracks the ``age'' processes for studying the
queueing model.  Also, we propose a quite simple fluid model, which
facilitates the analysis.  The existence of solution to the fluid
model in \cite{KangRamanan2008} is proved by showing each fluid limit
satisfies the fluid model equations.  The current paper proves the
existence directly from the definition of the fluid model without
invoking fluid limits.
% % ---------about KangRamanan2008 ------------
In addition, we verify in the end of this paper (c.f.\
Section~\ref{sec:special-case-with}) that our fluid model is
consistent with the special case where both service and patience times
are exponentially distributed, as established in
Whitt~\cite{Whitt2004} for the ED regime, Garnet et al.\
\cite{GarnetMandelbaumReiman2002} for QED regime and Pang and Whitt
\cite{PangWhitt2008a} and Puhalskii \cite{Puhalskii2008} for all three
regimes.  Additional works on many-server queues with abandonment
includes Dai, He and Tezcan \cite{DaiHeTezcan2009} for phase-type
service time distributions and exponential patience time distribution;
Zeltyn and Mandelbaum \cite{ZeltynMandelbaum2005} for exponential
service time distribution and general patience time distributions;
Mandelbaum and Mom\v{c}ilovi\'{c} \cite{MandelbaumMomvcilovic2009} for
both general service time distribution and general patience time
distribution. The difference between our work and
\cite{MandelbaumMomvcilovic2009} is that we study the fluid limit of
measure-valued processes in all three regimes, and
\cite{MandelbaumMomvcilovic2009} studies the diffusion limit of
customer count processes and virtual waiting processes in the QED
regime.  By assuming a convenient initial condition,
\cite{MandelbaumMomvcilovic2009} does not require a detailed fluid
model analysis.

% -------------- organization --------------
The paper is organized as follows: We begin in
Section~\ref{sec:stochastic-model} by formulating the mathematical
model of the \ggng{} queue. The dynamics of the system are clearly
described by modeling with measure-valued processes; see
\eqref{eq:stoc-dyn-eqn-B} and \eqref{eq:stoc-dyn-eqn-S}. The main
results, including a characterization of the fluid model and the
convergence of the stochastic processes underlying the \ggng{} queue
to the fluid model solution are stated in
Section~\ref{sec:main-results}.  In
Section~\ref{sec:prop-fluid-model}, we explore the fluid model and
give proofs of all the results on the fluid model.
Section~\ref{sec:fluid-appr-stoch} is devoted to establishing the
convergence of stochastic processes, which includes the proof of
pre-compactness and the characterization of the limit as the fluid
model solution.

\subsection{Notation}
\label{subsec:notatioin}

The following notation will be used throughout.  Let $\N$, $\Z$ and
$\R$ denote the set of natural numbers, integers and real numbers
respectively.  Let $\R_+=[0,\infty)$.  For $a,b\in\R$, write $a^+$ for
the positive part of $a$, $\fl a$ for the integer part, $\cl a$ for
$\fl a+1$, $a\vee b$ for the maximum, and $a\wedge b$ for the minimum.
For any $A\subset\R$, denote $\mathscr B(A)$ the collection of all
Borel subsets which are subsets of $A$.

Let $\M$ denote the set of all non-negative finite Borel measures on
$\R$, and $\M_+$ denote the set of all non-negative finite Borel
measures on $(0,\infty)$.  To simplify the notation, let us take the
convention that for any Borel set $A\subset\R$, $\nu(A\cap
(-\infty,0])=0$ for any $\nu\in\M_+$. Also, by this convention, $\M_+$
is embedded as a subspace of $\M$.  For $\nu_1,\nu_2\in\M$, the
Prohorov metric is defined to be
\begin{equation}\nonumber%\label{def:prohorov}
  \begin{split}
    \pov[\nu_1,\nu_2]=\inf\Big\{\epsilon>0:
    \nu_1(A)\le\nu_2(A^\epsilon)+\epsilon&\text{ and }\\
    \nu_2(A)\le\nu_1(A^\epsilon)+\epsilon&\text{ for all closed Borel
      set }A\subset\R\Big\},
  \end{split}
\end{equation}
where $A^\epsilon=\{b\in\R:\inf_{a\in A}|a-b|<\epsilon\}$. This is the
metric that induces the topology of weak convergence of finite Borel
measures.  (See Section~6 in \cite{Billingsley1999}.)  For any Borel
measurable function $g:\R\to\R$, the integration of this function
with respect to the measure $\nu\in\M$ is denoted by $\inn{g}{\nu}$.

Let $\M_+\times\M$ denote the Cartesian product.  There are a number
of ways to define the metric on the product space.  For convenience
we define the metric to be the maximum of the Prohorov metric
between each component.  With a little abuse of notation, we still
use $\pov$ to denote this metric.

Let $(\mathbf E, \gd)$ be a general metric space.  We consider the
space $\D$ of all right-continuous $\mathbf E$-valued functions with
finite left limits defined either on a finite interval $[0,T]$ or the
infinite interval $[0,\infty)$.  We refer to the space as
$\D([0,T],\mathbf E)$ or $\D([0,\infty),\mathbf E)$ depending upon the
function domain.  The space $\D$ is also known as the space of \rcll{}
functions.  For $g(\cdot),g'(\cdot)\in\D([0,T],\mathbf E)$, the
uniform metric is defined as
\begin{equation}\label{eq:sup-norm}
  \upsilon_T[g,g']=\sup_{0\le t\le T} \gd[g(t),g'(t)].
\end{equation}
However, a more useful metric we will use is the following Skorohod
$J_1$ metric,
\begin{equation}\label{eq:skorohod-L-def}
  \sko_T[g,g']=\inf_{f\in\Lambda_T}(\|f\|^\circ_T\vee\upsilon_T[g,g'\circ f]),
\end{equation}
where $g\circ f(t)=g(f(t))$ for $t\ge 0$ and $\Lambda_T$ is the set of
strictly increasing and continuous mapping of $[0,T]$ onto itself and
\[
\|f\|^\circ_{T}=\sup_{0\le s<t\le T}\big|\log\frac{f(t)-f(s)}{t-s}\big|.
\]
If $g(\cdot)$ and $g'(\cdot)$ are in the space $\D([0,\infty),\mathbf E)$,
the Skorohod $J_1$ metric is defined as
\begin{equation}\label{eq:skorohod-def}
  \sko[g,g']=\int_0^\infty e^{-T}(\sko_T[g,g']\wedge 1)dT.
\end{equation}
By saying convergence in the space $\D$, we mean the convergence under
the Skorohod $J_1$ topology, which is the topology induced by the
Skorohod $J_1$ metric \cite{EthierKurtz1986}.

We use ``$\to$'' to denote the convergence in the metric space
$(\mathbf E,\gd)$, and use ``$\dto$'' to denote the convergence in
distribution of random variables taking value in the metric space
$(\mathbf E, \gd)$.

\section{Stochastic Model}
\label{sec:stochastic-model}

In this section, we first describe the \ggng{} queueing system and
then introduce a pair of measure-valued processes that capture the
dynamics of the system.

There are $n$ identical servers in the system.  Customers arrive
according to a general stationary arrival process (the initial G) with
arrival rate $\lambda$.  Let $a_i$ denote the arrival time of the
$i$th arriving customer, $i=1, 2,\cdots$.  An arriving customer enters
service immediately upon arrival if there is a server available.  If
all $n$ servers are busy, the arriving customer waits in a buffer,
which has infinite capacity.  Customers are served in the order of
their arrival by the first available server.  Waiting customers may
also elect to abandon.  We assume that each customer has a random
patience time.  A customer will abandon immediately when his waiting
time in the buffer exceeds his patience time. Once a customer starts
his service, the customer remains until the service is completed.
There are no retrials; abandoning customers leave without affecting
future arrivals.

The two GIs in the notation mean that the service times and patience
times come from two independent sequences of iid random variables;
these two sequences are assumed to be independent of the arrival
process.  Let $u_i$ and $v_i$ denote the patience and service time of
the $i$th arriving customer, $i=1, 2,\cdots$. In many applications
such as telephone call centers, customers cannot see the queue (the
case of invisible queues, c.f.\ \cite{MandelbaumShimkin2000}), thus do
not know the experience of other customers. In such a case, it is
natural to assume that patience times are iid.  Denote $F(\cdot)$ and
$G(\cdot)$ the distributions for the patience and service times,
respectively.

To describe the system using measure-valued process, we first
introduce the notion of \emph{virtual buffer}.  The virtual buffer
holds all customers in the real buffer and some of the abandoned
customers.  An abandoned customer continues to wait in the virtual
buffer when he first abandons until it were his turn for service had
he not abandoned. At this time, he leaves the virtual buffer.  At any
time $t\ge 0$, $\rbuf(t)$ denotes a measure in $\M$ such that
$\rbuf(t)(C)$ is the number of customers in the virtual buffer with
remaining patience time in $C\in\mathscr B(\R)$.  Please note that
this way of modeling requires $\rbuf(\cdot)$ to be a measure on $\R$,
not just $(0,\infty)$.  It is clear that
\begin{equation}
  \label{eq:QR}
  Q(t)=\rbuf(t)((0,\infty))  \text{  and }
  R(t)=\rbuf(t)(\R)
\end{equation}
represent the number of customers waiting in the real buffer and
number of customers in the virtual buffer, respectively.

We also use a measure to describe the server. At any time $t\ge 0$,
$\ser(t)$ denotes a measure in $\M_+$ such that $\ser(t)(C)$ is the
number of customers in service with remaining service time in
$C\in\mathscr B((0,\infty))$. Different from the virtual buffer, the servers
only hold customers with positive remaining service times, so we only
care about the subsets in $(0,\infty)$. The quantity
\begin{equation}
  \label{eq:Z}
  Z(t)=\ser(t)((0,\infty)),
\end{equation}
represents the number of customers in service at any time $t\ge0$.

The measure-valued (taking value in $\M\times\M_+$) stochastic process
$(\rbuf(\cdot),\ser(\cdot))$ serves as the descriptor for the \ggng{}
queueing model.  Before we use it to describe the dynamics of the
system, let us first talk about the initial condition, since the
system is allowed to be non-empty initially.  The initial state
specifies $R(0)$, the number of customers in the virtual buffer as
well as their remaining patience times $u_i$ and service times $v_i$,
$i=1-R(0), 2-R(0), \cdots, 0$.  The initial state also specifies
$Z(0)$, the number of customers in service as well as their remaining
service times $v_i$, $i=1-R(0)-Z(0), \cdots, -R(0)$.  Briefly, the
initial customers are given negative index, in order not to conflict
with the index of arriving customers. Those initial customers in the
buffer are also assumed to have i.i.d.\ service times with
distribution $G(\cdot)$.  For each $t\ge 0$, denote $E(t)$ the number
of customers that has arrived during time interval $(0, t]$. Arriving
customers are indexed by $1,2,\cdots$ according to the order of their
arrival.  By this way of indexing customers, it is clear that the
index of the first customer in the virtual buffer at time $t\ge0$ is
$B(t)+1$, where
\begin{equation}\label{eq:B(t)}
B(t)=E(t)-R(t).
\end{equation}
Denote $w_i$ the waiting time of the $i$th customers; then
$\tau_i=a_i+w_i$ is the time when the $i$th job starts \emph{service}
for all $i\ge 1-R(0)$. For $i<0$, $a_i$ may be a negative number
indicating how long the $i$th customer had been there by time 0. We
will impose some conditions on $a_i$'s with $i<0$ later on.  Let
$\delta_x$ and $\delta_{(x,y)}$ denote the Dirac point measure at
$x\in\R$ and $(x,y)\in\R^2$, respectively.  Denote $C+x=\{c+x:x\in
C\}$ for any subset $C\subset\R$ and $C_x=(x,\infty)$. For any subsets
$C, C'\subset\R$, let $C\times C'$ denote the Cartesian product. Using
the Dirac measure and the above introduced notations, the evolution of
the system can be captured by the following \emph{stochastic dynamic
  equations}:
\begin{align}
  \label{eq:stoc-dyn-eqn-B}
  \rbuf(t)(C)&=\sum_{i=1+B(t)}^{E(t)}\delta_{u_i}(C+t-a_i),
  \quad \textrm{for all }C\in\mathscr B(\R),\\
  \label{eq:stoc-dyn-eqn-S}
  \begin{split}
    \ser(t)(C)&=\sum_{i=1-R(0)-Z(0)}^{-R(0)}\delta_{v_i}(C+t)\\
    &\quad+\sum_{i=1-R(0)}^{B(t)}\delta_{(u_i,v_i)}
                   (C_0+\tau_i-a_i)\times(C+t-\tau_i),
  \end{split}
  \quad \textrm{for all }C\in\mathscr B((0,\infty)),
\end{align}
for all $t\ge 0$.  Denote the total number of customers in the
system by
\begin{equation*}
  \label{eq:X}
  X(t)=Q(t)+Z(t) \quad\text{for all }t\ge0.
\end{equation*}
The following \emph{policy constraints} must be satisfied at any time
$t\ge 0$,
\begin{align}
  Q(t)&=(X(t)-n)^+,\label{eq:constr-B}\\
  Z(t)&=(X(t)\wedge n),\label{eq:constr-S}
\end{align}
where $n$, as introduced above, denotes the number of servers in the
system.

\section{Main Results}
\label{sec:main-results}

The main results of this paper contains two parts. The first part is a
characterization of the fluid model, including the existence and
uniqueness of the fluid model solution, and the equilibrium of the fluid
model; these results are summarized in
Section~\ref{subsec:fluid-model}. The second part is the convergence
of the stochastic processes to the fluid model solution; this result
is stated in Section~\ref{sec:sequ-queu-models}.

\subsection{Fluid Model}
\label{subsec:fluid-model}

To study the stochastic model, we introduce a determinisitic fluid
model.  To simplify notations, let $F^c(\cdot)$ denote the complement
of the patience time distribution $F(\cdot)$, i.e. $F^c(x)=1-F(x)$ for
all $x\in\R$; the complement of the service time distribution, denoted
by $G^c(\cdot)$, is defined in the same way.  We introduce the
following \emph{fluid dynamic equations}:
\begin{align}
  \label{eq:fluid-dyn-eqn-B}
  \bar\rbuf(t)(C_x)&=\lambda\int_{t-\frac{\bar
      R(t)}{\lambda}}^{t}F^c(x+t-s)ds, \quad t\ge 0, \quad x\in \R,\\
  \label{eq:fluid-dyn-eqn-S}
  \bar\ser(t)(C_x)&=\bar\ser(0)(C_x+t)+\int_0^t F^c\left(\bar
    R(s)/\lambda\right)G^c(x+t-s)d\bar B(s),\quad t\ge 0, \quad x\in (0,\infty),
\end{align}
where $C_x=(x,\infty)$ and $\bar B(s)=\lambda s-\bar R(s)$.  Here, all
the time dependent quanities are assumed to be right continuous on
$[0, \infty)$ and to have left limits in $(0, \infty)$; furthermore,
$\bar{B}(\cdot)$ is a non-decreasing function, and the integral
$\int_0^t g(s)\, d\bar{B}(s)$ is interpreted as the Lebesgue-Stieltjes
integral on the interval $(0, t]$.  The quantities $\bar R(\cdot)$,
$\bar Q(\cdot)$, $\bar Z(\cdot)$ and $\bar X(\cdot)$ are defined in
the same way as their stochastic counterparts in (\ref{eq:QR}),
(\ref{eq:Z}) and (\ref{eq:X}).  The following policy constraints must
be satisfied for all $t\ge 0$,
\begin{align}
\bar Q(t)&=(\bar X(t)-1)^+,\label{eq:fluid-constr-B}\\
\bar Z(t)&=(\bar X(t)\wedge 1).\label{eq:fluid-constr-S}
\end{align}
The fluid dynamic equations \eqref{eq:fluid-dyn-eqn-B} and
\eqref{eq:fluid-dyn-eqn-S} and the policy constraints
\eqref{eq:fluid-constr-B} and \eqref{eq:fluid-constr-S} define a
\emph{fluid model}, which is denoted by $\fm$.

Denote $(\bar\rbuf_0, \bar\ser_0)=(\bar\rbuf(0), \bar\ser(0))$ the
initial condition of the fluid model. For the convenience of
notations, also denote $\bar Q_0=\bar Q(0)$, $\bar Z_0=\bar Z(0)$ and
$\bar X_0=\bar Q_0+\bar Z_0$.  We need to require that the initial
condition satisfies the dynamic equations and the policy constraints,
i.e.\
\begin{align}
  \label{eq:valid-init-buf}
  \bar\rbuf_0(C_x)&=\lambda\int_0^{\frac{\bar
      R_0}{\lambda}}F^c(x+s)ds, \quad x\in \R,\\
  \label{eq:valid-constr-B}
  \bar Q_0&=(\bar X_0-1)^+,\\
  \label{eq:valid-constr-S}
  \bar Z_0&=(\bar X_0 \wedge 1).
\end{align}
We also require that
\begin{equation}\label{eq:valid-0}
\bar\ser_0(\{0\})=0,
\end{equation}
which means that nobody with remaining service time 0 stays in the
server. We call any element $(\bar\rbuf_0, \bar\ser_0)\in\M\times\M_+$ a
\emph{valid} initial condition if it satisfies
\eqref{eq:valid-init-buf}--\eqref{eq:valid-0}.

We call
$(\bar\rbuf(\cdot),\bar\ser(\cdot))\in\D([0,\infty),\M\times\M_+)$ a
solution to the fluid model $\fm$ with a valid initial condition
$(\bar\rbuf_0,\bar\ser_0)$ if it satisfies the fluid dynamic equations
\eqref{eq:fluid-dyn-eqn-B} and \eqref{eq:fluid-dyn-eqn-S} and the
policy constraints \eqref{eq:fluid-constr-B} and
\eqref{eq:fluid-constr-S}.

Denote $\mu$ the reciprocal of first moment of the service time
distribution $G(\cdot)$.  Let
\begin{equation}\label{eq:support-F}
M_F=\inf\{x\ge 0: F(x)=1\}.
\end{equation}
By the right continuity, it is clear that $F(x)<1$ for all $x<M_F$ and
$F(x)=1$ for all $x\ge M_F$.  If the patience time distribution
$F(\cdot)$ has a density $f(\cdot)$, then define the hazard rate
$h_F(\cdot)$ of the distribution $F(\cdot)$ by
\begin{equation*}
  h_F(x)=\left\{
  \begin{array}{ll}
    \frac{f(x)}{1-F(x)} & x<M_F,\\
    0 & x\ge M_F.
  \end{array}\right.
\end{equation*}

\begin{thm}[Existence and Uniqueness]\label{thm:uniq-exit}
  Assume the service time distribution satisfies both that
  \begin{equation}\label{cond:G-no-atom}
    G(\cdot) \textrm{ is continuous,}
  \end{equation}
  and that
  \begin{equation}\label{cond:fl-1}
     0<\mu<\infty.
  \end{equation}
  Assume the patience time distribution satisfies either that
  \begin{equation}
    \label{cond:fl-3-opt}
    F(\cdot )\textrm{ is Lipschitz continuous},
  \end{equation}
  or that $F(\cdot)$ has a density $f(\cdot)$ such that the
  hazard rate is bounded, i.e.\
  \begin{equation}\label{cond:fl-3}
    \sup_{x\in[0,\infty)}h_F(x)<\infty.
  \end{equation}
  There exists a unique solution to the fluid model $\fm$ for any
  valid initial condition $(\bar\rbuf_0, \bar\ser_0)$.
\end{thm}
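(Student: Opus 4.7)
The plan is to reduce the fluid model to a single nonlinear Volterra equation for the scalar cumulative-entry-to-service function $\bar B(\cdot) = \lambda(\cdot) - \bar R(\cdot)$, and to solve it by Picard iteration/contraction on short intervals, patching the local solutions across transitions between the under-loaded and over-loaded regimes.

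\emph{Reduction.} Once $\bar B$ is specified as a nondecreasing $\lambda$-Lipschitz function with $\bar R = \lambda(\cdot) - \bar B \ge 0$, substituting into \eqref{eq:fluid-dyn-eqn-B}--\eqref{eq:fluid-dyn-eqn-S} determines $\bar\rbuf(\cdot)$ and $\bar\ser(\cdot)$, and hence $\bar Q(\cdot), \bar Z(\cdot), \bar X(\cdot)$, explicitly. The policy constraints \eqref{eq:fluid-constr-B}--\eqref{eq:fluid-constr-S} are then equivalent to requiring, at each $t$, either (i) $\bar R(t) = 0$ and $\bar Z(t) \le 1$ (under-loaded, i.e.\ $\bar B(t) = \lambda t$), or (ii) $\bar Z(t) = 1$ (over-loaded). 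That $\bar Q(t) = 0$ forces $\bar R(t) = 0$ is immediate from $\bar Q(t) = \lambda \int_0^{\bar R(t)/\lambda} F^c(u)\,du$ (obtained by setting $x=0$ in \eqref{eq:fluid-dyn-eqn-B}) together with $F^c(0)=1$, so regime (i) is characterized by a single scalar condition.

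\emph{Local solvability.} In regime (i) the solution is explicit. In regime (ii), setting $x=0$ in \eqref{eq:fluid-dyn-eqn-S} and using $\bar Z(t)=1$ produces the nonlinear Volterra equation
\begin{equation*}
1 \;=\; \bar\ser_0((t,\infty)) + \int_0^t F^c\!\Big(\frac{\lambda s - \bar B(s)}{\lambda}\Big)\,G^c(t-s)\, d\bar B(s),
\end{equation*}
which I would solve on a short interval $[t_0, t_0+\delta]$ by Picard iteration in the Banach space of continuous functions with the supremum norm. The key estimate is global Lipschitz continuity of $F^c$ on $[0,\infty)$: assumption \eqref{cond:fl-3-opt} gives this directly, while \eqref{cond:fl-3} delivers it through $|F^c(x)-F^c(y)|\le \|h_F\|_\infty|x-y|$ on $[0,M_F)$ and $F^c\equiv 0$ beyond $M_F$. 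Combined with $G^c\le 1$ and $d\bar B\le \lambda\, ds$, a standard Gronwall-type argument shows the Volterra operator is a contraction on $[t_0, t_0+\delta]$ for a $\delta$ that depends only on the model data, not on $t_0$. The continuity of $G$ (assumption \eqref{cond:G-no-atom}) is used here to guarantee that the kernel $G^c(t-\cdot)$, and the image $\bar Z$ of the iteration, are continuous in $t$.

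\emph{Patching and main obstacle.} Alternating the explicit under-loaded formula with the local Volterra solution at each first hitting time of the boundary (either $\bar Z = 1$ from below or $\bar R = 0$ from above) extends the solution to all of $[0,\infty)$; continuity of $G$ is again essential because it ensures $t\mapsto \bar\ser(t)((0,\infty))$ is continuous and so the transition times are well defined. The main obstacle is twofold: making the local contraction estimate uniform in $t_0$, and ruling out pathological chattering between the two regimes on any bounded interval. The first is handled by a careful Gronwall iteration that exhibits the Volterra kernel's dependence on $\bar B$ through the Lipschitz bound on $F^c$ and the bound $d\bar B\le \lambda\,ds$; the second follows from the continuity of $\bar Z$ together with the Lipschitz kernel, which prevents infinitely many crossings of the level $1$ from accumulating in finite time. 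Uniqueness is a direct byproduct of the local contraction, and the valid-initial-condition requirements \eqref{eq:valid-init-buf}--\eqref{eq:valid-0} ensure the starting regime is unambiguous.
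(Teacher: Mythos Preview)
Your regime-switching plan is a different route from the paper's, and it has a real gap at its core. In regime~(ii) your ``Volterra equation'' reads
\[
1 \;=\; \bar\ser_0((t,\infty)) + \int_0^t F^c\!\Big(\tfrac{\lambda s - \bar B(s)}{\lambda}\Big)\,G^c(t-s)\, d\bar B(s),
\]
and the unknown $\bar B$ appears both inside $F^c$ \emph{and} as the integrator. This is not of the form $\bar B = T(\bar B)$; you never write down a Picard map, and ``a standard Gronwall-type argument shows the Volterra operator is a contraction'' is asserted for an operator that has not been defined. The natural way to extract $\bar B(t)$ would be to differentiate in $t$, but under assumption~\eqref{cond:G-no-atom} the distribution $G$ is merely continuous, so $G^c$ need not be differentiable and this route is blocked. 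Your contraction estimate also uses $d\bar B\le\lambda\,ds$ as an input, whereas the $\lambda$-Lipschitz/nondecreasing property of $\bar B$ is something that must be \emph{produced} by the construction, not assumed; the paper spends a nontrivial lemma (Lemma~\ref{lem:reg-map--incre}) on exactly this point. Finally, ``continuity of $\bar Z$ \ldots prevents infinitely many crossings of the level~$1$'' is not an argument: continuous functions can cross a level infinitely often on a bounded interval, so your patching step needs substantially more.

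The paper avoids all three difficulties at once by changing the unknown. Using $\bar Q=\lambda F_d(\bar R/\lambda)$, one has $F^c(\bar R/\lambda)=H(\bar Q)$ with $H=F^c\circ F_d^{-1}(\cdot/\lambda)$ Lipschitz, and $d\bar Q = F^c(\bar R/\lambda)\,d\bar R$; an integration by parts then turns the $d\bar B$-integral into integrals against the fixed measures $dG$ and $dG_e$, yielding the single fixed-point equation
\[
\bar X(t)=\zeta_0(t)+\rho\int_0^t H\big((\bar X(t-s)-1)^+\big)\,dG_e(s)+\int_0^t(\bar X(t-s)-1)^+\,dG(s),
\]
valid in \emph{all} regimes simultaneously. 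Here the unknown $\bar X$ appears only in the integrand, so a genuine contraction on $\D([0,b],\R)$ applies (Lemma~\ref{reg-map}), with no regime switching and no chattering issue; monotonicity of $\bar B$ is then verified a posteriori from the solution $\bar X$ via Lemma~\ref{lem:reg-map--incre}. If you try to make your regime-(ii) equation explicit, the algebra that removes $d\bar B$ as integrator is precisely this change of variable, so the honest completion of your program collapses into the paper's argument.
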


The above theorem provides the foundation to further study the fluid
model. A key property is that the fluid model  has an
equilibrium state. An equilibrium state is defined as the following:

\begin{defn}\label{def:eqm-state}
  An element $(\bar\rbuf_\infty,\bar\ser_\infty)\in\M\times\M_+$ is
  called an \emph{equilibrium state} for the fluid model $\fm$ if the
  solution to the fluid model with initial condition
  $(\bar\rbuf_\infty,\bar\ser_\infty)$ satisfies
  \[
    (\bar\rbuf(t),\bar\ser(t))=(\bar\rbuf_\infty,\bar\ser_\infty)\quad
    \textrm{for all }t\ge 0.
  \]
\end{defn}

This definition says that if a fluid model solution starts from an
equilibrium state, it will never change in the future. To present the
result about equilibrium state, we need to introduce some more
notation.  For the service time distribution function $G(\cdot)$ on
$\R_+$, the associated \emph{equilibrium} distribution is given by
\[G_e(x)=\mu\int_0^xG^c(y)dy,\quad\textrm{for all }x\ge 0.\]

\begin{thm}\label{thm:eqm}
  Assume the conditions in Theorem~\ref{thm:uniq-exit}. The state
  $(\bar\rbuf_\infty,\bar\ser_\infty)$ is an equilibrium state of the
  fluid model $\fm$ if and only if it satisfies
  \begin{align}
    \label{eq:eqm-B}
    \bar\rbuf_\infty(C_x)&=\lambda\int_0^{w}F^c(x+s)ds, \quad x\in \R,\\
    \label{eq:eqm-S}
    \bar\ser_\infty(C_x)&=\min\left(\rho,1\right)[1-G_e(x)], \quad
    x\in (0,\infty),
  \end{align}
 where $w$ is a solution to the equation
  \begin{equation}\label{eq:eqm-w}
    F(w)=\max\left(\frac{\rho-1}{\rho},0\right).
  \end{equation}
\end{thm}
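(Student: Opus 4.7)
The proof of both directions hinges on substituting the constant trajectory $(\bar\rbuf(t),\bar\ser(t))\equiv(\bar\rbuf_\infty,\bar\ser_\infty)$ into the fluid dynamic equations \eqref{eq:fluid-dyn-eqn-B}--\eqref{eq:fluid-dyn-eqn-S}. Writing $w=\bar R_\infty/\lambda$, the constancy of $\bar R$ gives $\bar B(s)=\lambda s-\bar R_\infty$, hence $d\bar B(s)=\lambda\,ds$ and $F^c(\bar R(s)/\lambda)=F^c(w)$. A change of variable then collapses \eqref{eq:fluid-dyn-eqn-B} to $\bar\rbuf_\infty(C_x)=\lambda\int_0^w F^c(x+u)\,du$, which is exactly \eqref{eq:eqm-B}, while \eqref{eq:fluid-dyn-eqn-S} becomes
\begin{equation}\label{eq:plan-ser}
\bar\ser_\infty(C_x)=\bar\ser_\infty(C_x+t)+\lambda F^c(w)\int_0^t G^c(x+u)\,du\qquad\text{for every }t\ge 0.
\end{equation}

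For the necessity direction ($\Rightarrow$), I let $t\to\infty$ in \eqref{eq:plan-ser}. Since $\bar\ser_\infty$ is a finite measure on $(0,\infty)$, $\bar\ser_\infty(C_x+t)\downarrow 0$; together with $\int_0^\infty G^c(x+u)\,du=(1-G_e(x))/\mu$ this yields $\bar\ser_\infty(C_x)=\rho F^c(w)(1-G_e(x))$, and evaluating at $x=0$ gives $\bar Z_\infty=\rho F^c(w)$. I then apply the policy constraint \eqref{eq:fluid-constr-S} and split on $\bar Q_\infty$: when $\bar Q_\infty>0$, the constraint forces $\bar Z_\infty=1$, so $F^c(w)=1/\rho$, requiring $\rho>1$ and $F(w)=(\rho-1)/\rho$; when $\bar Q_\infty=0$, \eqref{eq:fluid-constr-B} gives $\bar X_\infty\le 1$, so $\rho F^c(w)=\bar Z_\infty\le 1$, while $\lambda\int_0^w F^c(s)\,ds=0$ forces $F(w)=0$. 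In either branch $\rho F^c(w)=\min(\rho,1)$ and $F(w)=\max((\rho-1)/\rho,0)$, which together with the formula for $\bar\ser_\infty(C_x)$ above produce \eqref{eq:eqm-S} and \eqref{eq:eqm-w}.

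For the sufficiency direction ($\Leftarrow$), I first verify that the state defined by \eqref{eq:eqm-B}--\eqref{eq:eqm-w} is a valid initial condition: letting $x\to-\infty$ in \eqref{eq:eqm-B} gives $\bar R_\infty=\lambda w$, so \eqref{eq:valid-init-buf} is immediate; \eqref{eq:valid-0} holds because the right-hand side of \eqref{eq:eqm-S} is continuous at $x=0$ with $1-G_e(0)=1$; and \eqref{eq:valid-constr-B}--\eqref{eq:valid-constr-S} follow from the same case split above, using $\bar Q_\infty=\lambda\int_0^w F^c(s)\,ds$ and $\bar Z_\infty=\min(\rho,1)$. I then verify that the constant trajectory satisfies \eqref{eq:fluid-dyn-eqn-B}--\eqref{eq:fluid-dyn-eqn-S}; the only nontrivial identity required in \eqref{eq:plan-ser} is $\lambda F^c(w)=\mu\min(\rho,1)$, which is exactly $\rho F^c(w)=\min(\rho,1)$ established in the necessity argument. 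The uniqueness part of Theorem~\ref{thm:uniq-exit} then guarantees that the constant trajectory is the fluid model solution from this initial condition, so $(\bar\rbuf_\infty,\bar\ser_\infty)$ is an equilibrium.

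The delicate point is the necessity argument's case split, where the policy constraint must be translated into information about $F^c(w)$ while keeping track of the regime degeneracies ($\rho<1$ forces $\bar Q_\infty=0$ and pushes $w$ into the zero set of $F$, whereas $\rho>1$ pins $F(w)=(\rho-1)/\rho$ up to flat pieces of $F$); once the clean algebraic identification $\rho F^c(w)=\min(\rho,1)$ is in hand, everything else is either a routine change of variable or a direct substitution.
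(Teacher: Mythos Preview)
Your proposal is correct and follows essentially the same approach as the paper: substitute the constant trajectory into the fluid dynamic equations, let $t\to\infty$ in the server equation to obtain $\bar\ser_\infty(C_x)=\rho F^c(w)G_e^c(x)$, and then use the policy constraints in a two-case split to pin down $F(w)$ and $\rho F^c(w)$. The only cosmetic differences are that the paper splits on $\bar R_\infty>0$ versus $\bar R_\infty=0$ (equivalent to your split on $\bar Q_\infty$ since $\bar Q_\infty=\lambda F_d(w)$ and $F_d$ is strictly increasing at the origin), and that in the sufficiency direction the paper simply asserts the constant trajectory is a fluid solution, whereas you make the appeal to the uniqueness part of Theorem~\ref{thm:uniq-exit} and the valid-initial-condition check explicit.
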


\begin{rem}
  If equation \eqref{eq:eqm-w} has multiple solutions, then the
  equilibrium is not unique (any solution $w$ gives an
  equilibrium). If the equation has a unique solution (for example
  when $F(\cdot)$ is strictly increasing), then the equilibrium state
  is unique.
\end{rem}

The quantity $w$ is interpreted to be the \emph{offered} waiting time
for an arriving customer. If his patience time exceeds $w$, he will
not abandon. Thus, the probabilty of his abandonment is given by
$F(w)$, which is equal to $(\rho-1)/\rho$ when $\rho>1$; the latter
quantity is the fraction of traffic that has to be discarded due to
the overloading.  From (\ref{eq:eqm-B}),
$\bar\rbuf_\infty(C_x)=\lambda w$ for $x\le -w$. Thus, the average
number of customers in the virtual buffer is
 \begin{displaymath}
\bar R_\infty = \bar\rbuf_\infty(\R) = \lambda w,
 \end{displaymath}
 which is consistent with the Little's law. From (\ref{eq:eqm-S}), the
 average number of busy servers is
\begin{displaymath}
  \bar Z_\infty = \bar\ser_\infty((0,\infty)) = \min(\rho, 1),
\end{displaymath}
which is intuitively clear.  These observations and interpretations
were first made by Whitt~\cite{Whitt2006}, where approximation
formulas based on a conjectured fluid model were also given, and were
compared with extensive simulation results.  The approximation
formulas derived from our fluid model is consistent with those
formulas in Whitt~\cite{Whitt2006}.

% The following result shows that $(\bar\rbuf_\infty,
% \bar\ser_\infty)$ serves as the limit of the fluid model solution as
% $t\to\infty$, which does not depend on the valid initial condition.

% \begin{thm}[Convergence to Equilibrium State]\label{thm:eqm-state}
%   Assume that $G(\cdot)$ and $F(\cdot)$ are distribution functions on
%   $[0,\infty)$ such that $G(0)=0$, $\int_0^\infty G^c(x)dx<\infty$ and
%   $F(\cdot)$ is Lipschitz continuous.  Let
%   $(\bar\rbuf(\cdot),\bar\ser(\cdot))\in\D([0,\infty),\M\times\M_+)$ be
%   the solution to the fluid model $\fm$ with a valid initial condition
%   $(\bar\rbuf_0, \bar\ser_0)$.  We have that
%   \[
%   (\bar\rbuf(t),\bar\ser(t))\to(\bar\rbuf_\infty,\bar\ser_\infty)\quad
%   \text{as }t\to\infty.
%   \]
% \end{thm}

\subsection{Convergence of Stochastic Models}
\label{sec:sequ-queu-models}

We consider a sequence of queueing systems indexed by the number of
servers $n$, with $n\to\infty$.  Each model is defined in the same way
as in Section~\ref{sec:stochastic-model}.  The arrival rate of each
model is assumed be to proportional to $n$.  To distinguish models
with different indices, quantities of the $n$th model are accompanied
with superscript $n$.  Each model may be defined on a different
probability space $(\Omega^n,\mathcal F^n,\mathbb P^n)$.  Our results
concern the asymptotic behavior of the descriptors under the
\emph{fluid} scaling, which is defined by
\begin{equation}
  \fs\rbuf(t)=\frac{1}{n} \rbuf^n(t),\quad
  \fs\ser(t)=\frac{1}{n} \ser^n(t),
\end{equation}
for all $t\ge 0$.  The fluid scaling for the arrival process
$E^n(\cdot)$ is defined in the same way, i.e.
\[\fs E(t)=\frac{1}{n}E^n(t),\]
for all $t\ge 0$. We assume that
\begin{equation}\label{eq:cond-A}
  \fs E(\cdot)\dto\lambda \cdot\quad\text{as }n\to\infty.
\end{equation}
Since the limit is deterministic,  the convergence in distribution in
(\ref{eq:cond-A}) is equivalent to
convergence in probability; namely, for each $T>0$ and each
$\epsilon>0$,
\begin{equation}
  \nonumber%\label{eq:arrivalConvergence}
  \lim_{n\to\infty}  \mathbb{P}^n\Bigl( \sup_{0\le t \le T}
  |\bar{E}^n(t)-\lambda t|
  > \epsilon\Bigr) =0.
\end{equation}
Denote $\nu^n_F$ and $\nu^n_G$ the probability measures corresponding to
the patience time distribution $F^n$ and the service time distribution
$G^n$, respectively. Assume that as $n\to\infty$,
\begin{equation}\label{eq:cond-measures}
  \nu^n_F\to \nu_F, \quad \nu^n_G\to \nu_G,
\end{equation}
where $\nu_F$ and $\nu_G$ are some probability measures with
associated distribution functions $F$ and $G$.  Also, the following
initial condition will be assumed:
\begin{align}
  \label{eq:cond-initial}
  (\fs\rbuf(0),\fs\ser(0))&\dto(\bar\rbuf_0,\bar\ser_0)
  \quad\textrm{as }n\to\infty,
\end{align}
where, almost surely, $(\bar\rbuf_0,\bar\ser_0)$ is a valid initial
condition and
\begin{align}
  \label{eq:cond-initial-noatom}
  \bar\rbuf_0 \textrm{ and }\bar\ser_0 \textrm{ has no atoms}.
%   \label{eq:cond-initial-noatom}
%   \inn{\chi_+}{\bar\rbuf_0+\bar\ser_0}<\infty,
\end{align}
%where $\chi_+$ denotes the identity function on $\R_+$.

\begin{thm}\label{thm:fluid-limit}
  In addition to the assumptions \eqref{cond:G-no-atom}--\eqref{cond:fl-3}
  in Theorem~\ref{thm:uniq-exit}, if the sequence of many-server
  queues satisfies
  \eqref{eq:cond-A}--\eqref{eq:cond-initial-noatom}, then
  \[(\fs\rbuf(\cdot),\fs\ser(\cdot))\dto(\bar\rbuf(\cdot),\bar\ser(\cdot))
    \quad\textrm{ as }n\to\infty,\]
  where, almost surely,
  $(\bar\rbuf(\cdot),\bar\ser(\cdot))$ is the
  unique solution to the fluid model $\fm$ with initial condition
  $(\bar\rbuf_0,\bar\ser_0)$.
\end{thm}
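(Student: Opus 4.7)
The plan is to follow the classical prelimit/limit/uniqueness scheme. First, establish pre-compactness of the sequence $\{(\fs\rbuf(\cdot),\fs\ser(\cdot))\}$ in $\D([0,\infty),\M\times\M_+)$. Second, show that every weak subsequential limit is, almost surely, a solution to the fluid model $\fm$ with initial condition $(\bar\rbuf_0,\bar\ser_0)$. Third, invoke the uniqueness half of Theorem~\ref{thm:uniq-exit} to conclude that the limit is deterministic and equal to the unique fluid model solution, which upgrades subsequential convergence to full convergence.

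For pre-compactness, I would verify Jakubowski's criterion for tightness of measure-valued processes, which amounts to a compact containment property plus an oscillation bound. Compact containment reduces to showing that for each $T$, the total masses $\fs\rbuf(t)(\R)$ and $\fs\ser(t)((0,\infty))$ are uniformly bounded for $t\in[0,T]$ and that the tail masses on $\{|x|>K\}$ vanish uniformly in $n$ as $K\to\infty$. The mass bounds follow from $\fs\ser(t)((0,\infty))\le 1$ (the $n$-server constraint), $\fs\rbuf(t)(\R)\le \fs{X}(t)\le \fs{X}(0)+\fs E(t)$, and the assumed convergence \eqref{eq:cond-A}. Uniform tail-smallness comes from bounding remaining patience times by the original $u_i$'s (whose empirical measure converges by \eqref{eq:cond-measures}), together with the initial-condition convergence \eqref{eq:cond-initial}; analogously for $\ser$. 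The oscillation bound follows from the fact that all jumps of $(\fs\rbuf,\fs\ser)$ have size at most $1/n$, so the modulus of continuity over short intervals is controlled uniformly.

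For the characterization step, I would use the Skorohod representation theorem to work on a single probability space where $(\fs\rbuf(\cdot),\fs\ser(\cdot))\to(\bar\rbuf(\cdot),\bar\ser(\cdot))$ almost surely along a subsequence. Then I pass to the limit in the stochastic dynamic equations~\eqref{eq:stoc-dyn-eqn-B}--\eqref{eq:stoc-dyn-eqn-S}. The key observation is that the sums on the right-hand side are empirical measures built from the marks $u_i,v_i$, which by \eqref{eq:cond-measures} and a functional strong law of large numbers converge to the deterministic distribution functions $F^c,G^c$; grouping arrivals by their arrival epoch $a_i$ (whose empirical measure on $[0,t]$ behaves like Lebesgue measure times $\lambda$ by \eqref{eq:cond-A}) produces the integral expressions in \eqref{eq:fluid-dyn-eqn-B}--\eqref{eq:fluid-dyn-eqn-S}. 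The policy constraints \eqref{eq:constr-B}--\eqref{eq:constr-S} pass to the limit because continuity of $G$, continuity of $F$, and the no-atom assumption \eqref{eq:cond-initial-noatom} together propagate the no-atom property of $\bar\ser(t)$ along $t$, so that evaluations $\nu\mapsto \nu((0,\infty))$ are continuous at the limit.

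The main obstacle is justifying convergence of the service equation~\eqref{eq:stoc-dyn-eqn-S} to~\eqref{eq:fluid-dyn-eqn-S}. Two intertwined difficulties must be handled: the integrator $\bar B^n(s)=\fs E(s)-\fs R(s)$ depends on the unknown process itself and is merely of bounded variation, so one needs uniform convergence of $\bar B^n$ together with equicontinuity of the integrands to transfer the limit inside a Lebesgue--Stieltjes integral; and the integrand contains $F^c(\fs R(s)/\lambda)$, whose continuity in $\fs R$ requires continuity of $F$ at the limiting value $\bar R(s)/\lambda$, which in turn is supplied by \eqref{cond:fl-3-opt} or \eqref{cond:fl-3}. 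A secondary obstacle is controlling $\ser^n(t)(C_0)$ near the boundary $x=0$: one must show $\fs\ser(t)(\{0\})\to 0$ uniformly in $t$ on compact sets so that customers finishing service are correctly accounted for in the balance between the buffer and the servers; this is where continuity of $G$ in~\eqref{cond:G-no-atom} and the no-atom initial condition~\eqref{eq:cond-initial-noatom} play essential roles.
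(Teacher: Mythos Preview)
Your overall three-step scheme (pre-compactness, identification of subsequential limits as fluid solutions, uniqueness) matches the paper's approach, and your description of compact containment and of the characterization step is broadly correct.

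There is, however, a genuine gap in your oscillation-bound argument. You write that ``all jumps of $(\fs\rbuf,\fs\ser)$ have size at most $1/n$, so the modulus of continuity over short intervals is controlled uniformly.'' Small jump sizes do \emph{not} control the oscillation: over a time window $[s,t]$ of length $\delta$, the server measure $\fs\ser$ loses every customer whose remaining service time at time $s$ lies in $(0,\delta]$, and nothing in the jump-size bound prevents $\fs\ser(s)((0,\delta])$ from being of order one. The paper handles this by first proving an \emph{asymptotic regularity} lemma (Lemma~\ref{lem:asymp-reg}): for every $\epsilon,\eta>0$ there is $\kappa>0$ such that, with high probability, $\sup_{t\le T}\sup_{x\ge 0}\fs\ser(t)([x,x+\kappa])\le\epsilon$. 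This uses the continuity of $G$ \eqref{cond:G-no-atom} and the no-atom initial condition \eqref{eq:cond-initial-noatom} in an essential way and is the real content behind the oscillation bound; once it is in hand, one bounds $\fs B(s,t)\le \fs E(s,t)+\fs\ser(s)((0,t-s])$ and obtains $\pov[\fs\ser(s),\fs\ser(t)]\le C\epsilon$ for $|t-s|$ small.

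You do touch on a related point later (``controlling $\fs\ser(t)(\{0\})\to 0$''), but you treat it as a secondary boundary issue for the identification step rather than as the crux of the oscillation estimate. In fact the same regularity property is used twice: first to get tightness, and second to ensure that the functionals $\nu\mapsto\nu((0,\infty))$ and the Riemann-sum approximations in \eqref{eq:server-sko-prep-le-GC}--\eqref{eq:server-sko-prep-ge-GC} behave well at the limit. Your proposal would be complete if you inserted this asymptotic-regularity step and used it, rather than the $1/n$ jump bound, to drive the oscillation argument.
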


\section{Properties of the Fluid Model}
\label{sec:prop-fluid-model}

In this section, we analyze the proposed fluid model and establish
some basic properties of the fluid model solution.  The proof of
Theorem~\ref{thm:uniq-exit} for existence and uniqueness and the proof
of Theorem~\ref{thm:eqm} for characterization of the equilibrium will
be presented in Section~\ref{subsec:exist-uniq} and
Section~\ref{sec:equil-state-fluid}, respectively. 

\subsection{Existence  and Uniqueness of Fluid Model Solutions}
\label{subsec:exist-uniq}

We first present some calculus on the fluid dynamic equations
\eqref{eq:fluid-dyn-eqn-B} and \eqref{eq:fluid-dyn-eqn-S}, which
define the fluid model.  It follows from \eqref{eq:fluid-dyn-eqn-B}
that
\begin{align*}
  \bar Q(t)=\bar \rbuf(t)(C_0)
  =\lambda\int_{t-\frac{\bar R(t)}{\lambda}}^{t}F^c(t-s)ds
  =\lambda\int_{0}^{\frac{\bar R(t)}{\lambda}}F^c(s)ds.
\end{align*}
Let 
\begin{equation*}
  F_d(x)=\int_0^x[1-F(y)]dy\quad\textrm{for all }x\ge 0.
\end{equation*}
Please note that the density of $F_d(\cdot)$ is not scaled by the mean
of $F(\cdot)$. Thus, this is not exactly the equilibrium distribution
associated with $F(\cdot)$.  In fact, we do not need the mean
\begin{equation}
  \label{eq:def-N-F}
  N_F=\int_0^\infty[1-F(y)]dy
\end{equation}
to be finite.  Now we have
\begin{equation}\label{eq:fluid-buffer}
  \frac{\bar Q(t)}{\lambda}=F_d(\frac{\bar R(t)}{\lambda}).
\end{equation}
It follows from \eqref{eq:fluid-dyn-eqn-S} that
\begin{align*}
  \bar Z(t)=\bar\ser(t)(C_0)
  &=\bar\ser_0(C_0+t)+\lambda\int_0^tF^c(\frac{\bar R(s)}{\lambda})G^c(t-s)ds\\
  &\quad-\int_0^tF^c(\frac{\bar R(s)}{\lambda})G^c(t-s)d\bar R(s).
\end{align*}
Note that by \eqref{eq:fluid-buffer}, $d\bar Q(s)=F^c(\frac{\bar
  R(s)}{\lambda})d\bar R(s)$.  So
\begin{align*}
  \bar Z(t)&=\bar\ser_0(C_0+t)+\frac{\lambda}{\mu}
    \int_0^tF^c(\frac{\bar R(s)}{\lambda})dG_e(t-s)
   -\int_0^tG^c(t-s)d\bar Q(s).
\end{align*}
Performing change of variable and integration by parts, we have
\begin{equation}\label{eq:fluid-server}
  \begin{split}
    \bar Z(t)&=\bar \ser_0(C_t)+\frac{\lambda}{\mu}
      \int_0^tF^c(\frac{\bar R(t-s)}{\lambda})dG_e(s)\\
    &\quad-\bar Q(t)G^c(0)+\bar Q(0)G^c(t)+\int_0^t\bar Q(t-s)d G(s).
  \end{split}
\end{equation}
We wish to represent the term $F^c(\frac{\bar R(\cdot)}{\lambda})$
using $\bar Q(\cdot)$.  Recall $M_F$ and $N_F$, which are defined in
\eqref{eq:support-F} and \eqref{eq:def-N-F}, respectively. It is clear
that $F_d(x)$ is strictly monotone for $x\in [0, M_F)$. Thus,
$F_d^{-1}(y)$ is well defined for each $y\in [0, N_F)$.  We define
$F_d^{-1}(y)=M_F$ for all $y\ge N_F$.  Thus, (\ref{eq:fluid-buffer})
implies that
% \begin{equation}\label{eq:fluid-buffer-size}
%   \bar{R}(t)/\lambda = F_d^{-1}(\bar{Q}(t)/\lambda) \quad \text{
%     for } t\ge 0.
% \end{equation}
% By \eqref{eq:fluid-buffer-size} we have that
\begin{equation}\label{eq:fluid-buffer-inv}
F^c(\frac{\bar R(t)}{\lambda})
=F^c\left(F_d^{-1}(\frac{\bar Q(t)}{\lambda})\right).
\end{equation}
Note that $G^c(0)=1$ by assumption~\eqref{cond:G-no-atom}.  Combining
\eqref{eq:fluid-constr-B}, \eqref{eq:fluid-constr-S},
\eqref{eq:fluid-server}, and \eqref{eq:fluid-buffer-inv}, we obtain
\begin{equation}\nonumber
  \begin{split}
    \bar X(t)&=\bar\ser_0(C_t)+\bar Q_0G^c(t)\\
    &\quad+\frac{\lambda}{\mu}\int_0^tF^c
    \Big(F_d^{-1}(\frac{(\bar X(t-s)-1)^+}{\lambda})\Big)dG_e(s)\\
    &\quad+\int_0^t(\bar X(t-s)-1)^+ d G(s).
  \end{split}
\end{equation}
Now, introduce
\begin{equation*}
H(x)=
\begin{cases}
F^c(F_d^{-1}(\frac{x}{\lambda})) & \text{ if } 0\le x<\lambda, \\
0 & \text{ if } x \ge \lambda,
\end{cases}
\end{equation*}
and
$\zeta_0(\cdot)=\bar\ser_0(C_0+\cdot)+\bar Q_0G^c(\cdot)$.  It then
follows that
\begin{equation}\label{eq:fluid-key}
  \begin{split}
    \bar X(t)&=\zeta_0(t)+\rho\int_0^t H\big((\bar
    X(t-s)-1)^+\big)dG_e(s)+\int_0^t(\bar X(t-s)-1)^+ d G(s).
  \end{split}
\end{equation}
Please note that $\zeta_0(\cdot)$ depends only on the initial
condition and $H(\cdot)$ is a function defined by the arrival rate
$\lambda$ and the patience time distribution $F(\cdot)$.  The equation
\eqref{eq:fluid-key} serves as a key to the analysis of the fluid
model.

\begin{proof}[Proof of Theorem~\ref{thm:uniq-exit}]
  We first prove the existence.  Given a valid initial condition
  $(\bar\rbuf_0,\bar\ser_0)$ (i.e. an element in $\M\times\M_+$ that
  satisfies \eqref{eq:valid-init-buf}--~\eqref{eq:valid-0}), we now
  construct a solution $(\bar\rbuf(\cdot),\bar\ser(\cdot))$ to the
  fluid model $\fm$ with this initial condition.  If the patience time
  distribution $F(\cdot)$ is Lipschitz continuous, then it is clear
  that $H(\cdot)$ is also Lipschitz continuous; if $F(\cdot)$ has a
  density, then the function $H(\cdot)$ is differentiable and has
  derivative
  \begin{equation*}
  H'(x)=-f(y)\frac{1}{F_d'(y)}=\frac{-f(y)}{1-F(y)}=-h_F(y),  
  \end{equation*}
  on the interval $(0,\lambda N_F)$ if $y=F_d^{-1}(x)$ and $H(x)=0$
  for all $x\ge \lambda N_F$.  By condition \eqref{cond:fl-3},
  \begin{displaymath}
    \sup_{0< x <\lambda N_F} |H'(x)| = \sup_{y\in [0, M_F)} h_F(y),
  \end{displaymath}
  which implies that $H(\cdot)$ is Lipschitz continuous.  It follows
  from Lemma~\ref{reg-map} that the equation \eqref{eq:fluid-key} has
  a unique solution $\bar X(\cdot)$.  Denote $\bar Q(t)=(\bar X(t)-1)^+$.
  %---------- claim 1 ----------
  We first claim that $\bar Q(t)/\lambda\le N_F$ for all $t\ge 0$. The
  claim is automatically true if $N_F=\infty$. Now, let us consider the
  case where $N_F<\infty$. Since $(\bar\rbuf_0,\bar\ser_0)$ is a valid
  initial condition, $\bar Q(0)/\lambda\le N_F$. Suppose there exists
  $t_1> 0$ such that $\bar Q(t_1)/\lambda>N_F$. Let $t_0=\sup\{s:\bar
  Q(s)/\lambda\le N_F,s\le t_1\}$. So we have that $\lim_{t\to t_0}\bar
  Q(t)/\lambda\le N_F$, since $\bar Q(\cdot)$ has left limit. Let
  $\delta=(Q(t_1)/\lambda-N_F)/4$ and pick $t_\delta\in[t_0-\delta,t_0]$
  such that $\bar Q(t_\delta)/\lambda\le N_F+\delta$.  By
  Lemma~\ref{lem:reg-map--incre},
  \begin{equation}\label{ineq:non-desc-aux}
    \frac{\bar Q(t')}{\lambda}-\frac{\bar Q(t)}{\lambda}\le \int_t^{t'}
    F^c(F_d^{-1}(\frac{\bar Q(s)}{\lambda}))ds
  \end{equation}
  for any $t<t'$.  This gives that
  \begin{align*}
    \frac{\bar Q(t_1)}{\lambda}
    &\le \frac{\bar Q(t_\delta)}{\lambda}
    +\int_{t_\delta}^{t_1}
    [1-F(F_d^{-1}(\frac{\bar Q(s)}{\lambda}))]ds\\
    &\le N_F+\delta+\int_{t_\delta}^{t_0}1 ds+\int_{t_0}^{t_1}0ds\\
    &\le N_F+2\delta<\frac{\bar Q(t_1)}{\lambda},
  \end{align*}
  which is a contradiction. This proves the claim.
  Let
  \begin{align*}
    \bar Z(t)&=\min(\bar X(t),1),\\
    \bar R(t)&=\lambda F_d^{-1}(\frac{\bar Q(t)}{\lambda}),\\
    \bar B(t)&=\lambda t-\bar R(t),
  \end{align*}
  for all $t\ge 0$.  
  %---------- claim 2 ----------
  Next, we claim that the process $\bar B(\cdot)$ is
  non-decreasing.  To prove this claim, it is enough show that
  \begin{equation}\label{eq:B-non-decreasing-aux-1}
    F_d^{-1}(\frac{\bar Q(t')}{\lambda})-
    F_d^{-1}(\frac{\bar Q(t)}{\lambda})
    \le t'-t
  \end{equation}
  for any $t\le t'$.
  %-----------------
  Since $F_d^{-1}$ is a non-decreasing function, the inequality holds
  trivially when $\bar Q(t')\le \bar Q(t)$. We now focus on the case
  where $\bar Q(t')>\bar Q(t)$.  Note that the function
  $F_d^{-1}(\cdot)$ is convex, since the derivative is
  non-decreasing. This together with
  \eqref{ineq:non-desc-aux} implies that 
  \begin{equation*}
    F_d^{-1}(\frac{\bar Q(t')}{\lambda})
    \le F_d^{-1}(\frac{\bar Q(t)}{\lambda})+
    {F_d^{-1}}'(\frac{\bar Q(t)}{\lambda})
    \int_t^{t'}F^c(F_d^{-1}(\frac{\bar Q(s)}{\lambda}))ds.
  \end{equation*}
  If $\bar Q(t)\le \bar Q(s)$ for all $s\in[t,t']$, then due to the
  fact that $F^c(F_d^{-1}(\cdot))$ is non-increasing, we have
  \begin{align*}
    F_d^{-1}(\frac{\bar Q(t')}{\lambda})&\le F_d^{-1}(\frac{\bar Q(t)}{\lambda})+
    \frac{1}{F^c(F_d^{-1}(\frac{\bar Q(t)}{\lambda}))}
    F^c(F_d^{-1}(\frac{\bar Q(t)}{\lambda}))(t'-t)\\
    &\le F_d^{-1}(\frac{\bar Q(t)}{\lambda})+ t'-t,
  \end{align*}
  which gives \eqref{eq:B-non-decreasing-aux-1};
  otherwise, let $t^*\in(t,t')$ be the point where $\bar Q(\cdot)$
  achieves minimum. Since $\bar Q(t)>\bar Q(t^*)$, we have
  \begin{equation*}
    F_d^{-1}(\frac{\bar Q(t^*)}{\lambda})-
    F_d^{-1}(\frac{\bar Q(t)}{\lambda})
    \le t^*-t.
  \end{equation*}
  Since $\bar Q(t^*)\le \bar Q(s)$ for all $s\in[t^*,t']$, by the same
  reasoning in the above, we also have
  \begin{equation*}
    F_d^{-1}(\frac{\bar Q(t')}{\lambda}
    \le F_d^{-1}(\frac{\bar Q(t^*)}{\lambda})+t'-t^*.    
  \end{equation*}
  The above two inequalities also leads to
  \eqref{eq:B-non-decreasing-aux-1}. So the claim is proved.
  %-----------------
  %----------- end of claim ------------
  We now construct a fluid model solution by letting
  \begin{align*}
    \bar\rbuf(t)(C_x)&=\lambda\int_{t-\frac{\bar R(t)}{\lambda}}^{t}F^c(x+t-s)ds,\\
    \bar\ser(t)(C_x)&=\bar\ser_0(C_x+t)+\int_0^tF^c(\frac{\bar
      R(s)}{\lambda})G^c(x+t-s)d\bar B(s),
  \end{align*}
  for all $t\ge 0$.  It is clear that the above defined
  $(\bar\rbuf(\cdot),\bar\ser(\cdot))$ satisfies the fluid dynamic
  equations \eqref{eq:fluid-dyn-eqn-B} and \eqref{eq:fluid-dyn-eqn-S}
  and constraints \eqref{eq:fluid-constr-B} and
  \eqref{eq:fluid-constr-S}.  So we conclude that
  $(\bar\rbuf(\cdot),\bar\ser(\cdot))$ is a fluid model solution.

  It now remains to show the uniqueness.  Suppose there is another
  solution to the fluid model $\fm$ with initial condition
  $(\bar\rbuf_0,\bar\ser_0)$, denoted by
  $(\bar\rbuf^\dagger(\cdot),\bar\ser^\dagger(\cdot))$. Similarly,
  denote
  \begin{align*}
    \bar R^\dagger(t)&=\bar\rbuf^\dagger(\R),\\
    \bar Z^\dagger(t)&=\bar\ser^\dagger((0,\infty)),
  \end{align*}
  for all $t\ge 0$.  It must satisfy the fluid dynamic equations
  \eqref{eq:fluid-dyn-eqn-B} and \eqref{eq:fluid-dyn-eqn-S} and
  constraints \eqref{eq:fluid-constr-B} and \eqref{eq:fluid-constr-S}.
  For all $t\ge 0$, let
  \[
    \bar Q^\dagger(t)=\lambda\bar F_d(\frac{\bar R^\dagger(t)}{\lambda}).
  \]
  According to the algebra at the beginning of
  Section~\ref{subsec:exist-uniq}, $\bar X^\dagger(\cdot)$ must also
  satisfy equation \eqref{eq:fluid-key}.  By the uniqueness of the
  solution to the equation \eqref{eq:fluid-key} in
  Lemma~\ref{reg-map},
  \[\bar X^\dagger(t)=\bar X(t)\quad\text{for all }t\ge 0.\]
  This implies that $\bar R^\dagger(t)=\bar R(t)$.  By the dynamic
  equations \eqref{eq:fluid-dyn-eqn-B} and \eqref{eq:fluid-dyn-eqn-S},
  we must have that
  \[
  (\bar\rbuf^\dagger(t),\bar\ser^\dagger(t))
  =(\bar\rbuf(t),\bar\ser(t))\quad \text{for all }t\ge 0.
  \]
  This completes the proof.
\end{proof}

\subsection{Equilibrium State of the Fluid Model Solution}
\label{sec:equil-state-fluid}

In this section, we first intuitively explain what an equilibrium
should be. Then we rigorously prove it in Theorem~\ref{thm:eqm}.
To provide some intuition, note that  in the equilibrium, by equation
\eqref{eq:fluid-dyn-eqn-B}, one should have
\[
  \bar \rbuf_\infty(C_x)={\lambda}\int_0^{\bar R_\infty/\lambda}F^c(x+s)ds,
\]
for the buffer. This immediately implies that
\[
\bar\rbuf_\infty(C_x)=\lambda[F_d(x+\frac{\bar R_\infty}\lambda)-F_d(x)].
\]
So the rate at which customers leave the buffer due to abandonment is:
\begin{align*}
  \lim_{x\to 0}\frac{\bar \rbuf_\infty(C_0)-\bar
    \rbuf_\infty(C_x)}{x} =\lambda F(\frac{\bar R_\infty}\lambda).
\end{align*}
In the equilibrium, intuitively, the number of customers in service
should not change and the distribution for the remaining service time
should be the equilibrium distribution $G_e(\cdot)$, i.e.
\[\bar \ser_\infty(C_x)=\bar Z_\infty [1-G_e(x)].\]
The rate at which customers depart from the server is:
\begin{align*}
  \lim_{x\to 0}\frac{\bar \ser_\infty(C_0)-\bar
    \ser_\infty(C_x)}{x} =\bar Z_\infty\mu.
\end{align*}
The arrival rate must be equal to the summation of the departure rate
from server (due to service completion) and the one from buffer (due
to abandonment), i.e.
\begin{equation}\label{eq:tech-4-1}
\lambda=\lambda F(\frac{\bar R_\infty}\lambda)+\bar Z_\infty\mu.
\end{equation}
It follows directly from \eqref{eq:fluid-buffer} that
\begin{equation}\label{eq:tech-4-2}
\bar Q_\infty=\lambda F_d(\frac{\bar R_\infty}{\lambda}).
\end{equation}
If $\bar R_\infty>0$, then according to \eqref{eq:tech-4-2} we have
$\bar Q_\infty>0$. Thus $\bar Z_\infty=1$ according to policy
constraints. By \eqref{eq:tech-4-1}, $\rho>1$ and $\frac{\bar
  R_\infty}{\lambda}$ is a solution to the equation
$F(w)=\frac{\rho-1}{\rho}$.  If $\bar R_\infty=0$, then according to
\eqref{eq:tech-4-1} we have $\rho=\bar Z_\infty\le 1$. In summary, we
have that
\begin{align*}
  \bar Q_\infty&=\lambda F_d(w),\\
  \bar Z_\infty&=\min(\rho,1),
\end{align*}
where $w$ is a solution to the equation
$F(w)=\max(\frac{\rho-1}{\rho},0)$.  This is consistent with the one
in \cite{Whitt2006}, which is derived from a conjecture of a fluid
model. Now, we rigorously prove this result.

\begin{proof}[Proof of Theorem~\ref{thm:eqm}]
  If $(\bar\rbuf_\infty,\bar\ser_\infty)$ is an equilibrium state,
  then according to the definition, it must satisfies
  \begin{align}
    \label{eq:tech-3-eqm-B}\bar\rbuf_\infty(C_x)
    &=\lambda\int_{t-\frac{\bar R_\infty}{\lambda}}^{t}F^c(x+t-s)ds,
    \quad t\ge 0,\\
    \label{eq:tech-3-eqm-S}\bar\ser_\infty(C_x)
    &=\bar\ser_\infty(C_x+t) +\int_0^tF^c(\frac{\bar
      R_\infty}{\lambda})G^c(x+t-s)d\lambda s,
    \quad t\ge 0.
  \end{align}
  It follows from \eqref{eq:tech-3-eqm-S} that
  \begin{align*}
    \bar\ser_\infty(C_x)-\bar\ser_\infty(C_x+t) &=\rho F^c(\frac{\bar
      R_\infty}{\lambda})\mu\int_0^tG^c(x+t-s)ds\\
    &=\rho F^c(\frac{\bar R_\infty}{\lambda})[G_e(x+t)-G_e(x)], \quad
    t\ge 0.
  \end{align*}
  Taking $t\to\infty$, one has
  \begin{equation}\label{eq:tech-4-ser-infty}
    \bar\ser_\infty(C_x)=\rho F^c(\frac{\bar R_\infty}{\lambda})G_e^c(x).
  \end{equation}
  Thus $\bar Z_\infty=\rho F^c(\frac{\bar R_\infty}{\lambda})$.
  According to \eqref{eq:fluid-buffer}, we have that
  \[
  \bar Q_\infty=\lambda F_d(\frac{\bar R_\infty}{\lambda}).
  \]
  First assume that $\bar R_\infty>0$. Then $\bar Q_\infty>0$, and
  thus $\bar Z_\infty=1$ by the policy constraints
  \eqref{eq:fluid-constr-B} and \eqref{eq:fluid-constr-S}. Therefore,
  $\rho F^c(\frac{\bar R_\infty}{\lambda})=1$, which implies that
  $F(\frac{\bar R_\infty}{\lambda})=\frac{\rho-1}{\rho}$ and $\rho>1$.
  Now assume that $\bar R_\infty=0$. Then $\bar Z_\infty=\rho$, which
  must be less than or equal to 1 by the policy
  constraints. Summarizing the cases where $\rho>1$ and $\rho\le 1$,
  we have that the equilibrium state must satisfy
  \eqref{eq:eqm-B}--\eqref{eq:eqm-w}.

  If a state $(\bar\rbuf_\infty,\bar\ser_\infty)$ satisfies
  \eqref{eq:eqm-B}--\eqref{eq:eqm-w}, then let
  \begin{align*}
    (\bar\rbuf(t),\bar\ser(t))=(\bar\rbuf_\infty, \bar\ser_\infty),
  \end{align*}
  for all $t\ge 0$.  If $\rho\le 1$, then $\bar\rbuf(\cdot)\equiv\0$
  and $\bar Z(\cdot)\equiv\rho$;
% So we have that
%   \begin{align*}
%     \bar\rbuf(t)(C_x)&=0,\\
%     \bar\ser(t)(C_x)&=\bar Z_\infty[1-G_e(x)]\\
%     &=\bar\ser_\infty(C_x+t)+\int_0^tG^c(x+t-s)d\lambda s,
%   \end{align*}
%   for all $t\ge 0$.
  if $\rho> 1$, then $\bar R(\cdot)\equiv\lambda w$ and $\bar
  Z(\cdot)\equiv 1$, where $w$ is a solution to equation
  \eqref{eq:eqm-w}.  It is easy to check that
  $(\bar\rbuf(\cdot),\bar\ser(\cdot))$ is a fluid model solution in
  both cases.  So by definition, the state
  $(\bar\rbuf_\infty,\bar\ser_\infty)$ is a equilibrium state.
\end{proof}

\section{Fluid Approximation of the Stochastic Models}
\label{sec:fluid-appr-stoch}

Similar to \eqref{eq:B(t)}, let
\begin{equation}\label{eq:def-B}
B^n(t)=E^n(t)-R^n(t).
\end{equation}
It follows from \eqref{eq:stoc-dyn-eqn-B} and
\eqref{eq:stoc-dyn-eqn-S} that the dynamics for the fluid scaled
processes can be written as
\begin{align}
  \label{eq:stoc-dyn-eqn-B-fs}
  \fs\rbuf(t)(C)&=\frac{1}{n}\sum_{i=B^n(t)+1}^{E^n(t)}\delta_{u^n_i}(C+t-a^n_i),
  \quad\textrm{for all }C\in\mathscr B(\R),\\
  \label{eq:stoc-dyn-eqn-S-fs}
  \begin{split}
    \fs\ser(t)(C)&=\fs\ser(s)(C+t-s)\\
      &\quad+\frac{1}{n}\sum_{i=B^n(s)+1}^{B^n(t)}
       \delta_{(u^n_i,v^n_i)}(C_0+\tau^n_i-a^n_i)\times(C+t-\tau^n_i),
  \end{split}
  \quad\textrm{for all }C\in\mathscr B((0,\infty)),
\end{align}
for all $0\le s\le t$.

\subsection{Precompactness}

We first establish the following precompactness for the sequence of
fluid scaled stochastic processes
$\{(\fs\rbuf(\cdot),\fs\ser(\cdot))\}$.

\begin{thm}\label{thm:procompactness}
  Assume \eqref{eq:cond-A}--\eqref{eq:cond-initial-noatom}.  The
  sequence of the fluid scaled stochastic processes
  $\{(\fs\rbuf(\cdot),\fs\ser(\cdot))\}_{N\in\N}$ is precompact as
  $n\to\infty$; namely, for each subsequence
  $\{(\bar\rbuf^{n_k}(\cdot),\bar\ser^{n_k}(\cdot))\}_{n_k}$ with
  $n_k\to\infty$, there exists a further subsequence
  $\{(\bar\rbuf^{n_{k_j}}(\cdot),\bar\ser^{n_{k_j}}(\cdot))\}_{n_{k_j}}$
  such that
  \[
  (\bar\rbuf^{n_{k_j}}(\cdot),\bar\ser^{n_{k_j}}(\cdot))
  \dto(\tilde\rbuf(\cdot),\tilde\ser(\cdot))\quad\text{as }j\to\infty,
  \]
  for some
  $(\tilde\rbuf(\cdot),\tilde\ser(\cdot))\in\D([0,\infty),\M\times\M_+)$.
\end{thm}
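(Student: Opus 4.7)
The plan is to apply a standard precompactness criterion for $\D([0,\infty),\M\times\M_+)$ under the Skorohod $J_1$ topology (for instance Theorem~3.7.2 in Ethier and Kurtz). For each fixed $T>0$, it suffices to verify (i) \emph{compact containment}: the family $\{(\fs\rbuf(t),\fs\ser(t)):n\in\N,\,t\in[0,T]\}$ lies in a compact subset of $\M\times\M_+$ with probability approaching one, and (ii) \emph{modulus of continuity}: the oscillations $\osc{\fs\rbuf}{\delta}{T}$ and $\osc{\fs\ser}{\delta}{T}$, measured in the Prohorov metric $\pov$, tend to $0$ in probability as $\delta\to0$.

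For (i), by Prohorov's theorem relative compactness in $\M$ amounts to bounded total mass and tight supports. Total mass is immediate: $\fs\ser(t)(\R)\le1$ since $Z^n(t)\le n$, and $\fs\rbuf(t)(\R)=\bar R^n(t)\le \fs E(t)+\bar R^n(0)$ using $B^n(t)\ge B^n(0)=-R^n(0)$, which is bounded in probability on $[0,T]$ by \eqref{eq:cond-A} and \eqref{eq:cond-initial}. For tightness of supports, the representation \eqref{eq:stoc-dyn-eqn-B-fs} places a point mass of $\fs\rbuf(t)$ at $u^n_i-(t-a^n_i)$; its upper tail is dominated by the tail of $\nu^n_F\to\nu_F$ applied to $u^n_i$, while mass in $(-\infty,-M]$ requires $t-a^n_i\ge M$, i.e.\ $B^n(t)<E^n(t-M)$, which is ruled out once $M$ exceeds the in-probability uniform bound on $\bar R^n$. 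Tightness of $\fs\ser(t)$'s support follows similarly, from the tail of $\nu^n_G$ combined with tightness of $\fs\ser(0)$ secured by \eqref{eq:cond-initial}.

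For (ii), the plan is to exploit the explicit dynamic equations. For $0\le s\le t\le T$, the measure $\fs\rbuf(t)$ is obtained from $\fs\rbuf(s)$ by (a) left-shifting its support by $t-s$, (b) removing the point masses for customers $i\in(B^n(s),B^n(t)]$ entering service, and (c) appending point masses for the new arrivals $i\in(E^n(s),E^n(t)]$, which yields
\begin{equation*}
\pov[\fs\rbuf(s),\fs\rbuf(t)]\le (t-s)+\frac{B^n(t)-B^n(s)}{n}+\frac{E^n(t)-E^n(s)}{n}.
\end{equation*}
An analogous estimate from \eqref{eq:stoc-dyn-eqn-S-fs} bounds $\pov[\fs\ser(s),\fs\ser(t)]$ by $(t-s)$ plus $(B^n(t)-B^n(s))/n$ plus the mass $\fs\ser(s)((0,t-s])$ that finishes service during $(s,t]$. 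The arrival increment is controlled uniformly by \eqref{eq:cond-A}, and $(B^n(t)-B^n(s))/n\le (E^n(t)-E^n(s))/n+(R^n(s)-R^n(t))^+/n$ reduces matters to controlling the oscillation of $\bar R^n$ and the small-interval mass terms.

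The principal obstacle is the uniform-in-$s$ smallness of $\fs\ser(s)((0,\delta])$, together with the analogous abandonment-related quantity $\fs\rbuf(s)([0,\delta])$ that arises in the $\bar R^n$ oscillation bound. Decomposing $\fs\ser(s)((0,\delta])$ via \eqref{eq:stoc-dyn-eqn-S-fs} yields (1) a shifted initial contribution, controlled by the no-atom hypothesis on $\bar\ser_0$ in \eqref{eq:cond-initial-noatom} together with the weak convergence of $\fs\ser(0)$, and (2) a contribution from customers who entered service during $(0,s]$ whose residual service time lies in $(0,\delta]$; this second term is handled by the continuity of $G$ from \eqref{cond:G-no-atom} and the convergence $\nu^n_G\to\nu_G$, which ensure that fresh service times fall into any short window only with small aggregate fluid mass, uniformly in $s$. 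A parallel argument using \eqref{cond:fl-3-opt} or \eqref{cond:fl-3} dispatches the patience side. Assembling these estimates makes both oscillations smaller than any prescribed threshold with probability tending to one, and precompactness of $\{(\fs\rbuf(\cdot),\fs\ser(\cdot))\}$ follows.
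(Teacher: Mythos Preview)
Your overall strategy---verify compact containment and an oscillation bound, then invoke Theorem~3.7.2 of Ethier--Kurtz---matches the paper, and your sketch of compact containment is essentially correct. The gap is in the oscillation argument, specifically in how you propose to control the increment $\fs B(s,t)=(B^n(t)-B^n(s))/n$.

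The displayed inequality $\fs B(s,t)\le \fs E(s,t)+(\bar R^n(s)-\bar R^n(t))^+$ is just a rewriting of the identity $B^n=E^n-R^n$ and carries no information: substituting $R^n=E^n-B^n$ back in shows the right-hand side equals $\fs B(s,t)$ whenever $\bar R^n(s)\ge\bar R^n(t)$. You then propose to control $(\bar R^n(s)-\bar R^n(t))^+$ via the abandonment mass $\fs\rbuf(s)([0,\delta])$, but this misreads the model: abandonment does \emph{not} remove a customer from the virtual buffer. By construction, an abandoned customer remains in $\rbuf$ (with nonpositive remaining patience) until the moment a server slot would have been theirs; the decrements of $R^n$ are driven entirely by service availability, not by patience expiry. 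Consequently $\fs\rbuf(s)([0,\delta])$---the mass of customers about to abandon---has no direct bearing on the downward oscillation of $\bar R^n$, and the Lipschitz/hazard assumptions \eqref{cond:fl-3-opt}--\eqref{cond:fl-3} on $F$ that you invoke are neither needed nor useful here. (The paper's precompactness argument uses only the continuity of $G$.)

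The paper breaks the circularity by bounding $\fs B(s,t)$ directly through the server side. It first proves an \emph{asymptotic regularity} lemma: for every $\epsilon,\eta>0$ there is $\kappa>0$ with
\[
\liminf_{n\to\infty}\mathbb P^n\Bigl(\sup_{t\in[0,T]}\sup_{x\ge0}\fs\ser(t)([x,x+\kappa])\le\epsilon\Bigr)\ge 1-\eta,
\]
using continuity of $G$ together with \eqref{eq:cond-initial-noatom}. This gives uniform control on service completions in any short window, and hence on the number of server slots that open, yielding $\fs B(s,t)\le \fs E(s,t)+\fs\ser(s)((0,t-s])$ on a high-probability event. From that single bound the Prohorov oscillation estimates for both $\fs\rbuf$ and $\fs\ser$ follow along the lines you outlined; no companion small-interval estimate for $\fs\rbuf$ is ever required.
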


The remaining of this section is devoted to proving the above theorem.
By Theorem~$3.7.2$ in \cite{EthierKurtz1986}, it suffices to verify
($a$) the compact containment property, Lemma~\ref{lem:comp-cont} and
($b$) the oscillation bound, Lemma~\ref{lem:osc-bound} below.

\subsubsection{Compact Containment}

A set $\mathbf K\subset\M$ is relatively compact if
$\sup_{\xi\in\mathbf K}\xi(\R)<\infty$, and there exists a sequence of
nested compact sets $A_j\subset \R$ such that $\cup A_j=\R$ and
\[\lim_{j\to\infty}\sup_{\xi\in\mathbf K}\xi(A_j^c)=0,\]
where $A_j^c$ denotes the complement of $A_j$; see
\cite{Kallenberg1986}, Theorem~A$7.5$.  The first major step to prove
Theorem~\ref{thm:procompactness} is to establish the following
\emph{compact containment} property.

\begin{lem}\label{lem:comp-cont}
  Assume \eqref{eq:cond-A}--\eqref{eq:cond-initial-noatom}.
  Fix $T>0$.  For each $\eta>0$ there exists a compact set $\mathbf
  K\subset\M$ such that
  \begin{equation}\nonumber
    \liminf_{n\to\infty}\prr{(\fs\rbuf(t),\fs\ser(t))
      \in \mathbf K\times\mathbf K\text{ for all }t\in[0,T]}\ge 1-\eta.
  \end{equation}
\end{lem}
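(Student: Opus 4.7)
The plan is to invoke the compactness criterion quoted just before the lemma (Theorem~A7.5 of \cite{Kallenberg1986}): a subset of $\M$ is relatively compact iff it has uniformly bounded total mass and admits, for every $\epsilon>0$, a compact subset of $\R$ carrying all but $\epsilon$ of each measure. I will exhibit a single compact $\mathbf{K}\subset\M$ that contains both $\fs\rbuf(t)$ and $\fs\ser(t)$ for every $t\in[0,T]$ with $\mathbb{P}^n$-probability at least $1-\eta$ for all sufficiently large $n$; then $\mathbf{K}\times\mathbf{K}$ is the desired product.

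For the total mass, \eqref{eq:constr-S} gives $\fs Z(t)\le 1$ deterministically, while the flow balance $R^n(t)\le R^n(0)+E^n(t)$ yields $\fs R(t)\le \fs R(0)+\fs E(t)$, which is uniformly bounded on $[0,T]$ with arbitrarily high probability by \eqref{eq:cond-A} and \eqref{eq:cond-initial}. For the $+\infty$ tail of $\fs\rbuf(t)$, equation \eqref{eq:stoc-dyn-eqn-B-fs} places the atom of customer $i$ at $u_i^n-(t-a_i^n)$, so it lies in $C_L$ only if $u_i^n>L$; thus
\[
\fs\rbuf(t)(C_L)\le \fs\rbuf(0)(C_L)+\frac{1}{n}\#\{1\le i\le E^n(T):u_i^n>L\},
\]
and the right-hand side is uniformly small for large $L$ by the tightness of $\fs\rbuf(0)$ together with the Glivenko--Cantelli-type input supplied by \eqref{eq:cond-A}--\eqref{eq:cond-measures}. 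An analogous bookkeeping on \eqref{eq:stoc-dyn-eqn-S-fs}, separating customers already in service at time $0$ (whose atom in $C_L$ forces the initial remaining service to exceed $L$) from customers that began service during $[0,T]$ (whose atom in $C_L$ forces the service time $v_i^n>L$), handles $\fs\ser(t)(C_L)$; the negative tail of $\fs\ser(t)$ is trivially zero since $\fs\ser(t)\in\M_+$.

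The key observation controlling the negative tail of $\fs\rbuf(t)$ is that a customer contributing to $\fs\rbuf(t)((-\infty,-L])$ must have been in the virtual buffer strictly longer than $L$. For an arrival ($i\ge 1$), $a_i^n\ge 0$ bounds the waiting time by $t\le T$, so once $L>T$ no arrival can contribute at all. For an initial customer ($i\le 0$), the atom sits at (initial remaining patience)$-t$, which lies in $(-\infty,-L]$ only if the initial remaining patience is at most $t-L\le T-L$; this contribution is therefore bounded by $\fs\rbuf(0)((-\infty,T-L])$, which vanishes as $L\to\infty$ by the tightness of the limiting initial measure in \eqref{eq:cond-initial}.

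To assemble, I first fix $M$ so that the total-mass bound fails with probability less than $\eta/2$, then for each $j\in\N$ choose $L_j$ large enough that a union bound over the four tail estimates above yields probability less than $\eta\,2^{-j-1}$ for the event $\{\sup_{t\le T}[\fs\rbuf(t)([-L_j,L_j]^c)\vee\fs\ser(t)([-L_j,L_j]^c)]>1/j\}$. The closed set $\mathbf{K}:=\{\xi\in\M:\xi(\R)\le M,\ \xi([-L_j,L_j]^c)\le 1/j\text{ for every }j\in\N\}$ is compact by the Kallenberg criterion, and on the intersection of these good events, of total probability at least $1-\eta$, the pair $(\fs\rbuf(t),\fs\ser(t))\in\mathbf{K}\times\mathbf{K}$ for every $t\in[0,T]$. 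The nontrivial bit of the argument is the negative-tail observation, which avoids having to bound head-of-buffer waiting times via a fluid-inverse of the arrival process and replaces that estimate with the trivial fact that any arrival in $[0,T]$ has waited at most $T$.
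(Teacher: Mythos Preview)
Your proposal is correct and follows essentially the same route as the paper: total mass is bounded via $\fs Z(t)\le 1$ and $\fs R(t)\le\fs R(0)+\fs E(T)$; positive tails of both measures are controlled by splitting into initial customers and post-$0$ contributions, the latter handled by the Glivenko--Cantelli estimate (the paper packages this via an envelope function $\bar f$ and Markov's inequality rather than a sequence $L_j$, but the content is the same); and the negative tail of $\fs\rbuf(t)$ is bounded by $\fs\rbuf(0)((-\infty,T-L])$ using exactly the observation you highlight, namely that arrivals in $(0,T]$ cannot have waited more than $T$. The only cosmetic difference is that the paper writes down the compact set $\mathbf{K}$ explicitly in terms of the functions $\kappa_0(\cdot)$ and $M_b/\bar f(\cdot)$, whereas you assemble it via a $1/j$-sequence and a union bound; both are standard implementations of the Kallenberg criterion. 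Your closing remark that the negative-tail trick ``avoids'' a harder fluid-inverse argument slightly overstates its novelty---the paper uses precisely this shortcut as well.
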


To prove this result, we first need to establish some bound
estimations. For the convenience of notation, denote $\fs E(s,t)=\fs
E(t)-\fs E(s)$ for any $0\le s\le t$. Fix $T>0$.  It follows
immediately from condition \eqref{eq:cond-A} that for each
$\epsilon>0$ there exists an $n_0$ such that when $n>n_0$,
\begin{equation}\label{ineq:arrival-bound}
  \prr{\sup_{0\le s<t\le T}|\fs E(s,t)-\lambda(t-s)|<\epsilon}\ge 1-\epsilon.
\end{equation}
To facilitate some arguments later on, we derive the following result
from the above inequality.

\begin{lem}\label{lem:epsilon-function}
  Fix $T>0$. There exists a function $\epsilon_E(\cdot)$, with
  $\lim_{n\to\infty}\epsilon_E(n)=0$ such that
  \begin{equation}\nonumber
    \prr{\sup_{0\le s<t\le T}|\fs E(s,t)-\lambda(t-s)|<\epsilon_E(n)}
    \ge 1-\epsilon_E(n),
  \end{equation}
  for each $n\ge 0$.
\end{lem}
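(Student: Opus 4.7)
The plan is a straightforward diagonalization argument extracting a uniform rate from the convergence in probability already expressed by (ineq:arrival-bound). Nothing in the hypothesis is strengthened; we only repackage it in a form that makes a single error term $\epsilon_E(n)$ depend on $n$ in a way that vanishes as $n\to\infty$.

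Concretely, I would proceed as follows. For each integer $k\ge 1$, apply (ineq:arrival-bound) with $\epsilon = 1/k$ to obtain an index $n_k$ such that, for every $n\ge n_k$,
\[
\mathbb{P}^n\Bigl(\sup_{0\le s<t\le T}|\bar E^n(s,t)-\lambda(t-s)|<1/k\Bigr)\ge 1-\frac{1}{k}.
\]
Without loss of generality I may arrange $n_1<n_2<\cdots$ by passing to a larger choice at each step. Now define the function $\epsilon_E:\N\to(0,\infty)$ by $\epsilon_E(n)=1$ for $n<n_1$ (for which the claimed inequality is vacuously true since probabilities are at least $0 = 1-1$) and $\epsilon_E(n)=1/k$ whenever $n_k\le n<n_{k+1}$. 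By construction $\epsilon_E(n)\to 0$ as $n\to\infty$, and the inequality in the lemma holds for every $n\ge 0$ because for $n\in[n_k,n_{k+1})$ the inclusion of events ``$\sup<1/k$'' into ``$\sup<\epsilon_E(n)$'' is an equality, while the probability lower bound $1-1/k = 1-\epsilon_E(n)$ is exactly what we just established.

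There is no genuine obstacle in this lemma; the content is purely bookkeeping to convert the quantifier ordering ``for all $\epsilon$ there exists $n_0$'' into the more convenient form of a single $n$-dependent tolerance. The utility will become clear only later, when $\epsilon_E(n)$ can be inserted uniformly into oscillation estimates for $\bar E^n$ without having to track separate parameters for every application.
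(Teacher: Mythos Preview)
Your proposal is correct and is exactly the standard diagonalization argument one uses to convert ``for every $\epsilon$ there exists $n_0$'' into a single vanishing tolerance $\epsilon_E(n)$; the paper itself omits the proof, citing the identical argument in Lemma~5.1 of \cite{ZDZ2009}, so your approach matches what the paper has in mind.
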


The derivation of the above lemma from \eqref{ineq:arrival-bound}
follows the same as the proof of Lemma~$5.1$ in \cite{ZDZ2009}. We
omit the proof for brevity.  Based on the above lemma, we construct
the following event,
\begin{equation}\label{eq:omegaE-def}
  \Omega^n_E=\{\sup_{t\in[0,T]}|\fs E(s,t)-\lambda (t-s)|<\epsilon_E(n)\}.
\end{equation}
We have that on this event, the arrival process is regular,
i.e. $\fs E(s,t)$ is ``close'' to $\lambda(t-s)$.
And this event has ``large'' probability, i.e.
\begin{equation}\label{eq:omegaE}
  \lim_{n\to\infty}\prr{\Omega^n_E}=1.
\end{equation}

\begin{proof}[Proof of Lemma~\ref{lem:comp-cont}]
  By the convergence of the initial condition \eqref{eq:cond-initial},
  for any $\epsilon>0$, there exists a relatively compact set $\mathbf
  K_0\subset \M$ such that
  \begin{equation}\label{eq:tech-5-init-compact-prob}
      \liminf_{n\to\infty}\prr{\fs\rbuf(0)\in\mathbf K_0
        \textrm{ and }\fs\ser(0)\in\mathbf K_0}>1-\epsilon.
  \end{equation}
  Denote the event in the above probability by $\Omega^n_0$. On this
  event, by the definition of relatively compact set in the space
  $\M$, there exists a function $\kappa_0(\cdot)$ with
  $\lim_{x\to\infty}\kappa_0(x)=0$ such that
  \begin{equation}\label{eq:tech-5-init-0}
    \fs\rbuf(0)(C_x)\le \kappa_0(x),\quad\fs\ser(0)(C_x)\le \kappa_0(x),
  \end{equation}
  and
  \begin{equation}\label{eq:tech-5-init-1}
    \fs\rbuf(0)(C^-_{x})\le \kappa_0(x),
  \end{equation}
  for all $x\ge 0$, where $C^-_x=(-\infty, -x)$ for any
  $y\in\R$. (Remember that $\fs\ser(0)$ is a measure on $(0,\infty)$,
  so we do not need to consider its measure of $C^-_x$.)
  %----------------------------------------------
  It is clear that on the event $\Omega^n_E\cap\Omega^n_0$, for any
  $t\le T$ and all large $n$,
  \begin{align*}
    \fs\rbuf(t)(\R)&\le\sup_n\fs\rbuf(0)(\R)+2\lambda T,\\
    \fs\ser(t)((0,\infty))&\le 1,
  \end{align*}
  where the last inequality is due to the fact that $Z^n(\cdot)\le n$.
  Again, by the definition of relative compact set in $\M$, we have
  that $\sup_n\fs\rbuf(0)(\R)=M_0<\infty$.
  %----------------------------------------------
  It follows from the dynamic equation \eqref{eq:stoc-dyn-eqn-B-fs}
  and \eqref{eq:stoc-dyn-eqn-S-fs} that for all $x>0$,
  \begin{align*}
    \fs\rbuf(t)(C_x)&\le\fs\rbuf(0)(C_x)
        +\frac{1}{n}\sum_{i=1}^{E^n(t)}\delta_{u^n_i}(C_x),\\
    \fs\ser(t)(C_x)&\le\fs\ser(0)(C_x)
        +\frac{1}{n}\sum_{i=1}^{E^n(t)}\delta_{v^n_i}(C_x).
  \end{align*}
  Denote $\fs{\mathcal L}_1(t)=\frac{1}{n}\sum_{i=1}^{E^n(t)}
  \delta_{u^n_i}$ and $\fs{\mathcal L}_2(t)=\frac{1}{n}\sum_{i=1}^{E^n(t)}
  \delta_{v^n_i}$. Let us first study these two terms.  Recall the
  definition of the event $\Omega^n_{\text{GC}}(M,L)$ and the envelope
  function $\bar f$ (which increases to infinity) in
  \eqref{def:omega-GC}.  For the application here, it is enough to set
  $M=1$ and $L=2\lambda T$.  On the event
  $\Omega^n_E\cap\Omega^n_{\text{GC}}(M,L)$, we have
  \begin{align*}
    \inn{\bar f}{\fs{\mathcal L}_1(t)}
    \le\inn{\bar f}{\frac{1}{n}\sum_{i=1}^{2\lambda Tn}\delta_{u^n_i}}
    \le 2\lambda T\inn{\bar f}{\nu_F}+1,
  \end{align*}
  for all large enough $n$.  Similarly, on the same event we have that
  \begin{align*}
    \inn{\bar f}{\fs{\mathcal L}_2(t)}
    \le\inn{\bar f}{\frac{1}{n}\sum_{i=1}^{2\lambda Tn}\delta_{v^n_i}}
    \le 2\lambda T\inn{\bar f}{\nu_G}+1,
  \end{align*}
  for all large enough $n$. Denote $M_B=2\lambda T\max(\inn{\bar
    f}{\nu_F}, \inn{\bar f}{\nu_G})+1$.  By Markov's inequality, for
  all $x> 0$ (again, on the same event and for all large $n$)
  \[
  \fs{\mathcal L}_1(t)(C_x)<M_b/\bar f(x), \quad
  \fs{\mathcal L}_2(t)(C_x)<M_b/\bar f(x).
  \]
  %------------------------------------------------------------
  Unlike the measure $\ser(t)\in\M_+$, the measure $\rbuf(t)\in\M$.
  So we need to consider all the test set $C^-_x=(-\infty, -x)$ for
  $x\ge 0$. The following inequality again follows from
  \eqref{eq:stoc-dyn-eqn-B-fs},
  \begin{equation}\nonumber
    \fs\rbuf(t)(C^-_x)\le\fs\rbuf(0)(C^-_x+t)+
    \frac{1}{n}\sum_{i=1}^{E^n(t)}\delta_{u^n_i}(C^-_x+t).
  \end{equation}
  Note that if we take $x>T$, then $\delta_{u^n_i}(C^-_x+t)=0$. So we
  have that
  \begin{equation}
    \fs\rbuf(t)(C^-_x)\le \fs\rbuf(0)(C^-_x+T)=\fs\rbuf(0)(C^-_{x-T}),
    \quad\textrm{for all }t\le T.
  \end{equation}
  %----------------------------------------------
  Now, define the set $\mathbf K\subset \M$ by
  \begin{equation*}
    \begin{split}
      \mathbf K=\Big\{\xi\in\M: &\xi(\R)<1+M_0+2\lambda T,\\
          &\xi(C_x)<\kappa_0(x)+M_b/\bar f(x) \textrm{ for all }x>0,\\
          &\xi(C^-_x)\le\kappa_0(x-T) \textrm{ for all }x\ge T\Big\}.
    \end{split}
  \end{equation*}
  It is clear that $\mathbf K$ is relatively compact and on the event
  $\Omega^n_E\cap\Omega^n_{\text{GC}}(M,L)\cap\Omega^n_0$,
  \[
    (\fs\rbuf(t),\fs\ser(t)) \in \mathbf K\times\mathbf K
    \text{ for all }t\in[0,T].
  \]
  The result of this lemma then follows immediately from
  \eqref{eq:omegaE}, \eqref{eq:tech-5-init-compact-prob} and
  \eqref{eq:omega-GC}.
\end{proof}

\subsubsection{Oscillation Bound}

The second major step to prove precompactness is to obtain the
oscillation bound in Lemma~\ref{lem:osc-bound} below.  The oscillation
of a \rcll{} function $\zeta(\cdot)$ (taking values in a metric space
$(\mathbf E,\gd)$) on a fixed interval $[0,T]$ is defined as
\begin{equation}\nonumber
  \osc{\zeta}{\delta}{T}=\sup_{s,t\in[0,T],|s-t|<\delta}\gd[\zeta(s),\zeta(t)].
\end{equation}
If the metric space is $\R$, we just use the Euclidean metric; if the
space is $\M$ or $\M_+$, we use the Prohorov metric $\pov$ defined in
Section~\ref{subsec:notatioin}.  For the measure-valued processes in our
model, oscillations mainly result from sudden departures of a large
number of customers.  To control the departure process, we show that
$\fs\ser(\cdot)$ and $\fs\rbuf(\cdot)$ assign arbitrarily small mass
to small intervals.

\begin{lem}\label{lem:asymp-reg}
  Assume \eqref{cond:G-no-atom},
  \eqref{eq:cond-A}--\eqref{eq:cond-initial-noatom}.  Fix $T>0$.  For
  each $\epsilon,\eta>0$ there exists a $\kappa>0$ (depending on
  $\epsilon$ and $\eta$) such that
  \begin{align}
    % &\liminf_{n\to\infty}\prr{\sup_{t\in[0,T]}\sup_{x\in\R_+}
    % \fs\rbuf(t)([x,x+\kappa])\le\epsilon}\ge1-\eta,\\
    &\liminf_{n\to\infty}\prr{\sup_{t\in[0,T]}\sup_{x\in\R_+}
      \fs\ser(t)([x,x+\kappa])\le\epsilon}\ge1-\eta.
  \end{align}
\end{lem}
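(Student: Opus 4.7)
Plan. Using the dynamic equation \eqref{eq:stoc-dyn-eqn-S-fs} with $s=0$, I would begin by decomposing, for $x\ge 0$ and $t\in[0,T]$,
\begin{equation*}
\fs\ser(t)([x,x+\kappa])=\fs\ser(0)([x+t,x+t+\kappa])+\frac{1}{n}\sum_{i=B^n(0)+1}^{B^n(t)}\id{v^n_i\in[x+t-\tau^n_i,\,x+t-\tau^n_i+\kappa]}.
\end{equation*}
The key reinterpretation is that $v^n_i-(t-\tau^n_i)\in[x,x+\kappa]$ is equivalent to the scheduled departure time $\tau^n_i+v^n_i$ lying in $[x+t,x+t+\kappa]$, so the second sum is really the fluid-scaled count of scheduled departures falling in a short time window---exactly the phenomenon behind sudden jumps in $\fs\ser(\cdot)$.

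For the initial part, the weak convergence $\fs\ser(0)\dto\bar\ser_0$ combined with the atom-freeness assumption in \eqref{eq:cond-initial-noatom} makes the distribution function of $\bar\ser_0$ continuous; via a Skorohod coupling plus P\'{o}lya's theorem, the distribution functions of $\fs\ser(0)$ then converge uniformly, so $\sup_{y\ge 0}\fs\ser(0)([y,y+\kappa])$ can be made arbitrarily small in probability by shrinking $\kappa$. Combining this with the tail control supplied by Lemma~\ref{lem:comp-cont} yields a $\kappa_1>0$ such that $\sup_{y\ge 0}\fs\ser(0)([y,y+\kappa_1])<\epsilon/3$ on an event of probability at least $1-\eta/3$, for all large $n$.

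For the ``new'' sum, I plan to exploit that the $v^n_i$'s are i.i.d.\ and independent of all arrival and patience data. Under FIFO, $\tau^n_i$ is measurable with respect to the $\sigma$-algebra generated by arrivals, patience times, and $\{v^n_j:j<i\}$, hence is independent of $v^n_i$. Conditional on $\tau^n_i$, the probability that $v^n_i$ lies in the length-$\kappa$ window is at most $\omega_{G^n}(\kappa):=\sup_{y\ge 0}[G^n(y+\kappa)-G^n(y)]$; the continuity of $G$ together with \eqref{eq:cond-measures} imply $\limsup_n\omega_{G^n}(\kappa)\to 0$ as $\kappa\to 0$. On the event $\Omega^n_E$ the number of summands is $\le 2\lambda Tn$, so the conditional mean of the empirical sum is at most $2\lambda T\,\omega_{G^n}(\kappa)$, and a Hoeffding-type concentration for the bounded martingale differences $\id{v^n_i\in J^n_i}-P^n_i$ pushes the empirical sum within $\epsilon/3$ of its mean with high probability.

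The final and hardest step is to upgrade this pointwise control to the double supremum over $(x,t)\in\R_+\times[0,T]$. I would use a covering argument: discretize $t$ on a $\kappa$-grid in $[0,T]$ via the elementary monotonicity $\fs\ser(t+\delta)([x,x+\kappa])\le\fs\ser(t)([x,x+2\kappa])+(\fs E(t+\delta)-\fs E(t))$ (each remaining service time decreases at unit rate, and at most $\fs E(t,t+\delta)$ new customers enter service during $[t,t+\delta]$), and discretize $x$ on a $\kappa$-grid in $[0,M]$, with $M$ chosen through Lemma~\ref{lem:comp-cont} so that $\sup_{t\in[0,T]}\fs\ser(t)((M,\infty))<\epsilon/3$ w.h.p.---this also absorbs the case $x>M$. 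The main obstacle is precisely this uniformization: matching the independence-based conditional bound with a Glivenko--Cantelli type concentration indexed by the random windows $J^n_i$, which I expect to handle using the envelope-function event $\Omega^n_{\text{GC}}(M,L)$ already employed in the proof of Lemma~\ref{lem:comp-cont}.
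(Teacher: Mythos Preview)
Your decomposition into the initial piece and the ``new'' sum, and your treatment of the initial piece via weak convergence and atom-freeness, match the paper. The difference is in how the second sum is controlled and how the double supremum is obtained.

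The paper does not use the conditional independence of $v^n_i$ from $\tau^n_i$, Hoeffding concentration, or a grid in $(x,t)$. Instead it partitions $[0,t]$ into deterministic sub-intervals $[t_j,t_{j+1}]$ of mesh $<\delta$; for $i$ with $B^n(t_j)<i\le B^n(t_{j+1})$ one has $\tau^n_i\in[t_j,t_{j+1}]$, so the random window $[x,x+\kappa]+t-\tau^n_i$ sits inside the deterministic interval $[x+t-t_{j+1},\,x+t-t_j+\kappa]$ of length at most $\kappa+\delta$. On $\Omega^n_{\text{GC}}(M,L)$ the Glivenko--Cantelli estimate then gives, for each block,
\[
\frac{1}{n}\sum_{i=B^n(t_j)+1}^{B^n(t_{j+1})}\delta_{v^n_i}([\,\cdot\,])\le(\fs B(t_{j+1})-\fs B(t_j))\,\nu^n_G([\,\cdot\,])+\frac{\epsilon}{4J},
\]
and by \eqref{cond:G-no-atom} one chooses $\kappa,\delta$ so that $\nu_G$ of any interval of length $\le\kappa+\delta+2\epsilon_2$ is at most $\epsilon/(4M)$. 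Since the GC event is already uniform over the test class $\{1_{C_x}:x\in\R\}$, and since the bound on $\nu_G$ depends only on the interval \emph{length}, the resulting estimate $S\le\epsilon/2$ is uniform in $x$ and in $t\in[0,T]$ (with $J\le\lceil T/\delta\rceil$) with no further work. This buys you exactly the step you flag as hardest: uniformity comes for free, and no separate discretization of $x$ or union bound is needed.

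Your route via martingale concentration is valid in principle but heavier, and your time-discretization inequality is not correct as stated: the number of service entries in $(t,t+\delta]$ is $\fs B(t,t+\delta)$, not $\fs E(t,t+\delta)$, and the former can exceed the latter whenever the buffer is nonempty and servers free up. The correct bound, used in the paper's oscillation lemma, is $\fs B(t,t+\delta)\le\fs E(t,t+\delta)+\fs\ser(t)((0,\delta])$, so your monotonicity should read
\[
\fs\ser(t+\delta)([x,x+\kappa])\le\fs\ser(t)([x,x+2\kappa])+\fs E(t,t+\delta)+\fs\ser(t)((0,\delta]).
\]
The extra term is of the very type you are bounding, so it can be absorbed, but the inequality as you wrote it is false.
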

\begin{proof}
First, We have that for any $\epsilon, \eta>0$, there exists a
$\kappa$ such that
\begin{equation}\label{ineq:initial-regular}
  \liminf_{n\to\infty}\prr{\sup_{x\in\R_+}\fs\ser(0)([x,x+\kappa])\le\epsilon/2}
  \ge 1-\eta.
\end{equation}
This inequality is derived from the initial condition.  The derivation
is exactly the same as in the proof of (5.14) in \cite{ZDZ2009}, so we
omit it here for brevity.

Now we need to extend this result to the interval $[0,T]$.  Denote the
event in \eqref{ineq:initial-regular} by $\Omega^n_0$, and the event
in Lemma~\ref{lem:comp-cont} by $\Omega^n_C(\mathbf K)$.  Fix $M=1$
and $L=2\lambda T$, Let
\begin{equation}\label{eq:omega-reg}
  \Omega^n_1(M,L)=\Omega^n_0\cap\Omega^n_C(\mathbf K)
  \cap\Omega^n_E\cap\Omega^n_{\text{GC}}(M,L).
\end{equation}
By \eqref{ineq:initial-regular}, Lemma~\ref{lem:comp-cont},
\eqref{eq:omegaE} and \eqref{eq:omega-GC}, for any fixed $M,L>0$,
\[\liminf_{n\to\infty}\prr{\Omega^n_1(M,L)}\ge1-\eta.\]
In the remainder of the proof, all random objects are evaluated at a
fixed sample path in $\Omega^n_1(M,L)$.

It follows from the fluid scaled stochastic dynamic equation
\eqref{eq:stoc-dyn-eqn-S-fs} that
\begin{align*}
  \fs\ser(t)([x,x+\kappa])&\le\fs\ser(0)([x,x+\kappa]+t)\\
  &\quad+\frac{1}{n}\sum_{i=B(0)+1}^{B(t)}
  \delta_{v^n_i}([x,x+\kappa]+t-\tau^n_i),
\end{align*}
for each $x,\kappa\in\R_+$. By \eqref{ineq:initial-regular}, the first
term on the right hand side of the above equation is always upper
bounded by $\epsilon/2$.  Let $S$ denote the second term on the right
hand side of the preceding equation.  Now it only remains to show
that $S<\epsilon/2$.

Let $0=t_0< t_1< \cdots< t_J=t$ be a partition of the interval $[0,t]$
such that $|t_{j+1}-t_j|<\delta$ for all $j=0, \cdots, J-1$, where
$\delta$ and $N$ are to be chosen below.  Write $S$ as the summation
\[
S=\sum_{j=0}^{J-1}\frac{1}{n}\sum_{i=B(t_j)+1}^{B(t_{j+1})}
\delta_{v^n_i}([x,x+\kappa]+t-\tau^n_i).
\]
Recall that $\tau^n_i$ is the time that the $i$th job starts service,
so on each sub-interval $[t_j,t_{j+1}]$ those $i$'s to be summed must
satisfy $t_j\le \tau^n_i\le t_{j+1}$.  This implies that
\[t-t_{j+1}\le t-\tau_i\le t-t_j.\]
Then
\[
S\le\sum_{j=0}^{J-1}\frac{1}{n}\sum_{i=B(t_j)+1}^{B(t_{j+1})}
\delta_{v^n_i}([x+t-t_{j+1},x+t-t_{j}+\kappa]).
\]
By \eqref{eq:def-B},
we have for all $j=0,\cdots, J-1$
\begin{align*}
  -\fs R(0)\le \fs B(t_j)&\le \fs E(T),\\
  0\le \fs B(t_{j+1})-\fs B(t_{j})&\le \fs E(T)+\fs R(0).
\end{align*}
By Lemmas~\ref{lem:comp-cont} and \ref{lem:epsilon-function}, $\fs
R(0)<M_0$ and $\fs E(T)\le 2\lambda T$ on $\Omega^n_C(\mathbf
K)\cap\Omega^n_E$ for some constant $M_0$. Take $M=\max(M_0,2\lambda
T)$ and $L=M_0+2\lambda T$, it follows from the Glevenko-Cantelli
estimate \eqref{def:omega-GC} that
\begin{align*}
  &\quad\frac{1}{n}\sum_{i=B^n(t_j)+1}^{B^n(t_{j+1})}
        \delta_{v^n_i}([x+t-t_{j+1},x+t-t_{j}+\kappa])\\
  &\le\Big(\fs B(t_{j+1})-\fs
  B(t_j)\Big)\nu^n([x+t-t_{j+1},x+t-t_{j}+\kappa])+\frac{\epsilon}{4J},
\end{align*}
for each $j<J$.
By condition \eqref{eq:cond-measures}, for any $\epsilon_2>0$,
\[\pov[\nu^n_G,\nu_G]<\epsilon_2,\]
for all large $n$. By the definition of Prohorov metric, we have
\[\nu^n_G([x+t-t_{j+1},x+t-t_{j}+\kappa])\le
\nu_G([x+t-t_{j+1}-\epsilon_2,x+t-t_{j}+\kappa+\epsilon_2]),\] for
all large $n$.  Since
$[x+t-t_{j+1}-\epsilon_2,x+t-t_{j}+\kappa+\epsilon_2]$ is a close
interval with length less than $\kappa+\delta+2\epsilon_2$, by
condition \eqref{cond:G-no-atom}, we can choose
$\kappa,\delta,\epsilon_2$ small enough such that
\[
\nu([x+t-t_{j+1}-\epsilon_2,x+t-t_{j}+\kappa+\epsilon_2])
\le\frac{\epsilon}{4M}.
\]
Thus, we conclude that
\begin{align*}
  S\le\frac{\epsilon}{4J}[\fs B(T)-\fs B(0)]+\frac{\epsilon}{4}
  \le\epsilon/2.
\end{align*}
This completes the proof.
\end{proof}

\begin{lem}\label{lem:osc-bound}
  Assume \eqref{cond:G-no-atom},
  \eqref{eq:cond-A}--\eqref{eq:cond-initial-noatom}.
  Fix $T>0$.  For each $\epsilon,\eta>0$ there exists a $\delta>0$
  (depending on $\epsilon$ and $\eta$) such that
\begin{equation}
  \liminf_{n\to\infty}\prr{\osc{(\fs\rbuf,\fs\ser)}{\delta}{T}\le
    3\epsilon}\ge 1-\eta.
\end{equation}
\end{lem}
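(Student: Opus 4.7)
The plan is to reduce the oscillation of $(\bar\rbuf^n, \bar\ser^n)$ to two basic estimates already in hand: the arrival regularity captured by $\Omega^n_E$, and the absence of $\bar\ser^n$-mass near the origin supplied by Lemma~\ref{lem:asymp-reg}. The central quantity to control on a short interval $(s, t]$ is the scheduling increment
$$\bar B^n(t) - \bar B^n(s),$$
which is exactly the scaled mass transferred out of $\bar\rbuf^n$ and into $\bar\ser^n$ during that interval.

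Fix $\epsilon, \eta > 0$. Using the continuity of $G$, choose $\kappa_1$ with $G(\kappa_1) \le \epsilon/2$; using Lemma~\ref{lem:asymp-reg} applied with parameters $(\epsilon/4, \eta/4)$, choose $\kappa_2$ so that the event
$\Omega^n_S := \{\sup_{t\in[0,T]}\sup_{x\ge 0}\bar\ser^n(t)([x, x+\kappa_2]) \le \epsilon/4\}$
has liminf probability $\ge 1 - \eta/4$. Let $\delta \in (0, \kappa_1 \wedge \kappa_2 \wedge \epsilon/(4\lambda))$. All subsequent estimates are carried out on the intersection $\Omega^n := \Omega^n_E \cap \Omega^n_S \cap \Omega^n_C(\mathbf K) \cap \Omega^n_{\mathrm{GC}}(M, L)$ with $M, L, \mathbf K$ chosen as in the proofs of Lemmas~\ref{lem:comp-cont} and~\ref{lem:asymp-reg}; by those results and~\eqref{eq:omegaE}, the liminf probability of $\Omega^n$ is at least $1 - \eta$.

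The key estimate is that, on $\Omega^n$, for any $0 \le s < t \le T$ with $t - s < \delta$ and all large $n$,
$$\bar B^n(t) - \bar B^n(s) \le \epsilon.$$
Entries to the server in $(s, t]$ are of two kinds: arrivals that find an idle server (bounded by $\bar E^n(t) - \bar E^n(s) \le \lambda\delta + \epsilon_E(n)$ on $\Omega^n_E$) and customers promoted from the queue upon a service completion (bounded by the number of service completions in $(s, t]$). Completions themselves split into customers who were already in service at time $s$ with residual in $(0, t-s]$, bounded by $\bar\ser^n(s)((0, \delta]) \le \epsilon/4$ on $\Omega^n_S$, and customers who both enter and complete service inside $(s, t]$; the latter count is at most $[\bar B^n(t) - \bar B^n(s)](G(\delta) + o(1))$ by the Glivenko--Cantelli estimate on $\Omega^n_{\mathrm{GC}}$ together with $G(\delta) \le \epsilon/2$. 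Assembling these into a single inequality for $\bar B^n(t) - \bar B^n(s)$ and solving yields the bound.

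With that control in place, both Prohorov estimates follow by direct inspection of the fluid-scaled dynamics \eqref{eq:stoc-dyn-eqn-B-fs}--\eqref{eq:stoc-dyn-eqn-S-fs}. For any closed $A \subset \mathbb{R}$, customers with index in $\{B^n(t)+1, \ldots, E^n(s)\}$ contribute to both $\bar\rbuf^n(s)$ and $\bar\rbuf^n(t)$ with residual patience shifted by exactly $-(t-s) \in (-\delta, 0]$; since $A \subset A^\delta + (t-s)$ whenever $t-s < \delta$, their contribution to $\bar\rbuf^n(s)(A)$ is dominated by $\bar\rbuf^n(t)(A^\delta)$ and vice versa. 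The remaining mismatch consists of new arrivals during $(s,t]$ (contributing only to $\bar\rbuf^n(t)$, with total scaled mass $\le \lambda\delta + \epsilon_E(n) \le \epsilon$) and customers scheduled into service during $(s,t]$ (contributing only to $\bar\rbuf^n(s)$, with total $\bar B^n(t) - \bar B^n(s) \le \epsilon$), giving $\pov[\bar\rbuf^n(s), \bar\rbuf^n(t)] \le 3\epsilon$. An entirely parallel accounting for $\bar\ser^n$---time-shift on persisting customers, added mass $\le \bar B^n(t) - \bar B^n(s) \le \epsilon$, removed mass (completions) $\le \epsilon/4$---yields $\pov[\bar\ser^n(s), \bar\ser^n(t)] \le 3\epsilon$. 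The only delicate step is the increment bound on $\bar B^n$: entries from the queue are not a priori controlled by arrivals, so without the continuity of $G$ one could fear a rapid churn of customers through the server within a tiny window; Lemma~\ref{lem:asymp-reg} and Glivenko--Cantelli together precisely rule this out.
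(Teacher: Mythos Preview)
Your overall plan mirrors the paper's: work on a high-probability event combining arrival regularity with the small-interval mass bound for $\bar\ser^n$ from Lemma~\ref{lem:asymp-reg}, control the increment $\bar B^n(s,t)=\bar B^n(t)-\bar B^n(s)$ over a short window, and then read the Prohorov oscillation off the scaled dynamics \eqref{eq:stoc-dyn-eqn-B-fs}--\eqref{eq:stoc-dyn-eqn-S-fs}. The Prohorov bookkeeping in your final paragraph is essentially the paper's.

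Where you diverge is in the control of $\bar B^n(s,t)$. The paper does this in one line, asserting the pathwise inequality
\[
\bar B^n(s,t)\;\le\;\bar E^n(s,t)+\bar\ser^n(s)\big([0,t-s]\big),
\]
which on $\Omega^n_E\cap\Omega^n_{\mathrm{Reg}}(\epsilon,\kappa)$ with $t-s\le\min\{\epsilon/(2\lambda),\kappa\}$ gives $\bar E^n(s,t)\le\epsilon$ and $\bar B^n(s,t)\le 2\epsilon$ immediately---no Glivenko--Cantelli event, no $G(\delta)$ factor, no self-referential inequality to solve. Your route (split completions into ``already in service at $s$'' versus ``entered and completed within $(s,t]$'', bound the latter by $\bar B^n(s,t)(G(\delta)+o(1))$ via GC, then solve) is more elaborate than what the paper actually uses, and brings in the extra events $\Omega^n_C(\mathbf K)$ and $\Omega^n_{\mathrm{GC}}(M,L)$ that the paper's argument does not need here.

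There is also a genuine gap in your accounting. You describe $\bar B^n(t)-\bar B^n(s)$ as ``exactly the scaled mass transferred out of $\bar\rbuf^n$ and into $\bar\ser^n$,'' and your derivation treats it as the number of \emph{server entries}. But $B^n(s,t)$ counts all departures from the \emph{virtual} buffer, including customers who had already abandoned by the time their turn came; those never enter $\bar\ser^n$ (the indicator $\delta_{u_i}(C_0+\tau_i-a_i)$ in \eqref{eq:stoc-dyn-eqn-S-fs} kills them). Your inequality ``entries $\le$ arrivals-to-idle $+$ completions'' therefore bounds only the non-abandoned portion of $\bar B^n(s,t)$, and the step where you identify the two quantities and solve is not justified as written. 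Since the $\bar\rbuf^n$ oscillation bound genuinely needs control of the full $\bar B^n(s,t)$ (it is the total mass leaving $\bar\rbuf^n$), this matters. The paper's displayed inequality is stated directly for $\bar B^n(s,t)$ and so does not pass through a server-entry count.
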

\begin{proof}
Define
\begin{equation}\nonumber
  \Omega^n_{\text{Reg}}(\epsilon,\kappa)
  =\Big\{\sup_{t\in[0,T]}\sup_{x\in\R_+}\fs\ser(t)([x,x+\kappa])\le\epsilon\Big\}.
\end{equation}
By \eqref{eq:omegaE} and Lemma~\ref{lem:asymp-reg}, for each
$\epsilon,\eta>0$ there exists a $\kappa>0$ such that
\begin{equation}\label{ineq:omega-reg-E}
  \liminf_{n\to\infty}\prr{\Omega^n_E\cap\Omega^n_{\text{Reg}}(\epsilon,\kappa)}
  >1-\eta.
\end{equation}
On the event $\Omega^n_E\cap\Omega^n_{\text{Reg}}(\epsilon,\kappa)$,
we have some control over the dynamics of the system.
%----------------first----------------------
First, note that the number of customers (in the virtual buffer,
including those who have abandoned but ought to get service if they
did not) that enter the server during time interval $(s,t]$ can be
upper bounded by
\begin{equation}\nonumber
  \fs B(s,t)\le\fs E(s,t)+\fs\ser(s)([0,t-s]).
\end{equation}
When $t-s\le \min(\frac{\epsilon}{2\lambda},\kappa)$, by the
definition of $\Omega^n_E$ and
$\Omega^n_{\text{Reg}}(\epsilon,\kappa)$, we have
\begin{align}
  \label{eq:tech-osc-E-bd}
  \fs E(s,t)&\le \epsilon\\
  \label{eq:tech-osc-B-bd}
  \fs B(s,t)&\le2\epsilon.
\end{align}
%----------------second----------------------
Second, by the dynamic equation \eqref{eq:stoc-dyn-eqn-B-fs}, for any
$s<t$ and any set $C\in\mathscr B(\R)$,
\begin{align*}
  \fs\rbuf(t)(C)-\fs\rbuf(s)(C^{3\epsilon}))&\le\fs B(s,t)+\fs E(s,t)\\
  &\quad+\frac{1}{n}\sum_{1+B^n(t)}^{E^n(s)}
   [\delta_{u^n_i}(C+t-a^n_i)-\delta_{u^n_i}(C^{3\epsilon}+s-a^n_i)],
\end{align*}
where $C^a$ is the $a$-enlargement of the set $C$ as defined in
Section~\ref{subsec:notatioin}.  Note that when $t-s\le3\epsilon$,
$C+t-a^n_i\subseteq C^{3\epsilon}+s-a^n_i$ for all $i\in\Z$, which
implies that the second term in the above inequality is less than
zero.  By \eqref{eq:tech-osc-E-bd} and \eqref{eq:tech-osc-B-bd},
\[\fs\rbuf(t)(C)-\fs\rbuf(s)(C^{3\epsilon}))\le 3\epsilon.\]
By Property~(ii) on page 72 in \cite{Billingsley1999},
we have
\begin{equation}\label{eq:osc-bound-buf}
  \pov[\fs\rbuf(t),\fs\rbuf(s)]\le3\epsilon.
\end{equation}
%----------------finally----------------------
Finally, by the dynamic equation \eqref{eq:stoc-dyn-eqn-S-fs},
\[\fs\ser(t)(C)\le\fs\ser(s)(C+t-s))+\fs B(s,t).\]
Note that when $t-s\le2\epsilon$, $C+t-s\subseteq C^{2\epsilon}$,
where $C^a$ is the $a$-enlargement of the set $C$ as defined in
Section~\ref{subsec:notatioin}.  By \eqref{eq:tech-osc-B-bd}, we have
\[\fs\ser(t)(C)\le\fs\ser(s)(C^{2\epsilon})+2\epsilon.\]
By Property~(ii) on page 72 in \cite{Billingsley1999}, we have
\begin{equation}\label{eq:osc-bound-ser}
\pov[\fs\ser(s),\fs\ser(t)]\le2\epsilon.
\end{equation}
The result of this lemma follows immediately from \eqref{ineq:omega-reg-E},
\eqref{eq:osc-bound-buf} and \eqref{eq:osc-bound-ser}.
\end{proof}

\subsection{Convergence to the Fluid Model Solution}
\label{sec:conv-fluid-model}

We have established the precompactness in
Theorem~\ref{thm:procompactness}.  So every subsequence of the fluid
scaled processes has a further subsequence which converges to some
limit.
% Theorem~\ref{thm:fluid-limit} will be proved if we can show that
% every limit is the fluid model solution.
For simplicity of notations, we index the convergent subsequence again
by $n$.  So we have that
\begin{equation}\label{eq:tech-5-sub-conv}
(\bar\rbuf^{n}(\cdot),\bar\ser^{n}(\cdot))\dto
(\tilde\rbuf(\cdot),\tilde\ser(\cdot))\quad\text{as }n\to\infty.
\end{equation}
By the oscillation bound in Lemma~\ref{lem:osc-bound}, the limit
$(\tilde\rbuf(\cdot),\tilde\ser(\cdot))$ is almost surely
continuous. We have the following result that further characterizes
the above limit.

\begin{lem}\label{lem:fluid-limit}
  Assume \eqref{cond:G-no-atom}--\eqref{cond:fl-3} and
  \eqref{eq:cond-A}--\eqref{eq:cond-initial-noatom}.  The limit
  $(\tilde\rbuf(\cdot),\tilde\ser(\cdot))$ in
  \eqref{eq:tech-5-sub-conv} is almost surely the solution to the
  fluid model $\fm$ with initial condition $(\bar\rbuf_0,\bar\ser_0)$.
\end{lem}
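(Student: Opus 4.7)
The plan is to invoke Skorohod's representation theorem so that the distributional convergence in \eqref{eq:tech-5-sub-conv} becomes almost-sure convergence on a common probability space, and then to verify that the limit $(\tilde\rbuf(\cdot),\tilde\ser(\cdot))$ almost surely satisfies the fluid dynamic equations \eqref{eq:fluid-dyn-eqn-B}, \eqref{eq:fluid-dyn-eqn-S} together with the policy constraints \eqref{eq:fluid-constr-B}, \eqref{eq:fluid-constr-S}. Uniqueness of the fluid solution (Theorem~\ref{thm:uniq-exit}) will then identify $(\tilde\rbuf(\cdot),\tilde\ser(\cdot))$ with $(\bar\rbuf(\cdot),\bar\ser(\cdot))$ almost surely, which is exactly the conclusion of the lemma.

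Fix a sample path in the almost-sure convergence set and in the intersection of $\Omega^n_E$ with the compact-containment and asymptotic-regularity events from Lemmas~\ref{lem:comp-cont} and \ref{lem:asymp-reg}. A few items are immediate. The oscillation bound (Lemma~\ref{lem:osc-bound}) transfers to the limit and yields continuity of $(\tilde\rbuf(\cdot),\tilde\ser(\cdot))$; writing $\tilde R(t)=\tilde\rbuf(t)(\R)$, $\tilde Z(t)=\tilde\ser(t)((0,\infty))$, $\tilde Q(t)=\tilde\rbuf(t)((0,\infty))$, $\tilde X(t)=\tilde Q(t)+\tilde Z(t)$, these inherit continuity, and $\tilde B(t):=\lambda t-\tilde R(t)$ is continuous and non-decreasing as the limit of non-decreasing $\fs B(\cdot)$. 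The identification $(\tilde\rbuf(0),\tilde\ser(0))=(\bar\rbuf_0,\bar\ser_0)$ follows from \eqref{eq:cond-initial}. Dividing \eqref{eq:constr-B}--\eqref{eq:constr-S} by $n$ and letting $n\to\infty$ produces \eqref{eq:fluid-constr-B}--\eqref{eq:fluid-constr-S} for the limit.

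The real work is passing to the limit in \eqref{eq:stoc-dyn-eqn-B-fs} and \eqref{eq:stoc-dyn-eqn-S-fs}. For the virtual buffer, the empirical measure $\tfrac{1}{n}\sum_{i=B^n(t)+1}^{E^n(t)}\delta_{(a^n_i,u^n_i)}$ converges weakly, by a joint Glivenko-Cantelli argument available because the $u^n_i$ are i.i.d.\ with $\nu^n_F\to\nu_F$ and because on $\Omega^n_E$ the arrival epochs are asymptotically evenly distributed at rate $\lambda$, to the product of $\lambda$-Lebesgue measure on $[t-\tilde R(t)/\lambda,t]$ with $\nu_F$. Applying this to the set $\{(s,u):u>x+t-s\}$ and using continuity of $F^c$ (from \eqref{cond:fl-3-opt} or \eqref{cond:fl-3}) together with continuity of $\tilde R(\cdot)$ yields \eqref{eq:fluid-dyn-eqn-B} for $\tilde\rbuf$ at every continuity point $x$ of $F^c$; the Portmanteau theorem then identifies $\tilde\rbuf(t)$ as the measure on the right side of \eqref{eq:fluid-dyn-eqn-B}.

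The analogous passage for the server equation is the main obstacle. The summand of \eqref{eq:stoc-dyn-eqn-S-fs} is the indicator that customer $i$'s patience exceeded its waiting time $w^n_i:=\tau^n_i-a^n_i$ (contributing $F^c(w^n_i)$ in the limit) times the indicator that its residual service time exceeds $x$ (contributing $G^c(x+t-\tau^n_i)$). Independence of $\{u^n_i\}$ and $\{v^n_i\}$, together with Glivenko-Cantelli on each sequence, replaces the inner empirical sum by $F^c(\fs R(a^n_i)/\lambda)\,G^c(x+t-\tau^n_i)$; here the FCFS discipline and the virtual-buffer construction are what justify the approximation $w^n_i\approx \fs R(a^n_i)/\lambda$. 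The outer sum, weighted by $1/n$, is a Riemann-Stieltjes sum in $\fs B$ that should converge to $\int_0^t F^c(\tilde R(s)/\lambda)G^c(x+t-s)\,d\tilde B(s)$. The hardest step is precisely this passage of Stieltjes integrals to the limit: I would combine the monotonicity and uniform-on-compacts convergence $\fs B\to\tilde B$ with the continuity of $G$ from \eqref{cond:G-no-atom} (so that the kernel $G^c$ has no jumps against which $d\tilde B$ could accumulate mass), and use Lemma~\ref{lem:asymp-reg} to control contributions of small time windows uniformly in $n$. Once both dynamic equations are verified in the limit, the constraints and initial condition above complete the check that $(\tilde\rbuf,\tilde\ser)$ solves the fluid model, and uniqueness closes the argument.
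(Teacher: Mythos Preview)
Your outline matches the paper's proof in structure: invoke (an extension of) Skorohod representation so that \eqref{eq:tech-5-sub-conv} becomes almost-sure, verify the two fluid dynamic equations by Glivenko--Cantelli plus a Riemann-sum argument, check the policy constraints by passing \eqref{eq:constr-B}--\eqref{eq:constr-S} to the limit, and conclude via the uniqueness in Theorem~\ref{thm:uniq-exit}. The paper's implementation differs from your sketch in two technical respects worth noting. First, rather than treating the empirical measure $\tfrac{1}{n}\sum \delta_{(a^n_i,u^n_i)}$ directly, the paper fixes a fine partition $0=t_0<\cdots<t_J=t$ and records explicit upper and lower bounds (Lemmas~\ref{lem:bound-sko-prep} and~\ref{lem:bound-sko-prep-GC}) that, after Glivenko--Cantelli, become upper and lower Riemann sums for the target integrals; this makes the ``Riemann--Stieltjes convergence'' step routine and avoids any appeal to Lemma~\ref{lem:asymp-reg} here. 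Second, for the server equation the paper approximates the waiting time using $\fs R(\tau^n_i)$ rather than your $\fs R(a^n_i)$: the exact identity $R^n(\tau^n_i)=E^n(\tau^n_i)-E^n(a^n_i)$ yields $|\lambda(\tau^n_i-a^n_i)-\fs R(\tau^n_i)|\le 2\epsilon$ on $\Omega^n_E$ with no further work, whereas justifying $w^n_i\approx \fs R(a^n_i)/\lambda$ would require an additional continuity argument. Modulo these details your plan is the paper's plan.
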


The rest of this section is devoted to characterizing the limits.  To
better structure the proof, we first provide some preliminary
estimates based on the dynamic equations \eqref{eq:stoc-dyn-eqn-B-fs}
and \eqref{eq:stoc-dyn-eqn-S-fs}.

\begin{lem}\label{lem:bound-sko-prep}
  Let $\{t_j\}_{j=0}^J$ be a partition of the interval $[s,t]$ such
  that $s=t_0<t_1<\ldots<t_J=t$.  We have for any $x\in \R$,
  \begin{align}
    \label{eq:buffer-sko-prep-le}\fs\rbuf(t)(C_x)
    &\le\sum_{i=0}^{J-1}\frac{1}{n}\sum_{i=1+E^n(t_j)}^{E^n(t_{j+1})}
    \delta_{u^n_i}(C_x+t-t_j)+|\fs E(s)-\fs B(t)|,\\
    \label{eq:buffer-sko-prep-ge}\fs\rbuf(t)(C_x)
    &\ge\sum_{i=0}^{J-1}\frac{1}{n}\sum_{i=1+E^n(t_j)}^{E^n(t_{j+1})}
    \delta_{u^n_i}(C_x+t-t_{j+1})-|\fs E(s)-\fs B(t)|.
  \end{align}
  If in addition that $\sup_{\tau\in[s,t]}|\fs
  E(\tau)-\lambda\tau|<\epsilon$, then for any $x>0$,
  \begin{align}
    \begin{split}\label{eq:server-sko-prep-le}
      \fs\ser(t)(C_x)&\le\fs\ser(s)(C_x+t-s)\\
      &\quad+\sum_{j=0}^{J-1}\frac{1}{n}\sum_{i=1+B^n(t_j)}^{B^n(t_{j+1})}
      \delta_{u^n_i}(C_0+\frac{\fs R_{L,j}-2\epsilon}{\lambda})
      \delta_{v^n_i}(C_x+t-t_j),
    \end{split}\\
    \begin{split}\label{eq:server-sko-prep-ge}
      \fs\ser(t)(C_x)&\ge\fs\ser(s)(C_x+t-s)\\
      &\quad+\sum_{j=0}^{J-1}\frac{1}{n}\sum_{i=1+B^n(t_j)}^{B^n(t_{j+1})}
      \delta_{u^n_i}(C_0+\frac{\fs R_{U,j}+2\epsilon}{\lambda})
      \delta_{v^n_i}(C_x+t-t_{j+1}),
    \end{split}
  \end{align}
  where $\fs R_{L,j}=\inf_{t\in[t_j,t_{j+1}]}\fs R(t)$ and $\fs
  R_{U,j}=\sup_{t\in[t_j,t_{j+1}]}\fs R(t)$.
\end{lem}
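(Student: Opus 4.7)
The plan is to derive both pairs of inequalities directly from the fluid-scaled dynamic equations \eqref{eq:stoc-dyn-eqn-B-fs} and \eqref{eq:stoc-dyn-eqn-S-fs} by splitting the summation along the partition $\{t_j\}$ and then replacing each random time by the nearest deterministic endpoint, keeping careful track of the direction of each substitution error.

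For the buffer bounds, I would start with \eqref{eq:stoc-dyn-eqn-B-fs} and first replace the random range of summation $[B^n(t)+1,E^n(t)]$ by the time-based range $[E^n(s)+1,E^n(t)]$. Since every summand lies in $\{0,1/n\}$, the net discrepancy is bounded in absolute value by $|\fs E(s)-\fs B(t)|$, which accounts for the additive error in both \eqref{eq:buffer-sko-prep-le} and \eqref{eq:buffer-sko-prep-ge}. Next, I split the resulting sum over the partition blocks $i\in(E^n(t_j),E^n(t_{j+1})]$; each such $i$ has $a^n_i\in(t_j,t_{j+1}]$, hence $t-a^n_i\in[t-t_{j+1},t-t_j)$, so the nested inclusions $C_x+t-t_{j+1}\supseteq C_x+t-a^n_i\supseteq C_x+t-t_j$ give pointwise monotone bounds on $\delta_{u^n_i}(C_x+t-a^n_i)$ in terms of its values at the partition endpoints, yielding the two inequalities.

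For the server bounds, I would first iterate \eqref{eq:stoc-dyn-eqn-S-fs} between $s$ and $t$ to produce the lead term $\fs\ser(s)(C_x+t-s)$ plus a sum over customers $i$ entering service in $(s,t]$. The factor $\delta_{v^n_i}(C_x+t-\tau^n_i)$ is handled exactly as in the buffer argument by partitioning on $\tau^n_i\in(t_j,t_{j+1}]$ and using the analogous monotonicity. The essential new ingredient is a deterministic two-sided bound on the waiting time $w^n_i=\tau^n_i-a^n_i$ that appears inside the non-abandonment indicator $\delta_{u^n_i}(C_0+w^n_i)$. The FIFO accounting identity
\[
E^n(\tau^n_i)-E^n(a^n_i)=R^n(\tau^n_i-)-1,
\]
which expresses that customer $i$ is the first in the virtual buffer at time $\tau^n_i$, combined with the hypothesis $\sup_\tau|\fs E(\tau)-\lambda\tau|<\epsilon$, yields
\[
\bigl|\,\lambda w^n_i-\fs R(\tau^n_i)\,\bigr|\le 2\epsilon
\]
(absorbing the negligible $1/n$ for large $n$). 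For $\tau^n_i\in(t_j,t_{j+1}]$ this pins $\lambda w^n_i$ inside $[\fs R_{L,j}-2\epsilon,\fs R_{U,j}+2\epsilon]$, and the same monotonicity of $\delta_{u^n_i}((\cdot,\infty))$ then delivers \eqref{eq:server-sko-prep-le} and \eqref{eq:server-sko-prep-ge}.

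The only step that is not pure bookkeeping is the pointwise waiting-time estimate $\lambda w^n_i\approx\fs R(\tau^n_i)$ with explicit error $2\epsilon$; everything else reduces to the monotonicity of $\delta_y((\cdot,\infty))$ in its argument, plus the elementary estimate on a sum with mismatched index ranges. Accordingly, I expect that step to be the main, and really only, obstacle.
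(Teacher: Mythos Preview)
Your proposal is correct and follows essentially the same route as the paper: replace the index range $(B^n(t),E^n(t)]$ by $(E^n(s),E^n(t)]$ at the cost of $|\fs E(s)-\fs B(t)|$, split along the partition, and use the monotonicity of $\delta_y((\cdot,\infty))$ in the shift; for the server part, the paper likewise obtains the waiting-time estimate from the identity $\fs R(\tau^n_i)=\fs E(\tau^n_i)-\fs E(a^n_i)$ (your FIFO accounting relation, up to the $1/n$ you note) together with $|\fs E(\tau)-\lambda\tau|<\epsilon$ to get $|\lambda(\tau^n_i-a^n_i)-\fs R(\tau^n_i)|\le 2\epsilon$. One small remark: your nested inclusions $C_x+t-t_{j+1}\supseteq C_x+t-a^n_i\supseteq C_x+t-t_j$ are the correct ones, so the endpoint that gives the upper (resp.\ lower) bound is $t_{j+1}$ (resp.\ $t_j$); the paper's displayed inclusions have the indices transposed, but this is a harmless typo since the application only needs matching upper and lower Riemann sums.
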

\begin{proof}
  Note that $0\le\delta_{u^n_i}(C)\le1$ for any Borel set $C$ and any
  random variable $u^n_i$. So by the dynamic equation
  \eqref{eq:stoc-dyn-eqn-B-fs}, we have
  \[
  \Big|\fs\rbuf(t)(C)-\frac{1}{n}\sum_{i=E^n(s)+1}^{E^n(t)}
  \delta_{u^n_i}(C+t-a^n_i)\Big|\le|\fs E(s)-\fs B(t)|.
  \]
  For those $i$'s such that $E^n(t_j)<i\le E^n(t_{j+1})$, we have that
  \begin{equation}\label{eq:arrival-times}
    t_j<a^n_i\le t_{j+1}.
  \end{equation}
  This implies that $C_x+t-a_i\subseteq C_x+t-t_j$. So we have
  \begin{align*}
    \sum_{i=1+E^n(t_j)}^{E^n(t_{j+1})}
    \delta_{u^n_i}(C_x+t-a_i)
    &\le
    \sum_{i=1+E^n(t_j)}^{E^n(t_{j+1})}
    \delta_{u^n_i}(C_x+t-t_j).
  \end{align*}
  This establishes
  \eqref{eq:buffer-sko-prep-le}. Also, \eqref{eq:arrival-times} implies
  $C_x+t-t_{j+1}\subseteq C_x+t-a_i$. So \eqref{eq:buffer-sko-prep-ge}
  follows in the same way.

  For those $i$'s such that $B^n(t_j)<i\le B^n(t_{j+1})$, we have that
  \begin{equation*}%\label{eq:start-ser-times}
    t_j<\tau^n_j\le t_{j+1}.
  \end{equation*}
  Note that $\fs R(\tau^n_i)=\fs E(\tau^n_i)-\fs E(a^n_i)$ for each
  $i$.  So, by the closeness between $\fs E(\cdot)$ and
  $\lambda\cdot$, we have
  \begin{align*}
    &\quad|\fs R(\tau^n_i)-\lambda(\tau^n_i-a^n_i)|\\
    &\le|\fs R(\tau^n_i)-\fs E(\tau^n_i)+\fs E(a^n_i)|
    +|\fs E(\tau^n_i)-\fs E(a^n_i)-\lambda(\tau^n_i-a^n_i)|\\
    &\le 2\epsilon.
  \end{align*}
  So
  \begin{equation}\nonumber
    \fs R_{L,j}-2\epsilon\le \lambda(\tau^n_i-a^n_i)\le \fs R_{U,j}+2\epsilon,
  \end{equation}
  for all $i$'s such that $B^n(t_j)<i\le B^n(t_{j+1})$.  Thus,
  \[
  \sum_{i=1+B^n(t_j)}^{B^n(t_{j+1})}
    \delta_{u^n_i}(C_0+\tau^n_i-a^n_i)
    \delta_{v^n_i}(C_x+t-\tau^n_j)
  \le\sum_{i=1+B^n(t_j)}^{B^n(t_{j+1})}
    \delta_{u^n_i}(C_0+\frac{\fs R_{L,j}-2\epsilon}{\lambda})
    \delta_{v^n_i}(C_x+t-t_j).
  \]
  This implies \eqref{eq:server-sko-prep-le}. And
  \eqref{eq:server-sko-prep-ge} can be proved in the same way.
\end{proof}

Recall the notations $\fs\gc(m,l),\fs\gc_p(m,l)$ and $\fs\gc_S(m,l)$
are defined in \eqref{eq:def-gc-p}--\eqref{eq:def-gc} in the appendix.
Using these notations, Lemma~\ref{lem:bound-sko-prep} can be written
as the following:
\begin{lem}\label{lem:bound-sko-prep-GC}
  Let $\{t_j\}_{j=0}^J$ be a partition of the interval $[s,t]$ such
  that $s=t_0<t_1<\ldots<t_J=t$.  We have for any $x\in \R$,
  \begin{align}
    \label{eq:buffer-sko-prep-le-GC}\fs\rbuf(t)(C_x)
    &\le\sum_{i=0}^{J-1}\inn{1_{(C_x+t-t_j)}}
    {\fs\gc_p(E^n(t_j),\fs E(t_j,t_{j+1})}+|\fs E(s)-\fs B(t)|,\\
    \label{eq:buffer-sko-prep-ge-GC}\fs\rbuf(t)(C_x)
    &\ge\sum_{i=0}^{J-1}\inn{1_{(C_x+t-t_{j+1})}}
    {\fs\gc_p(E^n(t_j),\fs E(t_j,t_{j+1})}-|\fs E(s)-\fs B(t)|.
  \end{align}
  If in addition that $\sup_{\tau\in[s,t]}|\fs
  E(\tau)-\lambda\tau|<\epsilon$, then for any $x>0$,
  \begin{align}
    \begin{split}\label{eq:server-sko-prep-le-GC}
      \fs\ser(t)(C_x)&\le\fs\ser(s)(C_x+t-s)\\
      &\quad+\sum_{j=0}^{J-1}
      \inn{1_{(C_0+\frac{\fs R_{L,j}-2\epsilon}{\lambda})\times(C_x+t-t_j)}}
      {\fs\gc(B^n(t_j),\fs B(t_j,t_{j+1}))},
    \end{split}\\
    \begin{split}\label{eq:server-sko-prep-ge-GC}
      \fs\ser(t)(C_x)&\ge\fs\ser(s)(C_x+t-s)\\
      &\quad+\sum_{j=0}^{J-1}
      \inn{1_{(C_0+\frac{\fs R_{U,j}+2\epsilon}{\lambda})\times(C_x+t-t_{j+1})}}
      {\fs\gc(B^n(t_j),\fs B(t_j,t_{j+1}))}.
    \end{split}
  \end{align}
\end{lem}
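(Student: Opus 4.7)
The plan is to derive Lemma~\ref{lem:bound-sko-prep-GC} as an immediate notational restatement of Lemma~\ref{lem:bound-sko-prep}, repackaging the explicit empirical sums of Dirac masses as pairings against the Glivenko-Cantelli processes $\fs\gc_p$ and $\fs\gc$ defined in \eqref{eq:def-gc-p}--\eqref{eq:def-gc}. Concretely, $\fs\gc_p(m,l)$ encodes the fluid-scaled empirical measure of the patience times $\{u^n_i\}$ over a window of length $nl$ starting at index $m+1$, and $\fs\gc(m,l)$ encodes the corresponding joint empirical measure on $\R^2$ of the pairs $\{(u^n_i,v^n_i)\}$. Under this encoding, pairing the indicator $1_A$ against $\fs\gc_p(m,l)$ (respectively $\fs\gc(m,l)$) reproduces exactly the normalized sum $\frac{1}{n}\sum_{i=m+1}^{m+nl}\delta_{u^n_i}(A)$ (respectively the product-form sum involving $\delta_{u^n_i}\delta_{v^n_i}$).

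First, for the buffer bounds, I would use the identity $E^n(t_{j+1})-E^n(t_j)=n\,\fs E(t_j,t_{j+1})$, which is built into the fluid-scaled arrival process. The inner sum appearing in \eqref{eq:buffer-sko-prep-le},
\[
\frac{1}{n}\sum_{i=1+E^n(t_j)}^{E^n(t_{j+1})}\delta_{u^n_i}(C_x+t-t_j),
\]
therefore coincides with $\inn{1_{C_x+t-t_j}}{\fs\gc_p(E^n(t_j),\fs E(t_j,t_{j+1}))}$. Substituting into \eqref{eq:buffer-sko-prep-le} gives \eqref{eq:buffer-sko-prep-le-GC}, and replacing $t_j$ by $t_{j+1}$ in the argument of $C_x$ gives \eqref{eq:buffer-sko-prep-ge-GC} from \eqref{eq:buffer-sko-prep-ge}.

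Next, for the server bounds, the same mechanism applies with the process $B^n$ in place of $E^n$: by definition $B^n(t_{j+1})-B^n(t_j)=n\,\fs B(t_j,t_{j+1})$. The product-of-Dirac-masses $\delta_{u^n_i}(C_0+(\fs R_{L,j}-2\epsilon)/\lambda)\,\delta_{v^n_i}(C_x+t-t_j)$ appearing in \eqref{eq:server-sko-prep-le} is precisely the integrand against the product indicator $1_{(C_0+(\fs R_{L,j}-2\epsilon)/\lambda)\times(C_x+t-t_j)}$ on $\R^2$. Integrating against $\fs\gc(B^n(t_j),\fs B(t_j,t_{j+1}))$ then yields \eqref{eq:server-sko-prep-le-GC}, and the symmetric substitution yields \eqref{eq:server-sko-prep-ge-GC} from \eqref{eq:server-sko-prep-ge}.

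There is no substantive obstacle here; the lemma is a purely notational translation whose purpose is to expose the empirical measures $\fs\gc_p$ and $\fs\gc$ in a form amenable to a subsequent Glivenko-Cantelli argument. The only points requiring care are the bookkeeping verification that the index windows on the two sides of each inequality match exactly, and the observation that the $\pm 2\epsilon$ slack in the arguments $C_0+(\fs R_{L,j}\mp 2\epsilon)/\lambda$ carries through unaltered because it appears inside the indicator set against which $\fs\gc$ is paired, not in the empirical measure itself.
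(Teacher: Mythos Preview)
Your proposal is correct and matches the paper's approach exactly: the paper presents this lemma without proof, introducing it as ``Using these notations, Lemma~\ref{lem:bound-sko-prep} can be written as the following,'' i.e., a purely notational rewriting of the preceding lemma in terms of the empirical measures $\fs\gc_p$ and $\fs\gc$. Your observation that $n\,\fs E(t_j,t_{j+1})$ and $n\,\fs B(t_j,t_{j+1})$ are already integers (so the floor in the definition of $\fs\gc_F$, $\fs\gc$ causes no discrepancy in the index windows) is the only bookkeeping point worth noting, and you flag it appropriately.
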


Fix a constant $T>0$ and let $M=1$ and $L=2\lambda T$.  Denote the
random variable
\begin{equation}\label{eq:empirical-V-ML}
  \fs V_{M,L}=\max_{-nM< m< nM}\sup_{l\in[0,L]}\sup_{x,y\in\R}\left\{
    \begin{array}{l}
      \big|\fs\gc(m,l)(C_x\times C_y)-l\nu_F^n(C_x)\nu_G^n(C_y)\big|\\
      +\big|\fs\gc_F(m,l)(C_x)-l\nu_F^n(C_x)\big|\\
      +\big|\fs\gc_G(m,l)(C_x)-l\nu_G^n(C_x)\big|
    \end{array}
  \right\}.
\end{equation}
By Lemma~\ref{lem:glivenko-cantelli}, for any fixed constants $M,L>0$,
\[\fs V_{M,L}\dto 0\quad \textrm{as }n\to\infty.\]
By the assumption \eqref{eq:cond-A}, we have
\[\fs E(\cdot)\dto\lambda \cdot\quad\text{as }n\to\infty.\]
Since both the above two limits are deterministic, those convergences
are joint with the convergence of
$(\bar\rbuf^{n}(\cdot),\bar\ser^{n}(\cdot))$.  Now, for each $n\ge 1$,
we can view $(\fs E(\cdot),
\bar\rbuf^{n}(\cdot),\bar\ser^{n}(\cdot),V_{M,L})$ as a random
variable in the space $\mathbf E_1$, which is the product space of
three $\D([0,\infty),\R)$ spaces and the space $\R$. And
$(\fs\gc(m,\cdot), \fs\gc_F(m,\cdot), \fs\gc_G(m,\cdot): m\in\Z)$ in
the product space $\mathbf E_2$ of countable many $\D([0,\infty),\M)$
spaces. It is clear that both $\mathbf E_1$ and $\mathbf E_2$ are
complete and separable metric spaces.  Using the extension of Skorohod
representation Theorem, Lemma~\ref{lem:ext-sko}, we assume without
loss of generality that $\fs E(\cdot),
\bar\rbuf^{n}(\cdot),\bar\ser^{n}(\cdot), \fs V_{M,L},
\fs\gc(m,\cdot), \fs\gc_F(m,\cdot), \fs\gc_G(m,\cdot), m\in\Z$, and
$(\tilde\rbuf(\cdot),\tilde\ser(\cdot))$ are defined on a common
probability space $(\tilde\Omega, \tilde{\mathcal F}, \tilde{\mathbb
  P})$ such that, almost surely,
\begin{equation}\label{eq:sko-rep}
  \Big((\bar\rbuf^{n}(\cdot),\bar\ser^{n}(\cdot)),\fs V_{M,L},\fs E(\cdot)\Big)
  \to
  \Big((\tilde\rbuf(\cdot),\tilde\ser(\cdot)),0,\lambda \cdot\Big)
  \textrm{\quad as }n\to\infty,
\end{equation}
and inequalities
\eqref{eq:buffer-sko-prep-le-GC}--\eqref{eq:server-sko-prep-ge-GC} and
equation \eqref{eq:empirical-V-ML} also hold almost surely.  Note that
the convergence of each function component in the above is in the
Skorohod $J_1$ topology.  Since the limit is continuous, the
convergence is equivalent to the convergence in the uniform norm on
compact intervals.  Thus as $n\to\infty$,
\begin{align}
  \label{eq:buf-limit}
  \sup_{t\in[0,T]}\pov[\fs\rbuf(t),\tilde\rbuf(t)]\to0,\\
  \label{eq:ser-limit}
  \sup_{t\in[0,T]}\pov[\fs\ser(t),\tilde\ser(t)]\to0,\\
  \label{eq:arrive-limit}
  \sup_{t\in[0,T]}\big|\fs E(t)-\lambda t\big|\to0,
\end{align}
where $\pov$ is the Skorohod metric defined in
Section~\ref{subsec:notatioin}.  Same as on the original probability
space, let
\begin{align*}
  \bar R^n(\cdot)=\inn{1}{\bar \rbuf^n(\cdot)},
  &\quad\bar Q^n(\cdot)=\inn{1_{(0,\infty)}}{\bar \rbuf^n(\cdot)},\\
  \bar Z^n(\cdot)=\inn{1}{\bar \ser^n(\cdot)}, &\quad \bar
  X^n(\cdot)=\bar Q^n(\cdot)+\bar Z^n(\cdot),
\end{align*}
and
\[\fs B(\cdot)=\fs E(\cdot)-\fs R(\cdot).\]
According to \eqref{eq:buf-limit} and \eqref{eq:arrive-limit}, we have
\begin{equation}\label{eq:b-arr-limit}
  \sup_{t\in[0,T]}\big|\fs B(t)-\tilde B(t)\big|\to0.
\end{equation}

For each $n$, let $\tilde\Omega_{n,2}$ be an event of probability one
on which the stochastic dynamic equations \eqref{eq:stoc-dyn-eqn-B-fs}
and \eqref{eq:stoc-dyn-eqn-S-fs} and the policy constraints
\eqref{eq:constr-B} and \eqref{eq:constr-S} hold.  Define
$\tilde\Omega_0=\tilde\Omega_1\cap(\cap_{n=0}^\infty\tilde\Omega^n_{n,2})$,
where $\tilde\Omega_1$ is the event of probability one on which
\eqref{eq:sko-rep} holds.  Then $\tilde\Omega_0$ also has probability
one.  Based on Lemma~\ref{lem:bound-sko-prep} and the above argument
using Skorohod Representation theorem, we can now prove
Lemma~\ref{lem:fluid-limit}.

\begin{proof}[Proof of Lemma~\ref{lem:fluid-limit}]
  For any $t\ge 0$, fix a constant $T> t$. Let us now study
  $(\tilde\rbuf(\cdot),\tilde\ser(\cdot))$ on the time interval
  $[0,T]$.  It is enough to show that on the event $\tilde\Omega_0$,
  $(\tilde\rbuf(t),\tilde\ser(t))$ satisfies the fluid model equation
  \eqref{eq:fluid-dyn-eqn-B}--\eqref{eq:fluid-dyn-eqn-S} and the
  constraints \eqref{eq:fluid-constr-B}--\eqref{eq:fluid-constr-S}.
  Assume for the remainder of this proof that all random objects are
  evaluated at a sample path in the event $\tilde\Omega_0$.

  We first verify \eqref{eq:fluid-dyn-eqn-B}.  For any $\epsilon>0$,
  consider the difference
  \begin{align*}
    &\quad\tilde\rbuf(t)(C_x)-
    \int_{t-\frac{\tilde R(t)}{\lambda}}^tF^c(x+t-s)d\lambda s\\
    &=\tilde\rbuf(t)(C_x)-\fs\rbuf(t)(C^\epsilon_x) +
    \fs\rbuf(t)(C^\epsilon_x) -\int_{t-\frac{\tilde
        R(t)}{\lambda}}^tF^c(x+t-s)d\lambda s,
  \end{align*}
  where $C^\epsilon_x$ is the $\epsilon$-enlargement of the set $C_x$
  as defined in Section~\ref{subsec:notatioin}, which is essentially
  $C_{x-\epsilon}$.  Let $t_0=t-\tilde R(t)/\lambda$.  According to
  \eqref{eq:buffer-sko-prep-le-GC}, we have that
  \begin{equation}\label{eq:diff-buf-ub}
    \begin{split}
      &\quad\tilde\rbuf(t)(C_x)- \int_{t-\frac{\tilde
          R(t)}{\lambda}}^t
      F^c(x+t-s)d\lambda s\\
      &\le\tilde\rbuf(t)(C_x)-\fs\rbuf(t)(C^\epsilon_x)+|\fs E(t_0)-\fs B(t)|\\
      &\quad\sum_{i=0}^{J-1}\inn{1_{(C^\epsilon_x+t-t_j)}}
      {\fs\gc_p(E^n(t_j),\fs E(t_j,t_{j+1})}
        -\int_{t_0}^tF^c(x+t-s)d\lambda s,
    \end{split}
  \end{equation}
  where $\{t_j\}_{j=0}^J$ is a partition of the interval $[t_0,t]$
  such that $t_0<t_1<\ldots<t_J=t$ and $\max_j(t_{j+1}-t_j)<\delta$
  for some $\delta>0$.  By the definition of Prohorov metric and the
  convergence in \eqref{eq:buf-limit}, the first term on the right
  hand side of \eqref{eq:diff-buf-ub} is bounded by $\epsilon$ for
  all large $n$.  By \eqref{eq:buf-limit} and \eqref{eq:arrive-limit}
  \begin{align*}
    |\fs B(t)-\fs E(t_0)|&=|\fs E(t)-\fs R(t)-\fs E(t_0)|\\
    &\le |\fs E(t)-\lambda t|+|\fs R(t)-\tilde R(t)|+|\fs
    E(t_0)-\lambda t_0|<3\epsilon,
  \end{align*}
  for all large $n$.  So
  \begin{equation}\label{eq:diff-buf-u}
    \begin{split}
      &\quad\tilde\rbuf(t)(C_x)- \int_{t-\frac{\tilde
          R(t)}{\lambda}}^t
      F^c(x+t-s)d\lambda s\\
      &\le4\epsilon+\sum_{i=0}^{J-1}\inn{1_{(C^\epsilon_x+t-t_j)}}
      {\fs\gc_p(E^n(t_j),\fs E(t_j,t_{j+1})}
        -\int_{t_0}^tF^c(x+t-s)d\lambda s,
    \end{split}
  \end{equation}
  for all large $n$. Similarly, according to
  \eqref{eq:buffer-sko-prep-ge-GC}, we have
  \begin{equation}\label{eq:diff-buf-l}
    \begin{split}
      &\quad\tilde\rbuf(t)(C_x)- \int_{t-\frac{\tilde
          R(t)}{\lambda}}^t
      F^c(x+t-s)d\lambda s\\
      &\ge-4\epsilon+\sum_{i=0}^{J-1}\inn{1_{(C^\epsilon_x+t-t_{j+1})}}
      {\fs\gc_p(E^n(t_j),\fs E(t_j,t_{j+1})}
        -\int_{t_0}^tF^c(x+t-s)d\lambda s,
    \end{split}
  \end{equation}
  for all large $n$.
  Note that for each $j$, we have
  \begin{align*}
    &\quad\inn{1_{(C_x+t-t_j)}}
    {\fs\gc_p(E^n(t_j),\fs E(t_j,t_{j+1})}\\
    &\le
    \inn{1_{(C_x+t-t_j)}}
    {\fs\gc_p(E^n(t_j),\lambda(t_{j+1}-t_j)+2\epsilon}\\
    &\le
    [\lambda(t_{j+1}-t_j)+2\epsilon]\nu^n_F(C^\epsilon_x+t-t_j)+\epsilon\\
    &\le
    [\lambda(t_{j+1}-t_j)+2\epsilon][\nu_F(C_x+t-t_j)+\epsilon]+\epsilon\\
    &\le
    \lambda(t_{j+1}-t_j)\nu_F(C_x+t-t_j)+(3+\lambda\delta)\epsilon
  \end{align*}
  for all large $n$, where the first inequality is due to
  \eqref{eq:arrive-limit}, the second one is due to \eqref{eq:sko-rep}
  (the component of $\fs V_{M,L}$), the third one is due to
  \eqref{eq:cond-measures}, and the last one is due to algebra.
  Similarly, we can show that
  \begin{align*}
    &\quad\inn{1_{(C_x+t-t_{j+1})}}
    {\fs\gc_p(E^n(t_j),\fs E(t_j,t_{j+1})}\\
    &\ge
    \lambda(t_{j+1}-t_j)\nu_F(C_x+t-t_{j+1})-(3+\lambda\delta)\epsilon
  \end{align*}
  for all large $n$.  Note that
  $\sum_{j=0}^{J-1}\lambda(t_{j+1}-t_j)F^c(x+t-t_{j})$ and
  $\sum_{j=0}^{J-1}\lambda(t_{j+1}-t_j)F^c(x+t-t_{j+1})$ serve as the
  upper and lower Reimann sum of the integral
  $\int_{t_0}^tF^c(x+t-s)d\lambda s$, which converge to the
  integration as $n\to\infty$.  So by \eqref{eq:diff-buf-u} and
  \eqref{eq:diff-buf-l}, we have that for all large $n$,
  \begin{equation*}
    \big|\tilde\rbuf(t)(C_x)-\int_{t-\frac{\tilde R(t)}{\lambda}}^t
      F^c(x+t-s)d\lambda s\big|\le(3+\lambda\delta)J\epsilon+5\epsilon.
  \end{equation*}
  We conclude that $\tilde\rbuf(t)(C_x)-\int_{t-\frac{\tilde
      R(t)}{\lambda}}^t F^c(x+t-s)d\lambda s=0$ since $\epsilon$ in
  the above can be arbitrary.  This verifies
  \eqref{eq:fluid-dyn-eqn-B}.

%------------------------------------------------------------------------
  Next, we verify \eqref{eq:fluid-dyn-eqn-S}.  For any $\epsilon>0$,
  consider the difference
  \begin{equation}\label{eq:diff-ser}
    \begin{split}
      &\quad\Big|\tilde\ser(t)(C_x)-\bar\ser_0(C_x+t)
      -\int_0^tF^c(\frac{\tilde R(s)}{\lambda})
      G^c(x+t-s)d[\lambda s-\tilde R(s)]\Big|\\
      &\le|\tilde\ser(t)(C_x)-\fs\ser(t)(C_x^\epsilon)|
      +|\tilde\ser_0(C_x+t)-\fs\ser(0)(C_x^\epsilon+t)|\\
      &\quad+\Big|\fs\ser(t)(C_x^\epsilon)-\fs\ser(0)(C_x^\epsilon+t)
      -\int_0^tF^c(\frac{\tilde R(s)}
      {\lambda})G^c(x+t-s)d[\lambda s-\tilde R(s)]\Big|,
    \end{split}
  \end{equation}
  where the above inequality is due to the fluid scaled stochastic
  dynamic equation \eqref{eq:stoc-dyn-eqn-S-fs}.  Again, by the
  definition of Prohorov metric and the convergence in
  \eqref{eq:ser-limit}, each of the first two terms on the right hand
  side in the above inequality is less than $\epsilon$ for all large
  $n$.  Let $\{t_j\}_{j=0}^J$ be a partition of the interval $[0,t]$
  such that $0=t_0<t_1<\ldots<t_J=t$ and $\max_j(t_{j+1}-t_j)<\delta$
  for some $\delta>0$.  Let
  \[
  \tilde R_{U,j}=\sup_{t\in[t_j,t_{j+1}]}\tilde R(t),\quad
  \tilde R_{L,j}=\inf_{t\in[t_j,t_{j+1}]}\tilde R(t).
  \]
  By \eqref{eq:buf-limit}, we have that
  \begin{equation*}
    |\fs R_{U,j}-\tilde R_{U,j}|\le \epsilon,\quad |\fs R_{L,j}-\tilde R_{L,j}|\le \epsilon,
  \end{equation*}
  for all large $n$.  So for each $j$, we have
  \begin{align*}
    &\quad\inn{1_{(C_0+\frac{\fs R_{L,j}-2\epsilon}{\lambda})\times(C^\epsilon_x+t-t_j)}}
    {\fs\gc(B^n(t_j),\fs B(t_j,t_{j+1}))}\\
    &\le\inn{1_{(C_0+\frac{\tilde R_{L,j}-3\epsilon}{\lambda})\times(C^\epsilon_x+t-t_j)}}
    {\fs\gc(B^n(t_j),\tilde B(t_{j+1})-\tilde B(t_j)+2\epsilon)}\\
    &\le[\tilde B(t_{j+1})-\tilde B(t_j)+2\epsilon]
    \nu^n_F(C_0+\frac{\tilde R_{L,j}-3\epsilon}{\lambda})
    \nu^n_G(C^\epsilon_x+t-t_j)+\epsilon\\
    &\le[\tilde B(t_{j+1})-\tilde B(t_j)+2\epsilon]
    [\nu_F(C_0+\frac{\tilde R_{L,j}}{\lambda})+\frac{3\epsilon}{\lambda}]
    [\nu_G(C_x+t-t_j)+\epsilon]+\epsilon
  \end{align*}
  for all large $n$, where the first inequality is due to
  \eqref{eq:b-arr-limit}, the second one is due to \eqref{eq:sko-rep}
  (the component of $\fs V_{M,L}$), the third one is due to
  \eqref{eq:cond-measures}.  Let $M_B$ be a finite upper bound of
  $\tilde B(t_{J})-\tilde B(t_0)$, the above inequality can be further
  bounded by
  \begin{align*}
    [\tilde B(t_{j+1})-\tilde B(t_j)]
    \nu_F(C_0+\frac{\tilde R_{L,j}}{\lambda})
    \nu_G(C_x+t-t_j)+(\frac{3}{\lambda}+2)M_B\epsilon+3\epsilon.
  \end{align*}
  Similarly, we can show that
  \begin{align*}
    &\quad\inn{1_{(C_0+\frac{\fs R_{U,j}+2\epsilon}{\lambda})\times(C_x+t-t_{j+1})}}
    {\fs\gc(B^n(t_j),\fs B(t_j,t_{j+1}))}\\
    &\ge
    [\tilde B(t_{j+1})-\tilde B(t_j)]
    \nu_F(C_0+\frac{\tilde R_{L,j}}{\lambda})
    \nu_G(C_x+t-t_j)-(\frac{3}{\lambda}+2)M_B\epsilon-3\epsilon.
  \end{align*}
  Note that $\sum_{j=0}^{J-1}[\tilde B(t_{j+1})-\tilde B(t_j)]
  F^c(\frac{\tilde R_{U,j}}{\lambda})G^c(x+t-t_{j})$ and
  $\sum_{j=0}^{J-1}[\tilde B(t_{j+1})-\tilde B(t_j)]
  F^c(\frac{\tilde R_{L,j}}{\lambda})G^c(x+t-t_{j+1})$ serve as the upper and
  lower Reimann sum of the integral $\int_{t_0}^tF^c(\frac{\tilde
    R(s)}{\lambda})G^c(x+t-s)d\tilde B(s)$, which converge to the
  integration as $n\to\infty$.  So, by
  \eqref{eq:server-sko-prep-le-GC} and
  \eqref{eq:server-sko-prep-ge-GC}, we have that for all large $n$,
  \[
  \Big|\fs\ser(t)(C_x^\epsilon)-\fs\ser(0)(C_x^\epsilon+t)
  -\int_{t_0}^tF^c(\frac{\tilde R(s)}{\lambda})G^c(x+t-s)d\tilde B(s)\Big|
  \le (\frac{3}{\lambda}+2)M_B\epsilon+3\epsilon+\epsilon.
  \]
  In summary, the right hand side of \eqref{eq:diff-ser} can be
  bounded by a finite multiple of $\epsilon$.  We conclude that the
  left hand side of \eqref{eq:diff-ser} must be 0 since it does not
  depend on $\epsilon$, which can be arbitrary.  This verifies
  \eqref{eq:fluid-dyn-eqn-S}.

  The verification of fluid constrains \eqref{eq:fluid-constr-B} and
  \eqref{eq:fluid-constr-S} is quite straightforward. Basically, it is
  just passing the fluid scaled stochastic constraints
  \begin{align*}
    \fs Q(t)&=(\fs X(t)-1)^+,\\
    \fs Z(t)&=(\fs X(t)\wedge1),
  \end{align*}
  to $n\to\infty$. We omit it for brevity.
\end{proof}

\section{The Special Case with Exponential Distribution}
\label{sec:special-case-with}

In this section, we verify that the fluid model developed in this
paper for the general patience and service time distributions is
consistent with the one in \cite{Whitt2004}, that was obtained in the
special case where both distributions are assumed to be exponential.

Our fluid model equations implies the key relationship
\eqref{eq:fluid-key}. Now, we specialize in the case with exponential
distribution, i.e.
\[
F(t)=F_e(t)=1-e^{-\alpha t},\quad G(t)=G_e(t)=1-e^{-\mu t}, \quad
\textrm{ for all } t\ge 0.
\]
Now \eqref{eq:fluid-key} becomes
\begin{equation*}%\label{eq:fluid-key}
  \begin{split}
    \bar X(t)&=\zeta_0(t)+\rho\int_0^t \big[1-\frac{\alpha}{\lambda}\big((\bar
    X(t-s)-1)^+\big)\big]\mu e^{-\mu s}ds+\int_0^t(\bar X(t-s)-1)^+ \mu
    e^{-\mu s}ds.
  \end{split}
\end{equation*}
In the case of exponential service time distribution, the remaining
service time of those initially in service and the service times of
those initially waiting in queue are also assumed to be exponentially
distributed. So we have
\[\zeta_0(t)=\bar\ser_0(C_0+t)+\bar Q_0e^{-\mu t}=\bar X_0e^{-\mu t},\]
where $\bar X_0=\bar Z_0+\bar Q_0$ is the initial number of customers
in the system. By some algebra, the above two equations can be
simplified as the following,
\begin{equation}\label{eq:fluid-key-expl}
\bar X(t)=\bar X_0e^{-\mu t}+\rho[1-e^{-\mu t}]+(\mu-\alpha)
\int_0^t(\bar X(t-s)-1)^+e^{-\mu s}ds.
\end{equation}
By the change of variable $t-s\to s$, the above integration can be
written as
\[
\int_0^t(\bar X(t-s)-1)^+e^{-\mu s}ds
=e^{-\mu t}\int_0^t(\bar X(s)-1)^+e^{\mu s}ds.
\]
Taking the derivative on both sides of \eqref{eq:fluid-key-expl}
yields
\begin{align*}
\bar X'(t)&=-\mu X_0e^{-\mu t}+\mu\rho e^{\mu t}\\
&\quad +(\mu-\alpha)[-\mu e^{-\mu t}\int_0^t(\bar X(s)-1)^+e^{\mu s}ds
+e^{-\mu t}(\bar X(t)-1)^+e^{\mu t}]\\
&=-\mu X_0e^{-\mu t}-\mu\rho[1-e^{\mu t}]+\mu\rho\\
&\quad -\mu(\mu-\alpha)
e^{-\mu t}\int_0^t(\bar X(s)-1)^+e^{\mu s}ds+(\mu-\alpha)(\bar X(t)-1)^+\\
&=-\mu \bar X(t)+\mu\rho+(\mu-\alpha)(\bar X(t)-1)^+.
\end{align*}
Using the notation in \cite{Whitt2004}, $a^{-}=-\min(0,a)$ for any
$a\in\R$. Note that $a=\min(a,1)+(a-1)^+=1-(a-1)^-+(a-1)^+$. So the
above equation further implies
\[\bar X'(t)=\mu(\rho-1)-\alpha(\bar X(t)-1)^++\mu(\bar X(t)-1)^-,
\quad\textrm{for all }t\ge 0.\] This equation is consistent with
Theorem~2.2 in \cite{Whitt2004} ($\mu$ is assumed to be 1 in that
paper).

\section*{Acknowledgements}

The author would like to express the gratitude to his Ph.D
supervisors, Professor Jim Dai and Professor Bert Zwart, for many
useful discussions.  The author is grateful to Professor Christian
Gromoll from the department of mathematics at University of Virginia
for suggesting a nice method on using Skorohod representation theorem
to make the presentation in Section~\ref{sec:conv-fluid-model}
rigorous.  This research is supported in part by National Science
Foundation grants CMMI-0727400 and CNS-0718701.

% The authors thank two anonymous referees for significantly improving
% the paper.

\bibliographystyle{../bib/acmtrans-ims}
\bibliography{../bib/pub,../bib/nyp}

\appendix

\section{A Convolution Equation}

\begin{lem}\label{reg-map}
  Assume that $G(\cdot)$ is a distribution function with $G(0)<1$,
  $\zeta(\cdot)\in\D([0,T],\R)$, $H(\cdot)$ is a Lipschitz continuous
  function, and $\rho\in\R$.  There exists a unique solution
  $x^*(\cdot)\in\D([0,T],\R)$ to the following equation:
  \begin{equation}\label{eq:reg-map}
    x(t)=\zeta(t)+\rho\int_0^t H\big((x(t-s)-1)^+\big)dG_e(s)
         +\int_0^t(x(t-s)-1)^+ dG(s),
  \end{equation}
  where, $G_e$ is the equilibrium distribution of $G$ as defined in
  Section~\ref{subsec:fluid-model}. 
\end{lem}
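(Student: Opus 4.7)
The plan is to obtain both existence and uniqueness via a contraction-mapping argument applied on successive short subintervals, i.e.\ the classical method of steps for Volterra-type equations.  Define the operator $\Phi:\D([0,T],\R)\to\D([0,T],\R)$ by letting $\Phi(x)(t)$ equal the right-hand side of \eqref{eq:reg-map}, with both Lebesgue--Stieltjes integrals taken over $(0,t]$ per the paper's convention; this convention ensures that $\Phi(x)(t)$ does not depend on the single value $x(t)$, even when $G$ has an atom at $0$, so the fixed-point equation is pointwise well posed.  I would first verify that $\Phi$ sends $\D$ into itself: $\zeta$ is c\`adl\`ag by hypothesis, the $G_e$-integral is in fact absolutely continuous in $t$ because $G_e$ has the bounded density $\mu G^c$, and the $G$-integral inherits the c\`adl\`ag property from the convolution of a bounded measurable integrand with the c\`adl\`ag $G$.

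For the contraction step I would exploit that $H$ is Lipschitz with some constant $L_H$ and that $z\mapsto(z-1)^+$ is Lipschitz with constant $1$.  For any $x_1,x_2\in\D([0,t_0],\R)$ equipped with the (complete) uniform metric,
\[
\sup_{0\le t\le t_0}\bigl|\Phi(x_1)(t)-\Phi(x_2)(t)\bigr|\le\bigl(|\rho|L_H\,G_e(t_0)+G(t_0)-G(0)\bigr)\sup_{0\le t\le t_0}|x_1(t)-x_2(t)|.
\]
Since $G_e(0)=0$ and $G$ is right-continuous at $0$, the bracketed factor tends to $0$ as $t_0\downarrow 0$, so I can pick $t_0$ small enough that this factor lies strictly below $1$.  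The Banach fixed-point theorem then produces a unique solution $x^{\ast}$ on $[0,t_0]$.

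To extend to $[0,T]$ I proceed by induction: given a unique solution on $[0,kt_0]$, I split each integral in \eqref{eq:reg-map} for $t\in[kt_0,(k+1)t_0]$ as $\int_0^{t-kt_0}+\int_{t-kt_0}^t$.  The second piece depends only on the already-determined values of $x$ on $[0,kt_0]$ and can be absorbed into a new c\`adl\`ag forcing function $\tilde\zeta_k(t)$, while the first piece is a contraction in the unknown values on $[kt_0,(k+1)t_0]$ with exactly the same coefficient as before, the crucial point being that the contraction coefficient depends only on the length $t_0$ and not on the step index.  After $\lceil T/t_0\rceil$ iterations the pieces glue into a single element of $\D([0,T],\R)$, which is the unique solution to \eqref{eq:reg-map}.

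The main obstacle I anticipate is confirming that the contraction coefficient is indeed uniform across all iterations.  This boils down to the observation that the combined ``small-interval mass'' $G_e(t_0)+G(t_0)-G(0)$ is a function only of $t_0$ and not of the starting point, which is a consequence of the time-homogeneity of the convolution kernels $G_e$ and $G$.  A subsidiary point worth a clean check is that $\Phi$ preserves the c\`adl\`ag structure across any jumps of $G$, which follows from dominated convergence for Lebesgue--Stieltjes integrals; the hypothesis $G(0)<1$ ensures that the problem does not degenerate to a pure jump at the origin, so $G_e$ is nontrivial and the step length $t_0$ can be chosen strictly positive.
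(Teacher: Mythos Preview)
Your proposal is correct and follows essentially the same approach as the paper: both define the right-hand side of \eqref{eq:reg-map} as an operator on $\D$ equipped with the uniform metric, show it is a contraction on a sufficiently short interval via the Lipschitz bounds on $H$ and $(\cdot-1)^+$, and then extend to $[0,T]$ by absorbing the already-determined portion of the integrals into a new forcing term and iterating. The only cosmetic differences are that the paper writes the extension as a time shift $x_b(t)=x(b+t)$ rather than an integral split, and that you (correctly) carry $|\rho|$ in the contraction estimate.
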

\begin{proof}
  Suppose $H(\cdot)$ is Lipschitz continuous with constant $L$.  The
  equilibruim distribution has density $\mu[1-G(\cdot)]$, so
  $|G_e(t)-G_e(s)|\le \mu |t-s|$ for any $s,t\in\R$. Since
  $G(0)<1$, there exists $b>0$ such that
  \begin{equation}\nonumber
    \kappa:=\rho L [G_e(b)-G_e(0)]+[G(b)-G(0)]<1.
  \end{equation} 
  Now consider the space $\D([0,b],\R)$ (all real valued \rcll{}
  functions on $[0,b]$, c.f.\ Section~\ref{subsec:notatioin}) is a
  subset of the Banach space of bounded, measurable functions on $[0,
  b]$, equipped with the sup norm. One can check that this subset is
  closed in the Banach space. Thus, the space $\D([0,b],\R)$ itself,
  equipped with the uniform metric $\upsilon_T$ (defined in
  Section~\ref{subsec:notatioin}), is complete.

  For any $y\in\D([0,b],\R)$, define $\Psi(y)$ by 
  \[
    \Psi(y)(t)=\zeta(t)+\rho\int_0^{t}H\left((y(t-s)-1)^+\right)dG_e(s)
     +\int_0^{t}(y(t-s)-1)^+dG(s),
  \]
  for any $t\in[0,b]$.  By convention, the integration $\int_0^t
  y(t-s) d F(s)$ is interpreted to be $\int_{(0, t]}y(t-s) d F(s)$
  (c.f.\ Page 43 in \cite{Chung2001}).  We prove the existence and
  uniqueness of the solution to equation \eqref{eq:reg-map} by showing
  that $\Psi$ is a contraction mapping on $\D([0,b],\R)$.  According
  to the proof of Lemma~A.1 in \cite{ZDZ2009}, the convolution of a
  \rcll{} function with a distribution function is still a \rcll{}
  function. So $\Psi$ is a mapping from $\D([0,b],\R)$ to
  $\D([0,b],\R)$. Next, we show that the mapping $\Psi$ is a
  contraction.  For any $y,y'\in\D([0,b],\R)$, we have that 
  \begin{align*}
    \upsilon_b[\Psi(y),\Psi(y')]
    &\le\sup_{t\in[0,b]}\rho\int_0^{t}
    L\big|(y(t-s)-1)^+-(y'(u-v)-1)^+\big| dG_e(s)\\
    &\quad+\sup_{t\in[0,b]}\int_0^{t}\big|(y(t-s)-1)^+-(y'(t-s)-1)^+\big|dG(s)\\
    &\le\rho L\int_0^{b}\upsilon_b[y,y'] dG_e(s)
    +\int_0^{b}\upsilon_b[y,y']dG(s)\\
    &\le \kappa\upsilon_b[y,y'].
  \end{align*}
  Since $\kappa<1$, the mapping $\Psi$ is a contraction.  By the
  contraction mapping theorem (c.f.\ Theorem~$3.2$ in
  \cite{HunterNach2001}), $\Psi$ has a unique fixed point $x$, i.e.\
  $x=\psi(x)$. This implies that $x\in\D([0,b],\R)$ is the unique
  solution to equation \eqref{eq:reg-map} on $[0,b]$.

  It now remains to extend the existence and uniqueness result from
  $[0,b]$ to $[0,T]$.  Denote $x_b(t)=x(b+t)$,
  $\zeta_b(t)=\zeta(b+t)
  +\rho\int_t^{b+t}H\left((x(b+t-s)-1)^+\right)dG_e(s)
  +\int_t^{b+t}(x(b+t-s)-1)^+dG(s)$, then we have for $t\in[0,T-b]$, 
  \begin{equation}
    \label{eq:reg-map--shifted}
    x_b(t)=\zeta_b(t)+\rho\int_0^{t}H\left((x_b(t-s)-1)^+\right)dG_e(s)
     +\int_0^{t}(x_b(t-s)-1)^+dG(s).    
  \end{equation}
  It follows from the previous argument that there is unique solution
  $x_b(\cdot)$ to the above equation. Thus, we obtain a unique
  extension of the solution to \eqref{eq:reg-map} on the interval
  $[0,2b]$. Repeating this approach for $N$ time with $N\ge
  \cl{T/b}$ gives a unique solution on the interval $[0,T]$.
\end{proof}

\begin{lem}\label{lem:reg-map--incre}
  Assume the same condition as in Lemma~\ref{reg-map}. Let
  $x(\cdot)\in \D([0, T], \R)$ be the solution to equation
  \eqref{eq:reg-map}. If $\rho=\lambda/\mu$ with $\lambda,\mu>0$
  ($\mu$ is the mean of $G$) , $H(x)\ge 0$ for all $x\ge
  0$, and $\zeta(\cdot)$ satisfies the following condition
  \begin{equation}\label{eq:zeta-cond-appdix}
    \zeta(t)=h(t)+(\zeta(0)-1)^+[1-G(t)],
  \end{equation}
  where $h(\cdot)$ is a non-increasing function, then
  the function
  \[(x(t)-1)^+-\lambda\int_0^tH\left((x(s)-1)^+\right)ds\]
  is non-increasing on the interval $[0,T]$.
\end{lem}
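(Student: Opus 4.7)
The plan is to recognize $N(t) := (x(t)-1)^+ - \lambda\int_0^t H((x(s)-1)^+)\,ds$ as satisfying a first-kind Volterra equation whose forcing is non-positive and non-increasing on each excursion of $x$ above the level $1$, and then to extract monotonicity via a renewal-theoretic inversion combined with an excursion argument. Set $q(t)=(x(t)-1)^+$, $K(t)=\lambda\int_0^t H(q(s))\,ds$, and $v(t)=(1-x(t))^+$, so that $N=q-K$, $q(t)v(t)=0$, and $x-q=1-v$. Since the right-hand side of \eqref{eq:reg-map} is of bounded variation, so are $x$, $q$, and $N$, and $dN$ is a well-defined signed measure on $[0,T]$.

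The first step is to derive a key integral identity. Integration by parts gives $\int_0^t q(t-s)\,dG(s) = q(0)G(t) + \int_0^t G(t-u)\,dq(u)$. Substituting this and the hypothesis $\zeta(t) = h(t) + (\zeta(0)-1)^+ G^c(t)$ into \eqref{eq:reg-map} and using $G(t-u)=1-G^c(t-u)$, the $q(0)$ contributions telescope and one obtains after simplification $x(t) - q(t) = h(t) - \int_0^t G^c(t-u)\,dN(u)$. Since $x-q=1-v$, this rearranges to the key identity
\[
\int_0^t G^c(t-u)\,dN(u) = h(t) - 1 + v(t), \qquad t\in[0,T].
\]

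The second step is an excursion argument. On the closed set $\{x\le 1\}$, $q\equiv 0$ and $N=-K$ is trivially non-increasing. The complement $\{x>1\}$ decomposes into open intervals $(a,b)$ with $x(a)=1$ or $a=0$, and $x(b)=1$ or $b=T$; on each such excursion $v\equiv 0$. Evaluating the key identity at $t=a$ (where $v(a)=0$) gives $\int_0^a G^c(a-u)\,dN(u)=h(a)-1$, and subtracting this from the identity at $t=a+\tau\in(a,b)$ yields, for $\tilde N(\tau) := N(a+\tau) - N(a)$ on $[0,b-a]$,
\[
\int_0^\tau G^c(\tau-\tau')\,d\tilde N(\tau') = \tilde\phi(\tau), \qquad
\tilde\phi(\tau) := [h(a+\tau)-h(a)] - \int_0^a [G^c(a+\tau-u)-G^c(a-u)]\,dN(u).
\]
Under the inductive assumption $dN\le 0$ on $[0,a]$, both summands of $\tilde\phi$ are non-increasing in $\tau$ (the first by the hypothesis on $h$; the second by the monotonicity of $G^c$ combined with the sign of $dN$), with $\tilde\phi(0)=0$, so $\tilde\phi$ is non-positive and non-increasing on $[0,b-a]$.

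Finally, integration by parts converts the first-kind equation into the renewal equation $\tilde N(\tau) = \tilde\phi(\tau) + \int_0^\tau \tilde N(\tau-s)\,dG(s)$, whose unique bounded solution admits the representation $\tilde N = \tilde\phi + \tilde\phi * dU^\sharp$ with the non-negative renewal measure $dU^\sharp := \sum_{k\ge 1} dG^{*k}$. A direct comparison, splitting $[0,\tau_2] = [0,\tau_1]\cup(\tau_1,\tau_2]$ and using both the monotonicity of $\tilde\phi$ and the boundary value $\tilde\phi(0)=0$, shows that $\tilde\phi * dU^\sharp$ is also non-increasing, whence $\tilde N$ is non-increasing on $[0,b-a]$. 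A transfinite induction on the (at most countably many) excursions, using one-sided limits of $N$ at accumulation points of excursion endpoints, then extends the monotonicity to all of $[0,T]$. The main delicacies will be the measure-theoretic integration by parts in Step~1 (since $G$ is only assumed continuous, not absolutely continuous) and careful handling of potential jumps in $x$ when $h$ is discontinuous; once the key identity is in hand, the excursion decomposition and renewal inversion reduce to standard Lebesgue--Stieltjes calculations.
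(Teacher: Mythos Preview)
Your approach derives essentially the same Volterra identity as the paper --- your first-kind equation $\int_0^t G^c(t-u)\,dN(u)=h(t)-1+v(t)$ is the integration-by-parts transform of the paper's second-kind equation $(x(t)\wedge 1)+D(t)=\zeta(t)+\int_0^t D(t-s)\,dG(s)$ with $D=N$ --- but the route you take afterwards is genuinely different and carries gaps.

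\textbf{First gap: bounded variation.} To write $dN$ as a signed measure and perform the integrations by parts you need $N$ (equivalently $q$) to be of locally bounded variation. You assert this because ``the right-hand side of \eqref{eq:reg-map} is of bounded variation'', but the term $\int_0^t q(t-s)\,dG(s)$ is only BV if $q$ already is, which is circular. Under the stated hypotheses ($G$ only satisfies $G(0)<1$, so it may be singular continuous or have atoms) a bootstrap inequality of the form $V_0^b(q)\le C+G(b)\,V_0^b(q)$ can be written down, but it does not rule out $V_0^b(q)=\infty$ without additional work. The paper never needs this: it stays with the second-kind equation, which involves only values of $D$, not $dD$.

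\textbf{Second gap: excursions for c\`adl\`ag paths.} Your claim $v(a)=0$ at the left endpoint of an excursion requires that $x$ have no upward jumps (otherwise one can have $x(a-)\le 1<x(a)$ and the excursion set $\{x>1\}$ need not be open). The ``transfinite induction'' through accumulation points of excursion endpoints is likewise only a sketch; making it precise again requires controlling jumps of $N$.

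The paper's argument is both simpler and sidesteps these issues. Working directly with the second-kind identity on a short interval $[0,a]$ with $G(a)<1$, it sets $D^*=\sup_{0\le t\le t'\le a}[D(t')-D(t)]$ and splits into two cases: if $x(t')<1$ then $q(t')=0$ and $D(t')-D(t)=-\lambda\int_t^{t'}H(q)\,ds-q(t)\le 0$; if $x(t')\ge 1$ then $(x(t)\wedge 1)-(x(t')\wedge 1)\le 0$, and telescoping the convolution yields $D(t')-D(t)\le D^*G(a)$. Hence $D^*\le\max(0,D^*G(a))$, forcing $D^*\le 0$. This single case split replaces your entire excursion decomposition and renewal inversion, requires no BV hypothesis, and extends to $[0,T]$ by ordinary induction on intervals of length $a$ after verifying that the shifted forcing $\zeta_{na}$ again satisfies the structural condition \eqref{eq:zeta-cond-appdix}. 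If you want to salvage your route, the cleanest fix is to drop the first-kind formulation altogether, keep the renewal equation $\tilde N=\tilde\phi+\tilde N*dG$ on each excursion, and replace the transfinite induction by a maximality argument on $\sup\{c:N\text{ non-increasing on }[0,c]\}$; but at that point you are close to the paper's proof anyway.
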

\begin{proof}
  To simplify the notation, let $Q(t)=(x(t)-1)^+$  and 
  \begin{equation}\label{eq:def-D}
      D(t)=Q(t)-\lambda\int_0^tH\left(Q(s)\right)ds
  \end{equation}
  for all $t\in[0,T]$. Since $G_e(\cdot)$ is the equilibrium
  distribution, we have
  \begin{align*}
    x(t)&=\zeta(t)+\rho\int_0^{t}H\left(Q(t-s)\right)\mu[1-G(s)]ds
     +\int_0^{t}Q(t-s)dG(s)\\
     &=\zeta(t)+\lambda\int_0^{t}H\left(Q(s)\right)ds
     -\lambda\int_{0}^{t}H\left(Q(s)\right)G(t-s)ds
     +\int_0^{t}Q(t-s)dG(s).
  \end{align*}
  Applying Fubini's Theorem (c.f.\ Theorem~8.4 in \cite{Lang1983}) to
  the second to the last integral in the above, we have
  \begin{align*}
    \int_{0}^{t}H\left(Q(s)\right)G(t-s)ds
    &=\int_{0}^{t}\int_0^{t-s}H\left(Q(s)\right)dG(\tau)ds\\
    &=\int_{0}^{t}\int_0^{t-\tau}H\left(Q(s)\right)dsdG(\tau).
  \end{align*}
  So we obtain
  \begin{equation*}
    x(t)-\lambda\int_0^{t}H\left(Q(s)\right)ds
    =\zeta(t)+\int_0^t\left[Q(t-s)-\lambda\int_0^{t-s}
      H\left(Q(\tau)\right)d\tau\right]dG(s).
  \end{equation*}
  According to the above definition of $D(\cdot)$, we have
  \begin{equation}\label{eq:reg-map-aux}
    \left(x(t)\wedge 1\right)+D(t)
    =\zeta(t)+\int_0^t D(t-s)dG(s).
  \end{equation}  

  It now remains to use \eqref{eq:reg-map-aux} to show that $D(\cdot)$
  is non-increasing, i.e.\ for any $t,t'\in[0,T]$ with $t\le t'$, we
  have $D(t)\ge D(t')$. Since $G(0)<1$, there exists $a>0$ such that
  $G(a)<1$.  We first show that $D(\cdot)$ is non-increasing on the
  interval $[0,a]$. Let
  \begin{equation*}
    D^*=\sup_{\{(t,t')\in[0,a]\times[0,a]: t\le t'\}}D(t')-D(t).
  \end{equation*} 
  Since $D(\cdot)$ is \rcll{}, according to Theorem~6.2.2 in the
  supplement of \cite{Whitt2002}, it is bounded on the interval
  $[0,a]$.  Thus, $D^*$ is finite.  We will prove by contradiction
  that $D^*\le 0$, which shows that $D(\cdot)$ is non-increasing on
  $[0, a]$. Assume on the contrary that $D^*>0$.  Applying
  \eqref{eq:reg-map-aux}, we have
  \begin{align*}
    D(t')-D(t)&=(x(t)\wedge 1)-(x(t')\wedge 1)+\zeta(t')-\zeta(t)\\
    &\quad+\int_{0}^{t'}D(t'-s)dG(s)-\int_{0}^{t}D(t-s)dG(s)\\
    &=(x(t)\wedge 1)-(x(t')\wedge 1)+\zeta(t')-\zeta(t)\\
    &\quad+\int_{t}^{t'}D(t'-s)dG(s)+\int_{0}^{t}[D(t'-s)-D(t-s)]dG(s).
  \end{align*}
  It follows from \eqref{eq:reg-map} and \eqref{eq:def-D} that
  $D(0)=(\zeta(0)-1)^+$. This together with condition
  \eqref{eq:zeta-cond-appdix} implies that
  \begin{equation}
    \label{eq:zeta-cond-appdix-1}
    \zeta(t')-\zeta(t)=h(t')-h(t)+D(0)[G(t)-G(t')].
  \end{equation}
  So
  \begin{equation}
    \label{eq:b-delta}
    \begin{split}
      D(t')-D(t)&=(x(t)\wedge 1)-(x(t')\wedge 1)+h(t')-h(t)\\
    &\quad+\int_{t}^{t'}[D(t'-s)-D(0)]dG(s)
    +\int_{0}^{t}[D(t'-s)-D(t-s)]dG(s).
    \end{split}
  \end{equation}
  If $x(t')<1$, by \eqref{eq:def-D},
  \begin{align*}
    D(t')-D(t)=-\lambda\int_{t}^{t'}H\left(Q(s)\right)ds-Q(t),
  \end{align*}
  which is always non-positive; if $x(t')\ge 1$, then $(x(t)\wedge
  1)-(x(t')\wedge 1)\le 0$.  So it follows from \eqref{eq:b-delta}
  and $h(\cdot)$ being non-increasing that
  \begin{align*}
    D(t')-D(t)&\le\int_{t}^{t'}[D(t'-s)-D(0)]dG(s)
                      +\int_{0}^{t}[D(t'-s)-D(t-s)]dG(s)\\
                    &\le\int_0^{t'}D^*dG(s)=D^*G(t')\le D^*G(a),
  \end{align*}
  where the last inequality follows from the assumption that $D^*$ is
  non-negative.  Summarizing both cases of $x(t')$, we have
  \begin{equation*}
    D(t')-D(t)\le \max(0,D^*G(a))
  \end{equation*}
  for all $t,t'\in[0,a]>0$ with $t\le t'$.  Taking the supremum on
  both sides over the set $\{(t,t')\in[0,a]\times[0,a]: t\le t'\}$
  gives $D^*\ge F(a)D^*$. This implies that $[1-G(a)]D^*\le 0$. Since
  $G(a)<1$, it contradicts the assumption that $D^*>0$. So we must
  have $D^*\le 0$, this implies that $D(\cdot)$ is non-increasing on
  $[0,a]$.  We next extend this property to the interval $[0,T]$ using
  induction. Suppose we can show that $D(\cdot)$ is non-decreasing on
  the interval $[0,na]$ for some $n\in\N$.  Introduce
  $D_{na}(t)=D(na+t)$, $x_{na}(t)=x(na+t)$ and
  \begin{equation}\label{eq:zeta-a}
    \zeta_{na}(t)=\zeta(na+t)+\int_0^{na}D(na-s)dG(t+s).
  \end{equation}
  It is clear that the shifted functions satisfy
  \begin{equation}\label{eq:reg-map-aux--shifted}
    \left(x_{na}(t)\wedge 1\right)+D_{na}(t)
    =\zeta_{na}(t)+\int_0^t D_{na}(t-s)dG(s).    
  \end{equation}
  To show that $D(\cdot)$ is non-increasing on $[na,(n+1)a]$ is the
  same as to show that $D_{na}(\cdot)$ is non-increasing on
  $[0,a]$. For this purpose, it is enough to verify that
  $\zeta_{na}(\cdot)$ satisfy the condition
  \eqref{eq:zeta-cond-appdix-1}. Performing integration by parts on
  \eqref{eq:zeta-a} gives
  \begin{align*}
    \zeta_{na}(t)&=h(na+t)+(\zeta(0)-1)^+[1-G(na+t)]+\int_0^{na}D(na-s)dG(t+s)\\
    &=h(na+t)+(\zeta(0)-1)^+[1-G(na+t)]\\
    &\quad+D(0)G(na+t)-D(na)G(t)-\int_0^{na}G(t+s)dD(na-s).
  \end{align*}
  It follows from \eqref{eq:reg-map} and \eqref{eq:def-D} that
  $D(0)=(\zeta(0)-1)^+$, so we can write $\zeta_{na}(\cdot)$ as
  \begin{align*}
    \zeta_{na}(t)=h_{na}(t)+D_{na}(0)[1-G(t)],
  \end{align*}
  where
  $h_{na}(t)=h(na+t)+(\zeta(0)-1)^+-D_{na}(0)-\int_0^{na}G(t+s)dD(na-s)$.
  Since $G(\cdot)$ is non-decreasing and $D(\cdot)$ is non-increasing,
  the integral $-\int_0^{na}G(t+s)dD(na-s)$ is non-increasing as a
  function of $t$.  So we can conclude that $h_{na}(\cdot)$ is
  non-increasing, i.e.\ $\zeta_{na}(\cdot)$ satisfies condition
  \eqref{eq:zeta-cond-appdix-1}. Thus, we extend the non-increasing
  interval to $[0,(n+1)a]$. By induction, the function $D(\cdot)$ is
  non-increasing on the interval $[0,T]$.
\end{proof}

\section{Glivenko-Cantelli Estimates}

An important preliminary result is the following Glivenko-Cantelli
estimate.  It is used in Section~\ref{sec:fluid-appr-stoch}.  It is
convenient to state it as a general result, since the
Glivenko-Cantelli estimate requires weaker conditions and gives
stronger results than those in this paper.

For each $n$, let $\{u^n_i\}_{i\in\Z}$ be a sequence of i.i.d.\ random
variables with probability measure $\nu_F^n(\cdot)$, let
$\{u^n_i\}_{i\in\Z}$ be a sequence of i.i.d.\ random variables with
probability measure $\nu_G^n(\cdot)$. For any $n,m\in\Z$ and
$l\in\R_+$, define
\begin{align}\label{eq:def-gc-p}
  \fs\gc_F(m,l)&=\frac{1}{n}\sum_{i=m+1}^{m+\fl{nl}}\delta_{u^n_i},\\
  \label{eq:def-gc-S}
  \fs\gc_G(m,l)&=\frac{1}{n}\sum_{i=m+1}^{m+\fl{nl}}\delta_{v^n_i},\\
  \label{eq:def-gc}
  \fs\gc(m,l)&=\frac{1}{n}\sum_{i=m+1}^{m+\fl{nl}}\delta_{(u^n_i,v^n_i)},
\end{align}
where $\delta_x$ denotes the Dirac measure of point $x$ on $\R$ and
$\delta_{(x,y)}$ denotes the Dirac measure of point $(x,y)$ on
$\R\times\R$.
So $\fs\gc_F(m,l)$ and $\fs\gc_G(m,l)$ are measures on $\R$ and
$\fs\gc(m,l)$ is a measure on $\R\times\R$.

Denote $C_x=(x,\infty)$, for all $x\in\R$.  We define two classes of
testing functions by
\begin{align*}
\testfn&=\left\{1_{C_x}(\cdot): x\in \R\right\},\\
\testfn_2&=\left\{1_{C_x\times C_y}(\cdot, \cdot): x, y\in \R\right\}.
\end{align*}
It is clear that $\testfn$ is a set of functions on $\R$ and
$\testfn_2$ is a set of functions on $\R\times\R$.
Define an envelop function for $\testfn$ as follows.
Since $\nu_F^n\to\nu_F$, by Skorohod representation theorem, there exists
random variables $X^n$ (with law $\nu_F^n$) and $X$ (with law $\nu_F$),
such that $X^n\to X$ almost surely as $r\to\infty$.  Thus there exists
a random variable $X^*$ such that almost surely,
\[
X^*=\sup_{r}X^n.
\]
Let $\nu_F^*$ be the law of $X^*$. Since $L_2(\nu_F^*)$ (the space of
square integrable functions with respect to the measure $\nu_F^*$)
contains continuous unbounded functions, there exists a continuous
unbounded function $f_{\nu_F}:\R_+\to\R$ that is increasing, satisfies
$f_{\nu_F}\ge 1$ and $\inn{f_{\nu_F}^2}{\nu_F}<\infty$.  Similarly,
based on the weak convergence $\nu_G^n\to\nu_G$, we can construct a
function $f_{\nu_G}$ that is increasing, satisfies $f_{\nu_G}\ge 1$
and $\inn{f_{\nu_G}^2}{\nu_G}<\infty$. Now, define function $\bar
f:\R_+\to\R$ by $\bar f(x)=\min\left(f_{\nu_F}(x),f_{\nu_G}(x)\right)$
and function $\bar f_2:\R_+\times\R_+\to\R$ by $\bar
f_2(x,y)=\min\left(f_{\nu_F}(x),f_{\nu_G}(y)\right)$ for all
$x,y\in\R_+$.  Note that we have to following properties,
\begin{align}
&\bar f \textrm{ is increasing and unbounded},\\
&f\le \bar f \textrm{ for all }f\in\testfn,\\
&f\le \bar f_2\textrm{ for all }f\in\testfn_2.
\end{align}
So we call $\bar f$ and $\bar f_2$ the envelop function for $\testfn$
and $\testfn_2$ respectively. Finally, let $\bar\testfn=\{\bar
f\}\cup\testfn$ and $\bar\testfn_2=\{\bar f_2\}\cup\testfn_2$.

\begin{lem}\label{lem:glivenko-cantelli}
  Assume that
  \[\nu^n_F\to\nu_F,\quad\nu^n_G\to\nu_G \textrm{ as }n\to\infty.\]
  Fix constants $M,L>0$.  For all $\epsilon,\eta>0$,
  \begin{align*}
    &\limsup_{n\to\infty}\prr{\max_{-nM< m<
        nM}\sup_{l\in[0,L]}\sup_{f\in\bar\testfn}
      \Big|\inn{f}{\fs\gc_F(m,l)}-l\inn{f}{\nu_F^n}\Big|>\epsilon}<\eta,\\
    &\limsup_{n\to\infty}\prr{\max_{-nM< m<
        nM}\sup_{l\in[0,L]}\sup_{f\in\bar\testfn}
      \Big|\inn{f}{\fs\gc_G(m,l)}-l\inn{f}{\nu_G^n}\Big|>\epsilon}<\eta,\\
    &\limsup_{n\to\infty}\prr{\max_{-nM< m<
        nM}\sup_{l\in[0,L]}\sup_{f\in\bar\testfn_2}
      \Big|\inn{f}{\fs\gc(m,l)}-l\inn{f}{(\nu_F^n,\nu_G^n)}\Big|>\epsilon}<\eta.
  \end{align*}
\end{lem}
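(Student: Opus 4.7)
My plan is to prove the three estimates by a common two-stage template: for each test-function family I first establish concentration for a single pair $(m,l)$, then upgrade to the stated supremum via a union bound over the $O(n)$ starting indices $m$ and a discretization of the continuous parameter $l$. I would treat the estimate for $\fs\gc_F$ in detail; the estimate for $\fs\gc_G$ follows by interchanging $u^n_i$ with $v^n_i$ and $\nu^n_F$ with $\nu^n_G$, and the estimate for the joint measure $\fs\gc$ differs only in that the two-dimensional class $\testfn_2$ of indicators of rectangles replaces $\testfn$ throughout.

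For the indicator subclass $\testfn=\{1_{C_x}:x\in\R\}$, I would invoke the Dvoretzky--Kiefer--Wolfowitz inequality. For any fixed $m$ and any integer $k\in\{0,1,\ldots,\fl{nL}\}$, rescaling the DKW bound gives
\[
\prr{\sup_{x\in\R}\Big|\frac{1}{n}\sum_{i=m+1}^{m+k}\big(1_{\{u^n_i>x\}}-\nu^n_F(C_x)\big)\Big|>\epsilon}\le 2\exp\bigl(-2n\epsilon^2/L\bigr),
\]
uniformly in $m$. A union bound over the $O(n^2)$ pairs $(m,k)$ with $-nM<m<nM$ and $0\le k\le \fl{nL}$ still produces a probability decaying to $0$, since the exponential defeats the polynomial factor. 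For continuous $l$, interpolation between consecutive values $l=k/n$ costs only $O(1/n)$ in the sup, which is absorbed into $\epsilon$ for large $n$. That disposes of the indicator contribution to the sup over $\bar\testfn$.

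For the envelope $\bar f$, which is unbounded, DKW does not apply directly, so I would truncate: write $\bar f=(\bar f\wedge A)+(\bar f-A)^+$. The bounded part $\bar f\wedge A$ is handled by Hoeffding's inequality together with the same union bound and $1/n$-discretization (the interpolation cost becomes $A/n$, still absorbed for fixed $A$). The tail part is controlled via the defining property of $\bar f$, namely $\inn{\bar f^2}{\nu_F^*}<\infty$ under the Skorohod-dominating law $\nu_F^*$, together with the weak convergence $\nu^n_F\to\nu_F$ and the stochastic domination of $\nu^n_F$ by $\nu_F^*$ that is built into the construction of $\bar f$. These together yield $\inn{(\bar f-A)^+}{\nu^n_F}\to 0$ as $A\to\infty$ uniformly in $n$. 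Markov's inequality applied to the nonnegative sums $\sum_{i=m+1}^{m+\fl{nL}}(\bar f-A)^+(u^n_i)$ (whose total dominates the tail contribution for every $l\in[0,L]$) then gives the required uniform control over $(m,l)$ with high probability.

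The main obstacle I expect is exactly this envelope step: handling an unbounded test function uniformly over $O(n)$ starting indices and continuously over $l\in[0,L]$ without losing the exponential concentration. The argument hinges on the moment bound $\inn{\bar f^2}{\nu_F^*}<\infty$ built into the construction of $\bar f$, which converts the problem from one demanding exponential concentration in the tails into one solvable by truncation plus an $L^1$-Markov estimate. Once this template is established for $\fs\gc_F$, the statement for $\fs\gc_G$ is the same with $v^n_i,\nu^n_G$ in place of $u^n_i,\nu^n_F$, and the joint statement for $\fs\gc$ follows by replacing DKW with a Vapnik--Chervonenkis uniform deviation inequality for rectangles (the class $\testfn_2$ has finite VC dimension), the envelope $\bar f_2$ being handled by the same truncation argument.
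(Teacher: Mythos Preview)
The paper does not actually give a proof of this lemma: it simply states that the first two inequalities follow exactly as in Lemma~B.1 of \cite{ZDZ2009} and the third as in Lemma~5.1 of \cite{GRZ2008}, and omits the details. Your self-contained template (DKW for the half-line indicators, VC for rectangles, truncation plus moment control for the envelope, union bound over $O(n^2)$ pairs $(m,k)$ with $1/n$-interpolation in $l$) is a perfectly reasonable reconstruction along the lines of those references.

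There is one genuine slip in the envelope step. After truncation you propose to apply Markov's inequality to each of the $m$-indexed sums $\sum_{i=m+1}^{m+\fl{nL}}(\bar f-A)^+(u^n_i)$ and then claim uniform control over $(m,l)$. The domination over $l$ is fine, but you still have $O(n)$ starting indices $m$, and Markov only gives a bound of order $L\inn{(\bar f-A)^+}{\nu^n_F}/\epsilon'$, which does not decay in $n$; a union bound over $m$ therefore loses an $O(n)$ factor that the tail estimate cannot absorb. The fix is easy and is probably what you intended: since $(\bar f-A)^+\ge 0$, every partial sum with $-nM<m<nM$ and $l\in[0,L]$ is dominated by the single sum
\[
\frac{1}{n}\sum_{-\fl{nM}< i\le \fl{n(M+L)}}(\bar f-A)^+(u^n_i),
\]
whose expectation is at most $(2M+L)\inn{(\bar f-A)^+}{\nu^n_F}$. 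One application of Markov to this single random variable controls all $(m,l)$ simultaneously, and the stochastic domination $\nu^n_F\preceq\nu^*_F$ together with $\inn{\bar f^2}{\nu^*_F}<\infty$ makes the bound small for large $A$, uniformly in $n$. With that adjustment your argument goes through.
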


This kind of results have been widely used in the study of measure
valued processes, see \cite{GromollKurk2007,GRZ2008,ZDZ2009}.  The
proof of the first two inequalities in the above lemma follows exactly
the same way as the one for Lemma~$B.1$ in \cite{ZDZ2009}, and the
proof of the third inequality in the above lemma follows exactly the
same as the one for Lemma~$5.1$ in \cite{GRZ2008}.  We omit the proof
for brevity.  By the same reasoning as for
Lemma~\ref{lem:epsilon-function}, there exists a function
$\epsilon_{\text{GC}}(\cdot)$, which vanishes at infinity such that
the $\epsilon$ and $\eta$ in the above lemma can be replaced by the
function $\epsilon_{\text{GC}}(n)$ for each index $n$.  Based on this,
we construct the following event,
\begin{equation}\label{def:omega-GC}
  \begin{split}
    \Omega^n_{\text{GC}}(M,L) &=\Big\{\max_{-nM< m<
      nM}\sup_{l\in[0,L]}\sup_{f\in\bar\testfn}
    \Big|\inn{f}{\fs\gc_F(m,l)}-l\inn{f}{\nu_F^n}\Big|\le\epsilon_{\text{GC}}(n)\Big\}\\
    &\quad\cap\Big\{\max_{-nM< m<
      nM}\sup_{l\in[0,L]}\sup_{f\in\bar\testfn}
    \Big|\inn{f}{\fs\gc_G(m,l)}-l\inn{f}{\nu_G^n}\Big|\le\epsilon_{\text{GC}}(n)\Big\}\\
    &\quad\cap\Big\{\max_{-nM< m<
      nM}\sup_{l\in[0,L]}\sup_{f\in\bar\testfn_2}
    \Big|\inn{f}{\fs\gc(m,l)}-l\inn{f}{(\nu_F^n,\nu_G^n)}\Big|\le\epsilon_{\text{GC}}(n)\Big\}.
  \end{split}
\end{equation}
It is clear that for any fixed $M,L>0$,
\begin{equation}\label{eq:omega-GC}
  \lim_{n\to\infty}\prr{\Omega^n_{\text{GC}}(M,L)}=1.
\end{equation}
Intuitively, on the event $\Omega^n_{\text{GC}}(M,L)$ (whose
probability goes to 1 as $n\to\infty$ for any fixed constants $M, L$),
the measures $\fs\gc_F(m,l)$, $\fs\gc_G(m,l)$ and $\fs\gc(m,l)$ are
very ``close'' to $l\nu_F^n$, $l\nu_G^n$ and $l(\nu_F^n,\nu_G^n)$,
respectively.

\section{An Extension of Skorohod Representation Theorem}

In this section, we present a slight extension,
Lemma~\ref{lem:ext-sko} below, of the Skorohod Representation Theorem
(c.f.\ Theorem~3.2.2 in \cite{Whitt2002}). The proof of
Lemma~\ref{lem:ext-sko} is built on the proof of
Theorem~3.2.2 provided in the supplement of \cite{Whitt2002}, with slight extension to deal with the product of two matric spaces.

Let $(\mathbf E_1,\gd_1)$ and $(\mathbf E_2,\gd_2)$ be two complete
and separable metric spaces. Let $(\mathbf E_1\times \mathbf E_2,\pi)$
denote the product space of them, with the product metric $\gd$
obtained by the maximum metric.

\begin{lem}\label{lem:ext-sko}
  Consider a sequence of random variables $\{(X_n,Y_n),n\ge 1\}$ in
  the product space $\mathbf E_1\times \mathbf E_2$.  If $X_n\dto X$,
  then there exists other random elements of $\mathbf E_1\times
  \mathbf E_2$,  $\{(\tilde X_n,\tilde Y_n),n\ge 1\}$, and $\tilde X$,
  defined on a common underlying probability space, such that
  \begin{equation*}
    (\tilde X_n,\tilde Y_n)\stackrel{d}{=}(X_n,Y_n), n\ge 1,\quad
    \tilde X\stackrel{d}{=}X
  \end{equation*}
  and almost surely,
  \begin{equation*}
    \tilde X_n\to \tilde X\quad\textrm{as }n\to\infty.
  \end{equation*}
\end{lem}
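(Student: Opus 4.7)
The plan is to reduce to the classical Skorohod representation theorem on the single sequence $X_n\dto X$ and then attach the second coordinates $\tilde Y_n$ by sampling from the appropriate regular conditional laws. Since $(\mathbf E_1,\gd_1)$ and $(\mathbf E_2,\gd_2)$ are assumed to be complete and separable, their product is Polish, so regular conditional distributions exist, which is exactly what makes such a gluing construction possible.

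First, I would apply the classical Skorohod representation theorem (Theorem~3.2.2 in \cite{Whitt2002}) to the sequence $X_n\dto X$ living in the Polish space $(\mathbf E_1,\gd_1)$. This produces a probability space $(\Omega_0,\mathcal F_0,\mathbb P_0)$ carrying random variables $\tilde X_n$ and $\tilde X$ with $\tilde X_n\stackrel{d}{=}X_n$, $\tilde X\stackrel{d}{=}X$, and $\tilde X_n\to\tilde X$ almost surely. Next, for each $n$, invoke disintegration on $\mathbf E_1\times\mathbf E_2$: there exists a regular conditional distribution $K_n:\mathbf E_1\times\mathscr B(\mathbf E_2)\to[0,1]$ such that
\[
\mathbb P\bigl((X_n,Y_n)\in A\times B\bigr)=\int_A K_n(x,B)\,\mathbb P(X_n\in dx)
\quad\text{for all }A\in\mathscr B(\mathbf E_1),\,B\in\mathscr B(\mathbf E_2).
\]
Because $\mathbf E_2$ is Polish, every probability kernel into $\mathbf E_2$ can be realized through a uniform random variable: there is a Borel-measurable map $\phi_n:\mathbf E_1\times[0,1]\to\mathbf E_2$ such that $\phi_n(x,U)$ has law $K_n(x,\cdot)$ whenever $U$ is uniform on $[0,1]$.

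Then I would enlarge the underlying space, replacing $(\Omega_0,\mathcal F_0,\mathbb P_0)$ by its product with $([0,1]^\N,\mathscr B^\N,\mathrm{Leb}^\N)$, thereby introducing an independent sequence $\{U_n\}_{n\ge 1}$ of uniform random variables that are independent of $\{\tilde X_n\}$ and $\tilde X$. On this enlarged space define $\tilde Y_n:=\phi_n(\tilde X_n,U_n)$. The almost sure convergence $\tilde X_n\to\tilde X$ is preserved because the enlargement only attaches extra independent randomness. For the joint-law identity, any bounded Borel $f:\mathbf E_1\times\mathbf E_2\to\R$ satisfies
\[
\E[f(\tilde X_n,\tilde Y_n)]
=\int_{\mathbf E_1}\!\!\int_0^1 f\bigl(x,\phi_n(x,u)\bigr)\,du\,\mathbb P(\tilde X_n\in dx)
=\int_{\mathbf E_1}\!\!\int_{\mathbf E_2} f(x,y)\,K_n(x,dy)\,\mathbb P(X_n\in dx)
=\E[f(X_n,Y_n)],
\]
so $(\tilde X_n,\tilde Y_n)\stackrel{d}{=}(X_n,Y_n)$, as required.

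The only genuinely non-trivial step is the construction of the Borel maps $\phi_n$ representing $K_n(x,\cdot)$; this is the standard ``kernel-as-function-of-a-uniform'' lemma for Polish targets, usually proved by transferring through a Borel isomorphism between $\mathbf E_2$ and a Borel subset of $[0,1]$ and using inverse-CDFs. Everything else is bookkeeping about extending probability spaces by independent factors and tracking joint distributions through conditioning, and requires no probabilistic input beyond the classical Skorohod theorem already quoted.
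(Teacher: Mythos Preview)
Your argument is correct and takes a genuinely different route from the paper. The paper proceeds by adapting the explicit nested-partition construction from Whitt's proof of the classical Skorohod theorem: it builds nested countable partitions $\{E^1_{i_1,\ldots,i_k}\}$ of $\mathbf E_1$ and $\{E^2_{i'_1,\ldots,i'_{k'}}\}$ of $\mathbf E_2$, then lays out nested subintervals of $[0,1)$ whose lengths match the joint probabilities $\mathbb P^n(E^1_{i_1,\ldots,i_k}\times E^2_{i'_1,\ldots,i'_{k'}})$, and defines $(\tilde X_n,\tilde Y_n)$ as limits of step functions on $[0,1)$ with Lebesgue measure. The $\tilde X_n\to\tilde X$ convergence is then read off from the convergence of the interval endpoints, exactly as in the one-space case. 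In contrast, you treat the classical Skorohod theorem as a black box for the first coordinate and then graft on the second coordinate via regular conditional distributions and the kernel-as-function-of-a-uniform lemma. Your approach is more modular and conceptually cleaner, and it makes transparent why only $X_n\dto X$ is needed (the $Y_n$ are carried along passively by conditioning); the price is that it imports the disintegration theorem and the Borel-isomorphism/inverse-CDF representation of kernels, which the paper avoids by staying entirely inside the elementary partition machinery. Both yield the same conclusion with the same hypotheses.
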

\begin{proof}
  In order to present the proof, we first need some preliminaries.  A
  nested family of countably partitions of a set $A$ is a collection
  of subsets $A_{i_1, \ldots, i_k}$ indexed by $k$-tuples of positive
  integers such that $\{A_i:i\ge 1\}$ is a partition of $A$ and
  $\{A_{i_1, \ldots, i_{k+1}}:i_{k+1}\ge 1\}$ is a partition of
  $A_{i_1, \ldots, i_k}$ for all $k\ge 1$ and $(i_1,\ldots, i_k)\in
  \N_+^{k}$.  Let $\mathbb P_1$ denote the probability measure on the
  space where $X$ lives on. Since the space $(\mathbf E_1,\gd_1)$ is
  separable, according to Lemma~1.9 in the supplement of
  \cite{Whitt2002}, there exists a nested family of countably
  partitions $\{E^1_{i_1,\ldots,i_k}\}$ of $(\mathbf E_1,\pi_1)$ that satisfies
  \begin{align}
    &\text{rad}(E^1_{i_1,\ldots,i_k})<2^{-k},\label{ineq:rad-nest1}\\
    &\mathbb P_1(\partial E^1_{i_1,\ldots,i_k})=0,\label{eq:prob-boundary}
  \end{align}
  where $\text{rad}(A)$ denotes the radius of the set $A$ in a metric space,
  and $\partial(A)$ denote the boundary of the set $A$. Since the
  space $(\mathbf E_2,\gd_2)$ is separable, by the same lemma, there
  exists a nested sequence of countably partitions
  $\{E^2_{i'_1,\ldots,i'_{k'}}\}$ of $(\mathbf E_2,\pi_2)$ that satisfies
  \begin{align}\label{ineq:rad-nest2}
    &\text{rad}(E^2_{i'_1,\ldots,i'_{k'}})<2^{-k'}.
  \end{align}
  Note that for space $(\mathbf E_2,\pi_2)$, we only need a
  weaker version of Lemma~1.9 in the supplement of
  \cite{Whitt2002}.

  The first step is to use this nested sequence of countably
  partitions to construct random variables $\{(\tilde X_n,\tilde
  Y_n),n\ge 1\}$ with the same distribution for each $n$.  For $n\ge
  1$, we first construct subintervals $I^n_{i_1,\ldots,i_k}\subseteq
  [0,1)$ corresponding to the marginal probability of $X_n$. Let
  $I^n_1=[0,\mathbb{P}^n(E^1_1\times \mathbf E_2))$ and
  \begin{equation*}
    I^n_i=\Big[\sum_{j=1}^{i-1}\mathbb{P}^n(E^1_j\times \mathbf E_2),
    \sum_{j=1}^{i}\mathbb{P}^n(E^1_j\times \mathbf E_2)\Big),
    \quad i> 1,
  \end{equation*}
  where $\mathbb{P}^n$ is the probability measure on the space where
  $(X_n,Y_n)$ lives.  Let $\{I^n_{i_1,\ldots,i_{k+1}}:
  i_{k+1}\ge1\}$ be a countable partition of subintervals of
  $I^n_{i_1,\ldots,i_k}$. If $I^n_{i_1,\ldots,i_k}=[a_n,b_n)$, then
  \begin{equation*}
    I^n_{i_1,\ldots,i_{k+1}}=
    \Big[a_n+\sum_{j=1}^{i_{k+1}-1}
      \mathbb{P}^n(E^1_{i_1,\ldots,i_k,j}\times \mathbf E_2),
    a_n+\sum_{j=1}^{i_{k+1}}
      \mathbb{P}^n(E^1_{i_1,\ldots,i_k,j}\times \mathbf E_2)\Big).
  \end{equation*}
  The length of each subinterval $I^n_{i_1,\ldots,i_k}$ is the
  probability $\mathbb{P}^n(E^1_{i_1,\ldots,i_k}\times \mathbf E_2)$.
  We then construct further
  subintervals $I^n_{i_1,\ldots,i_k;i'_1,\ldots,i'_{k'}}\subseteq
  I^n_{i_1,\ldots,i_k}$ corresponding to $(X_n,Y_n)$. If $I^n_{i_1,\ldots,i_k}=[a_n,b_n)$, then let $I^n_{i_1,\ldots,i_k;1}=[a_n,a_n+\mathbb{P}^n(E^1_{i_1,\ldots,i_k}\times E^2_1))$ and
  \begin{equation*}
    I^n_{i_1,\ldots,i_k;i'}=
    \Big[a_n+\sum_{j'=1}^{i'-1}\mathbb{P}^n(E^1_{i_1,\ldots,i_k}\times E^2_{j'}),
    a_n+\sum_{j'=1}^{i'}\mathbb{P}^n(E^1_{i_1,\ldots,i_k}\times E^2_{j'})\Big),
    \quad i'> 1.
  \end{equation*}
  Let $\{I^n_{i_1,\ldots,i_k;i'_1,\ldots,i'_{k'+1}}:i'_{k'+1}\ge1\}$ be
  countable partition of
  $I^n_{i_1,\ldots,i_k;i'_1,\ldots,i'_{k'}}$. If
  $I^n_{i_1,\ldots,i_k;i'_1,\ldots,i'_{k'}}=[a_n,b_n)$, then
  \begin{equation*}
    \begin{split}
    &\quad I^n_{i_1,\ldots,i_k;i'_1,\ldots,i'_{k'+1}}\\
    &=\Big[a_n+\sum_{j'=1}^{i'_{k'+1}-1}
    \mathbb{P}^n(E^1_{i_1,\ldots,i_k}\times E^2_{i'_1,\ldots,i'_k,j'}),
    a_n+\sum_{j'=1}^{i'_{k'+1}}
    \mathbb{P}^n(E^1_{i_1,\ldots,i_k}\times E^2_{i'_1,\ldots,i'_k,j'})\Big).
    \end{split}
  \end{equation*}
  The length of each subinterval
  $I^n_{i_1,\ldots,i_k;i'_1,\ldots,i'_{k'}}$ is the probability
  $\mathbb{P}^n(E^1_{i_1,\ldots,i_k}\times
  E^2_{i'_1,\ldots,i'_{k'}})$. Now from each nonempty subset
  $E^1_{i_1,\ldots,i_k}\times E^2_{i'_1,\ldots,i'_k}$ we choose one
  point $(x_{i_1,\ldots,i_k},y_{i'_1,\ldots,i'_k})$. For each $n\ge 1$
  and $k\ge 1$, we define functions $(x^k_n,y^k_n):[0,1)\to\mathbf
  E_1\times\mathbf E_2$ by letting $x^k_n(w)=x_{i_1,\ldots,i_k}$ and
  $y^k_n(w)=y_{i'_1,\ldots,i'_k}$ for $\omega\in
  I^n_{i_1,\ldots,i_k;i'_1,\ldots,i'_k}$.  By the nested partition
  property and inequalities \ref{ineq:rad-nest1} and
  \ref{ineq:rad-nest2},
  \begin{equation*}
    \pi\big((x^k_n(\omega),x^k_n(\omega)),
    (x^{k+j}_n(\omega),x^{k+j}_n(\omega))\big)<2^{-k}
    \quad\textrm{for all }j,k,n
  \end{equation*}
  and $\omega\in[0,1)$. Since $(\mathbf E_1\times \mathbf E_2,\pi)$ is
  a complete metric space, the above implies that there is
  $(x_n(\omega),y_n(\omega))\in\mathbf E_1\times \mathbf E_2$ such
  that
  \begin{equation*}
    \pi\big((x^k_n(\omega),x^k_n(\omega)), (x_n(\omega),x_n(\omega))\big)\to0
    \quad\textrm{as }k\to\infty.
  \end{equation*}
  We let $(\tilde X_n,\tilde Y_n)=(x_n,y_n)$ on $[0,1)$ for $n\ge0$.

  The next step is to construct $\tilde X$ and show that $\tilde
  X_n\to\tilde X$ almost surely. For each $n\ge1$, let $\mathbb P^n_1$
  denote the marginal probability of $X^n$. It is clear that
  $I^n_{i_1,\ldots,i_k}$ is the probability
  $\mathbb{P}^n_1(E^1_{i_1,\ldots,i_k})$.  By
  \eqref{eq:prob-boundary}, we have that
  $\mathbb{P}^n_1(E^1_{i_1,\ldots,i_k})\to\mathbb{P}_1(E^1_{i_1,\ldots,i_k})$,
  as $n\to\infty$. Consequently, the length of the interval
  $I^n_{i_1,\ldots,i_k}$ converges to the length of the interval
  $I_{i_1,\ldots,i_k}$, which is defined in a similar way as for
  $I^n_{i_1,\ldots,i_k}$ by letting
  \begin{equation*}
    I_{i_1,\ldots,i_{k+1}}=
    \Big[a_n+\sum_{j=1}^{i_{k+1}-1}\mathbb{P}_1(E_{i_1,\ldots,i_k,j}),
    a_n+\sum_{j=1}^{i_{k+1}}\mathbb{P}_1(E_{i_1,\ldots,i_k,j})\Big),
  \end{equation*}
  if $I_{i_1,\ldots,i_k}=[a_n,b_n)$.  Now from each nonempty subset
  $E_{i_1,\ldots,i_k}$ we choose one point $x_{i_1,\ldots,i_k}$. For
  each $k\ge 1$, we define functions $x^k:[0,1)\to\mathbf E_1$ by
  letting $x^k(\omega)=x_{i_1,\ldots,i_k}$ for $\omega\in
  I^n_{i_1,\ldots,i_k}$.  By the nested partition property and
  inequalities \ref{ineq:rad-nest1},
  \begin{equation*}
    \pi_1(x^k(\omega), x^{k+j}(\omega))<2^{-k}
    \quad\textrm{for all }j,k
  \end{equation*}
  and $\omega\in[0,1)$. Since $(\mathbf E_1,\pi_1)$ is a complete
  metric space, the above implies that there is $x(\omega)\in\mathbf
  E_1$ such that
  \begin{equation*}
    \pi_1(x^k(\omega), x(\omega))\to0\quad\textrm{as }k\to\infty.
  \end{equation*}
  We let $\tilde X=x$ on $[0,1)$.  Since
  \begin{align*}
    \pi_1(\tilde X_n(\omega),\tilde X(\omega))
    &\le\pi_1(\tilde X_n(\omega),\tilde X^k_n(\omega))
    +\pi_1(\tilde X^k_n(\omega),\tilde X^k(\omega))
    +\pi_1(\tilde X^k(\omega),\tilde X(\omega))\\
    &\le 3\times2^{-k},
  \end{align*}
  for all $\omega$ in the interior of $I_{i_1,\ldots,i_k}$,
  \begin{equation*}
    \lim_{n\to\infty}\pi_1(\tilde X_n(\omega),\tilde X(\omega))\le 3\times2^{-k}.
  \end{equation*}
  Since $k$ is arbitrary, we must have $\tilde X_n(\omega)\to\tilde
  X(\omega)$ as $n\to\infty$ for all but at most countably many
  $\omega\in[0,1)$.

  It remains to show that $(\tilde X_n,\tilde Y_n)$ has the probability
  laws $\mathbb P^n$. Let $\tilde{\mathbb P}$ denote the Lebesque
  measure on $[0,1)$. It suffices to show that $\tilde{\mathbb
  P}((\tilde X_n,\tilde Y_n)\in A)=\mathbb P^n(A)$ for each $A$ such
  that $\mathbb P^n(\partial A)=0$. Let $A$ be such a set.  Let $A^k$
  be the union of the sets $E^1_{i_1,\ldots,i_k}\times
  E^2_{i'_1,\ldots,i'_k}$ such that $E^1_{i_1,\ldots,i_k}\times
  E^2_{i'_1,\ldots,i'_k}\subseteq A$ and let ${A'}^k$
  be the union of the sets $E^1_{i_1,\ldots,i_k}\times
  E^2_{i'_1,\ldots,i'_k}$ such that $E^1_{i_1,\ldots,i_k}\times
  E^2_{i'_1,\ldots,i'_k}\cap A\ne\emptyset$.  Then $A^k\subseteq
  A\subseteq {A'}^k$ and, by the construction above,
  \begin{equation*}
    \tilde{\mathbb P}((\tilde X_n,\tilde Y_n)\in A^k)=\mathbb P^n(A^k)
    \textrm{ and }
    \tilde{\mathbb P}((\tilde X_n,\tilde Y_n)\in {A'}^k)=\mathbb P^n({A'}^k)
  \end{equation*}
  Now let $C^k=\{s\in\mathbf E_1\times\mathbf E_2: \pi(s,\partial A)\le 2^{-k}\}$. Then
  ${A'}^k-A^k\downarrow\partial A$ as $k\to\infty$. Since $\mathbb
  P^n(\partial A)=0$ by assumption, $\mathbb P^n(C^k)\downarrow0$ as
  $k\to\infty$.  Hence
  \begin{equation*}
    \tilde{\mathbb P}((\tilde X_n,\tilde Y_n)\in A)
    =\lim_{k\to\infty}\tilde{\mathbb P}((\tilde X_n,\tilde Y_n)\in A^k)
    =\lim_{k\to\infty}\mathbb P^n(A^k)
    =\mathbb P^n(A).
  \end{equation*}
  Following the same way, we can show that $\tilde X$ has probability law $\mathbb P_1$.
\end{proof}

\end{document}